\begin{document}
\title[Prismatic Kunz's Theorem]{Prismatic Kunz's Theorem}

\author[R. Ishizuka]{Ryo Ishizuka}
\address{Department of Mathematics, Institute of Science Tokyo, 2-12-1 Ookayama, Meguro, Tokyo 152-8551}
\email{ishizuka.r.ac@m.titech.ac.jp}

\author[K. Nakazato]{Kei Nakazato}
\address{Proxima Technology Inc., 8-5-7 Minamisenju, Arakawa, Tokyo 116-0003}
\email{keinakazato31@gmail.com}

\thanks{2020 {\em Mathematics Subject Classification\/}: 14G45, 14F30, 13A35}

\keywords{prisms, prismatic cohomology, Kunz's theorem, animated rings}

\begin{abstract}
    In this paper, we prove ``prismatic Kunz's theorem'' which states that a complete Noetherian local ring \(R\) of residue characteristic \(p\) is a regular local ring if and only if the Frobenius lift on a prismatic complex of (a derived enhancement of) \(R\) over a specific prism \((A, I)\) is faithfully flat.
    This generalizes classical Kunz's theorem from the perspective of extending the ``Frobenius map'' to mixed characteristic rings.
    Our approach involves studying the deformation problem of the ``regularity'' of prisms and demonstrating the faithful flatness of the structure map of the prismatic complex.
\end{abstract}

\maketitle

\setcounter{tocdepth}{1}
\tableofcontents
\section{Introduction} \label{Introduction}

Let \(p\) be a prime number.
In this paper, we assume that any ring is a (commutative) \(\setZ_{(p)}\)-algebra.

What is the ``\emph{Frobenius map}'' on local rings of residue characteristic \(p\)?
In positive characteristic \(p\), the ``\emph{Frobenius map}'' is exactly the Frobenius map \(F \colon x \mapsto x^p\).
The existence of this map is essential in the theory of positive characteristic rings, for example, \(F\)-singularities and tight closure theory.
In particular, the following Kunz's theorem is a monumental result that marks the beginning of the idea.

\begin{theorem}[{Kunz's theorem; \cite{kunz1969Characterizations}}] \label{KunzFrob}
    Let \(R\) be a Noetherian ring of characteristic \(p\).
    Then \(R\) is regular if and only if the Frobenius map \(\map{F}{R}{F_*R}\) is faithfully flat if and only if the canonical map \(R \to R_{\perf}\) is faithfully flat, where \(R_{\perf}\) is the perfect closure \(R_{\perf} \defeq \colim_F R\) of \(R\).
\end{theorem}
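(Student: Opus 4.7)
The plan is to reduce to the case where $R$ is a complete Noetherian local ring $(R, \mathfrak{m}, k)$ of characteristic $p$: each of the three conditions (regularity, $F$ faithfully flat, $R \to R_{\perf}$ faithfully flat) passes between $R$ and its localizations at primes, and between a local Noetherian ring and its $\mathfrak{m}$-adic completion via standard faithfully flat descent. After this reduction, I would first establish the equivalence ``$R$ regular $\iff F$ faithfully flat'' and then deduce the equivalence with faithful flatness of $R \to R_{\perf}$ as a formal consequence.

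For ``$R$ regular $\Rightarrow F$ faithfully flat'', the Cohen structure theorem identifies $R$ with a power series ring $k[[x_1,\dots,x_d]]$. Choosing a $p$-basis $\{e_\lambda\}_\lambda$ of $k$ over $k^p$ and forming the products $e_\lambda \cdot x_1^{i_1}\cdots x_d^{i_d}$ with $0 \le i_j < p$ gives an explicit free $R$-basis of $F_*R$, which makes $F$ even free.

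The converse ``$F$ faithfully flat $\Rightarrow R$ regular'' is the crux of the theorem, and I expect it to be the main obstacle. I would follow Kunz's original length-theoretic strategy. Flatness of $F$ propagates to flatness of each iterate $F^n$, and a flat base change identification pins down $\ell_R(R/\mathfrak{m}^{[p^n]})$ precisely in terms of $p^{n\dim R}$, where $\mathfrak{m}^{[p^n]}$ denotes the ideal generated by $p^n$-th powers. On the other hand, a Nakayama-style argument gives an upper bound on this length by a polynomial in $p^n$ of degree equal to the embedding dimension $e = \dim_k(\mathfrak{m}/\mathfrak{m}^2)$, forcing $\dim R \ge e$. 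Combined with the automatic inequality $e \ge \dim R$, this yields equality and hence regularity. The delicate step is arranging the asymptotic comparison so that the Frobenius-flatness hypothesis determines the leading behavior exactly rather than merely bounding it.

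Finally, for the equivalence with $R \to R_{\perf}$ faithfully flat: one direction is immediate since $R_{\perf} = \colim_F R$ is a filtered colimit of copies of $R$ with transition $F$, and filtered colimits of faithfully flat algebras are faithfully flat. For the converse, since $R_{\perf}$ is perfect the Frobenius on $R_{\perf}$ is a bijection; base changing $F\colon R \to R$ along the faithfully flat map $R \to R_{\perf}$ then produces, after identifying the Frobenius twist of $R_{\perf}$ with $R_{\perf}$ itself, an isomorphism. Flatness of $F$ descends through the faithfully flat map $R \to R_{\perf}$, closing the loop with the preceding equivalence.
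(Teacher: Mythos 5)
The paper itself gives no proof of this statement — it is quoted directly from Kunz's 1969 paper — so your sketch has to stand on its own against the classical arguments, and as written it has two genuine gaps. The first is in the direction ``regular \(\Rightarrow\) \(F\) faithfully flat''. The theorem carries no \(F\)-finiteness hypothesis, and when \([k:k^p]=\infty\) your claimed basis of \(F_*R\) over \(R=k[[x_1,\dots,x_d]]\) is not even a generating set: a series such as \(\sum_{n\ge 1} e_n x_1^{np}\), whose coefficients run through infinitely many distinct \(p\)-basis monomials, is not a finite \(R\)-linear combination of the elements \(e_\lambda x^{i}\), so the asserted freeness fails (the argument is fine only in the \(F\)-finite case). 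Relatedly, the reduction to the complete local case is not ``standard faithfully flat descent'' for the condition ``\(F\) faithfully flat'': since \(F_*R\) is not module-finite in general, \(F_*R\otimes_R\widehat{R}\) and \(F_*\widehat{R}\) differ, so flatness of the Frobenius of \(\widehat{R}\) does not formally transfer to \(R\). The standard repair avoids both issues: \(R\to F_*R\) is a local map of Noetherian local rings with \(F_*R\) finite over itself, and \(\operatorname{Tor}_1^R(k,F_*R)\) is the first Koszul homology of \(x_1^p,\dots,x_d^p\), a regular sequence, so the local criterion of flatness applies directly, with no completion and no basis.

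Second, the crux direction ``\(F\) faithfully flat \(\Rightarrow\) \(R\) regular'' is not closed by your asymptotics, and the inequality you invoke points the wrong way: an upper bound \(\ell(R/\mathfrak{m}^{[p^n]})\le Cp^{ne}\) combined with the value \(p^{n\dim R}\) pinned down by flatness yields only \(\dim R\le e\), which is automatic; regularity needs the opposite comparison, i.e.\ either a lower bound of order \(p^{ne}\) forced by flatness, or Kunz's lemma that \(\ell(R/\mathfrak{m}^{[q]})=q^{\dim R}\) for some \(q\) already forces \(\mathfrak{m}\) to be generated by a system of parameters. That is exactly the nontrivial content, and your sketch leaves it open. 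A smaller slip: in the \(R_{\mathrm{perf}}\) converse, the base change \(R_{\mathrm{perf}}\otimes_R F_*R\) is not isomorphic to \(R_{\mathrm{perf}}\) (already for a non-perfect field \(k\) it is \(k^{1/p^\infty}\otimes_k k^{1/p}\), which has nonzero nilpotents), so no isomorphism is produced; the correct formal argument is that bijectivity of Frobenius on \(R_{\mathrm{perf}}\) identifies the composite \(R\xrightarrow{F}R\to R_{\mathrm{perf}}\) with the canonical (faithfully flat) map, and flatness of \(F\) then follows by descent along the faithfully flat \(R\to R_{\mathrm{perf}}\).
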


By considering the case of mixed characteristic \((0, p)\), the theory of positive characteristic rings has been extended to the ``\emph{\(p\)-adic commutative algebra theory}'', that is the theory of local rings of residue characteristic \(p\).
However, in mixed characteristic, we cannot define the ``\emph{Frobenius map}'' in the same way, and this lack of the mysterious map \(F\) is a big problem.

Instead of searching for the ``\emph{Frobenius map}'', the notion of \emph{perfectoid rings} - a generalization of perfect rings to mixed characteristic introduced by Scholze \cite{scholze2012Perfectoida} - has been applied in commutative algebra after Andr\'e \cite{andre2018Lemme,andre2018Conjecture} and Bhatt \cite{bhatt2014Almosta,bhatt2018Direct}.
Recently, as perfectoid theory progressed, Bhatt--Scholze \cite{bhatt2022Prismsa} developed the theory of \emph{prisms} and \emph{prismatic complexes}, enabling the exploration of the generalization of the Frobenius map.

We first briefly recall these notions.
A (\(p\)-torsion-free) \emph{\(\delta\)-ring \(A\)} is a (\(p\)-torsion-free) \(\setZ_{(p)}\)-algebra \(A\) equipped with an endomorphism \(\map{\varphi_A}{A}{A}\) called a Frobenius lift such that \(\varphi_A\) induces the Frobenius map \(F\) on \(A/pA\).
A pair \((A, I)\) of a \(\delta\)-ring \(A\) and its ideal \(I\) is a \emph{prism} if \(A\) is derived \((p, I)\)-complete, \(I\) defines a locally free \(A\)-module of rank \(1\), and \(p \in I + \varphi_A(I)A\) (see \Cref{DefDeltaRing} and \Cref{DefPrism}).
If \(R\) is an \(A/I\)-algebra, we can define the \emph{prismatic complex \(\prism_{R/A}\) of \(R\) relative to \(A\)} equipped with the \(\varphi_A\)-semilinear endomorphism \(\varphi\) on \(\prism_{R/A}\) (see \Cref{PrismaticComplexHodgeTate}).

Under this notation, the philosophy of the generalization of the Frobenius map can be captured by the following slogan:

\begin{slogan}[{cf. \cite[Remark 5.6]{bhatt2022Algebraic} and \cite[\S 1.4]{bhatt2021CohenMacaulayness}}] \label{Slogan}
    Let \(R\) be a ``good'' Noetherian local ring of residue characteristic \(p\).
    The \emph{``Frobenius map''} on \(R\) is the \(\varphi_A\)-semilinear endomorphism \(\varphi\) on a \emph{prismatic complex} \(\prism_{R/A}\) for some prism \((A, I)\) such that \(R\) is an \(A/I\)-algebra.
\end{slogan}

Under this philosophy, Bhatt showed the Cohen-Macaulayness of absolute integral closures \cite{bhatt2021CohenMacaulayness}.
Our purpose in this paper is to give one of the formations of ``\emph{prismatic Kunz's theorem}'' by using this ``\emph{Frobenius map}'' (\Cref{maintheoremPrismaticKunz}).
To prove this, we study the deformation problem of the ``regularity'' of prisms.

\begin{theorem}[{\Cref{EquivRegularPrism}}] \label{maintheoremDeformation}
    Let \((A, I)\) be a bounded prism (here we do not yet assume that \(A\) is Noetherian).
    We say that \((A, I)\) is a \emph{regular local prism} if it satisfies one of the following equivalent conditions:
    \begin{enumerate}
        \item[(1)] \(A\) is a regular local ring.
        \item[(1')] \(A\) is an unramified regular local ring, namely, \((A, \mfrakm_A)\) is a regular local ring and \(p \in \mfrakm_A \setminus \mfrakm_A^2\).
        \item[(2)] \(A/pA\) is a regular local ring.
        \item[(3)] \(A/I\) is a regular local ring.
    \end{enumerate}
\end{theorem}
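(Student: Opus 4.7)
The plan is to reduce all four equivalences to a single structural lemma: for any local prism $(A, I = (d))$, the distinguished element $d$ lies in $\mfrakm_A \setminus \mfrakm_A^2$, and moreover $p \in \mfrakm_A \setminus \mfrakm_A^2$ whenever $p$ is nonzero in $A$. The proof of this lemma uses that $d$ being distinguished forces $\delta(d) \in A^\times$, together with the elementary observation that the $\delta$-ring axioms $\delta(xy) = x^p\delta(y) + y^p\delta(x) + p\delta(x)\delta(y)$ and $\delta(x+y) = \delta(x) + \delta(y) + C(x,y)$ (with $C(x,y) \in \mfrakm_A^p$ for $x,y \in \mfrakm_A$) force $\delta(\mfrakm_A^2) \subseteq \mfrakm_A^p + pA \subseteq \mfrakm_A$; thus $d \in \mfrakm_A^2$ would make $\delta(d)$ a non-unit, contradicting distinguishedness. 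The same conclusion for $p$ in mixed characteristic follows from the identity $\delta(p) = 1 - p^{p-1}$, which is a unit whenever $p \neq 0$ in $A$.

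With the lemma in hand, the forward implications reduce to standard facts about regular local rings. The direction $(1') \Rightarrow (1)$ is trivial. For $(1) \Rightarrow (3)$: since $A$ is a regular local domain and $d$ is a nonzerodivisor in $\mfrakm_A \setminus \mfrakm_A^2$, it extends to a regular system of parameters, so $A/I = A/(d)$ is regular of dimension $\dim A - 1$. The implication $(1) \Rightarrow (2)$ is analogous with $p$ in place of $d$ in mixed characteristic, and is trivial in equal characteristic $p$ where $A/pA = A$. Finally, $(1) \Rightarrow (1')$ is precisely the mixed-characteristic half of the lemma.

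For the reverse directions, I would invoke the classical lifting principle: if $x \in \mfrakm_A$ is a nonzerodivisor and $A/(x)$ is regular Noetherian local, then $A$ itself is regular Noetherian local with $x$ part of a regular system of parameters (argued by comparing dimensions and embedding dimensions). Applied with $x = d$ this yields $(3) \Rightarrow (1)$, and with $x = p$ it yields $(2) \Rightarrow (1)$ in mixed characteristic. Noetherianness of $A$ itself is transferred from $A/I$ (respectively from $A/pA$) using the derived $(p, I)$-completeness built into the prism together with the finite generation of $I$.

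The principal obstacle is the $\delta$-ring calculation at the heart of the lemma, namely the assertion $\delta(\mfrakm_A^2) \subseteq \mfrakm_A$; this is where the prism structure genuinely enters, and once it is in place everything reduces to familiar commutative algebra. A secondary subtlety is the interpretation of $(1')$ in equal characteristic $p$, where the literal condition $p \in \mfrakm_A \setminus \mfrakm_A^2$ is unsatisfiable; one must either adopt a convention that accommodates this case or verify it is excluded by context.
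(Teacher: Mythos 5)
Your overall architecture is essentially the paper's own: your structural lemma is \Cref{NoDeltaStructure} (proved by the same computation \(\delta(\mfrakm_A^2) \subseteq \mfrakm_A\), together with \(\delta(p) = 1 - p^{p-1}\)), and both the forward and reverse implications are the same appeals to extending an element of \(\mfrakm_A \setminus \mfrakm_A^2\) to a regular system of parameters and to the classical lifting criterion. The genuine gap is in (2) \(\Rightarrow\) (1): the lifting criterion you invoke requires the element to be a nonzerodivisor, and while this is automatic for \(x = d\) (a generator of the invertible ideal \(I\)), nothing in your proposal shows that \(p\) is a nonzerodivisor on \(A\). Restricting to ``mixed characteristic'' only gives \(p \neq 0\), which is not enough: for a zerodivisor the lifting statement fails outright (e.g.\ \(A = k[[x,y]]/(xy)\) is a non-regular Noetherian local ring with \(A/(x) \cong k[[y]]\) regular), and a priori a \(\delta\)-ring can carry nonzero (nilpotent) \(p\)-torsion. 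This is exactly the point of the paper's \Cref{pTorsionFreePrime}: since \(A/pA\) is a regular local ring, \(pA\) is prime; \(p\)-power torsion in a \(\delta\)-ring is nilpotent (\Cref{pPowerTorsionNil}), hence lies in \(pA\); and the \(p\)-adic separatedness of \(A\) (available because a bounded prism is classically \((p,I)\)-adically complete) lets one bootstrap \(A[p] \subseteq \bigcap_n p^n A = 0\). Your proof needs this step, or an equivalent argument, before \(x = p\) can be fed into the lifting criterion.

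Two smaller points. Your closing worry about equal characteristic is vacuous: a nonzero \(\delta\)-ring never has \(p = 0\) (if \(p^n = 0\) then \(1\) is \(p\)-power torsion, hence nilpotent by \Cref{pPowerTorsionNil}, forcing \(A = 0\)), so condition (1') needs no special convention and your case split between mixed and equal characteristic can simply be dropped. Also, the transfer of Noetherianness ``from derived \((p,I)\)-completeness and finite generation of \(I\)'' should be made precise: the paper obtains it from classical completeness (again supplied by boundedness, after the torsion-freeness above) together with the standard fact that an adically complete ring whose quotient by the ideal is Noetherian is itself Noetherian, as in the proof of \Cref{EquivRegularPrism}.
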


This is a useful criterion to check whether a prism is regular or not.
By only using this, we can show a one-to-one correspondence (\Cref{CorrespCompleteRegularLocalRing}) between the set of \(d\)-dimensional complete regular local rings with residue field \(k\) of characteristic \(p\) and the set of prismatic structures of the formal power series ring \(C(k)[|T_1, \dots, T_n|]\) whose coefficient ring \(C(k)\) is the Cohen ring of \(k\) defined in \Cref{CohenRing}.

We frequently encounter not only a prism \((A, I)\) itself but also an \(A/I\)-algebra \(R\).
Two common cases are as follows (for additional context, refer to \Cref{RemarkBoundedAssumption} and \Cref{RemarkPrismAssumption}).

One is the case of when \(R\) is a semiperfectoid ring (i.e., \(R\) is derived \(p\)-complete and is a quotient of a perfectoid ring).
In this case, \(R\) admits a surjective map \(A \twoheadrightarrow R\) with a perfect prism \((A, I)\) such that \(I\) is contained in \(\ker(A \twoheadrightarrow R)\).
This situation often arises in arithmetic geometry and is one of the comprehensible cases within the theory of prismatic complexes and perfectoidization (refer to \cite[\S 7]{bhatt2022Prismsa} and \cite{ishizuka2023Calculation} for further details).

Another case occurs in commutative ring theory: Any complete Noetherian local ring \(R\) admits a surjective \(A \twoheadrightarrow R\) with a complete regular local prism \((A, I)\) (i.e., \((A, I)\) is a prism, and \(A\) is a complete regular local ring: Refer to the equivalence in \Cref{maintheoremDeformation} above) such that \(I\) is contained in \(\ker(A \twoheadrightarrow R)\).
This result stems from Cohen's structure theorem (see \Cref{DefofRegularPrism} and \Cref{Envelope} for more details).

The second main theorem (\Cref{maintheoremFFlat}) covers these cases.

\subsection{Faithful flatness of prismatic complexes}

One of our main theorems is the following general theory of prismatic complexes.
This is a generalization of \cite[Example VII.4.4]{bhatt2018Geometric} which requires the assumption that \(R\) becomes a regular semiperfectoid ring with a perfect prism \((A, I)\).
To formulate this theorem, we use the notion of \emph{animated rings} (see \Cref{AnimatedAppendix} for the basic knowledge of animated rings).
In particular, the following ``derived enhancement'' of a quotient ring is a key tool in our proof.

\begin{construction}[{\Cref{ConstructionAnimatedQuotient}}]
    Let \((A, I)\) be a bounded prism and let \(R \defeq A/J\), where \(J = (I, f_1, \dots, f_r)\) with a (not necessarily regular) sequence of elements \(f_1, \dots, f_r\) in \(A\).
    We can define an animated \(A/I\)-algebra by taking a derived quotient
    \begin{equation*}
        R^{an}(\underline{f}) \defeq R^{an}(f_1, \dots, f_r) \defeq (A/I)/^L(f_1, \dots, f_r) = A/I \otimes^L_{\setZ[X_1, \dots, X_r]} \setZ,
    \end{equation*}
    where \(A/I \leftarrow \setZ[X_1, \dots, X_r] \rightarrow \setZ\) is defined by \(f_i \mapsfrom X_i \mapsto 0\).

    As a complex, this is equal to the Koszul complex of \(f_1, \dots, f_r\) over \(A/I\) and especially this depends on the choice of the elements \(f_1, \dots, f_r\).
    However, our prismatic Kunz's theorem (\Cref{maintheoremPrismaticKunz}) does not depend on a specific choice of \(f_1, \dots, f_r\) and thus one can apply the theorem with any choice one finds convenient (\Cref{WarningChoice}).
\end{construction}

In order to prove prismatic Kunz's theorem, we need to show the faithful flatness of the structure map of the prismatic complex.

\begin{theorem}[{\Cref{AnimatedLCIPrism} and \Cref{NoetherianCase}}] \label{maintheoremFFlat}
    Let \((A, I)\) be a bounded prism and let \(R \defeq A/J\), where \(J = (I, f_1, \dots, f_r)\) with a (not necessarily regular) sequence of elements \(f_1, \dots, f_r\) in \(A\).
    Set an animated ring \(R^{an}(\underline{f})\) as above.
    Then the canonical maps of rings
    \begin{equation*}
        R/p^nR \to \pi_0(\overline{\prism}_{R^{an}(\underline{f})/A})/(p^n)
    \end{equation*}
    are faithfully flat for all \(n \geq 1\), where \(\pi_0(\overline{\prism}_{R^{an}(\underline{f})/A})\) is the connected component of the Hodge--Tate complex \(\overline{\prism}_{R^{an}(\underline{f})/A} = \prism_{R^{an}(\underline{f})/A} \otimes^L_A A/I\), that is, the \(0\)-th cohomology of \(\overline{\prism}_{R^{an}(\underline{f})/A}\) when we regard it as a commutative algebra object in the derived category \(D(R^{an}(\underline{f}))\) of \(R^{an}(\underline{f})\)-modules.
    Moreover, if \(R\) is Noetherian, the map \(R \to \pi_0(\overline{\prism}_{R^{an}(\underline{f})/A})\) itself is faithfully flat.
\end{theorem}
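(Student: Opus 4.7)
The plan is to exploit the conjugate filtration on the Hodge--Tate complex $\overline{\prism}_{R^{an}/A}$. For an animated $A/I$-algebra $S$, left Kan extension from polynomial $A/I$-algebras equips $\overline{\prism}_{S/A}$ with an exhaustive increasing filtration $\mathrm{Fil}^{\mathrm{conj}}_\bullet$ whose graded pieces are
\[
\mathrm{gr}^n_{\mathrm{conj}} \overline{\prism}_{S/A} \simeq \bigl(L\wedge^n_S L_{S/(A/I)}\bigr)\{-n\}[-n].
\]
Applied to $S = R^{an} = (A/I) \otimes^L_{\mathbb{Z}[X_1,\ldots,X_r]} \mathbb{Z}$, base change of the cotangent complex together with $L_{\mathbb{Z}/\mathbb{Z}[X_1,\ldots,X_r]} \simeq \mathbb{Z}^{\oplus r}[1]$ yields $L_{R^{an}/(A/I)} \simeq (R^{an})^{\oplus r}[1]$, a free $R^{an}$-module in homological degree $1$.

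Next I would invoke the derived wedge formula $L\wedge^n(M[1]) \simeq \Gamma^n(M)[n]$ for flat $M$ to identify
\[
\mathrm{gr}^n_{\mathrm{conj}} \overline{\prism}_{R^{an}/A} \simeq \Gamma^n\bigl((R^{an})^{\oplus r}\bigr)\{-n\},
\]
a free $R^{an}$-module concentrated in homological degree zero (the Breuil--Kisin twist $\{-n\}$ does not affect the underlying module). Since $\pi_i$ of each graded piece vanishes for $i \neq 0$, the fiber sequences $\mathrm{Fil}^{n-1}_{\mathrm{conj}} \to \mathrm{Fil}^n_{\mathrm{conj}} \to \mathrm{gr}^n_{\mathrm{conj}}$ induce short exact sequences on $\pi_0$ with free cokernel, and therefore split. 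Induction on $n$ and passage to the filtered colimit (which $\pi_0$ respects) gives
\[
\pi_0(\overline{\prism}_{R^{an}/A}) \;\cong\; \bigoplus_{n \geq 0} \Gamma^n(R^{\oplus r})\{-n\}
\]
as an $R$-module, a divided-power algebra on $r$ generators that is $R$-free with $\Gamma^0 = R$ as a direct summand. Faithful flatness of $R \to \pi_0(\overline{\prism}_{R^{an}/A})$ is immediate from this decomposition, and since each summand is $R$-flat the description is preserved under $-\otimes_R R/p^n$, proving the first assertion.

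For the Noetherian upgrade, the key inputs are: $\overline{\prism}_{R^{an}/A}$ is derived $p$-complete by construction, so $\pi_0(\overline{\prism}_{R^{an}/A})$ is a derived $p$-complete $R$-module; $R$ is classically $p$-adically complete, being a Noetherian quotient of the derived $(p, I)$-complete ring $A$; and we have already established faithful flatness modulo every power of $p$. A standard $p$-complete flatness criterion (a derived $p$-complete module over a Noetherian $p$-adically complete base is faithfully flat as soon as its reductions modulo $p^n$ are faithfully flat) then promotes the conclusion to genuine faithful flatness of $R \to \pi_0(\overline{\prism}_{R^{an}/A})$.

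The main obstacle I anticipate is justifying the conjugate filtration together with its graded-piece formula for the \emph{animated} algebra $R^{an}$: both are originally defined for polynomial $A/I$-algebras, and one must ensure that left Kan extension preserves the filtration, the Breuil--Kisin twists, and the identification of graded pieces with derived wedge powers of the cotangent complex. Once this foundational point (essentially the animated enhancement of results in \cite{bhatt2022Prismsa}) is settled, the remainder of the proof is a formal filtration chase plus the standard $p$-complete flatness criterion.
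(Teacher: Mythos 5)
Your strategy is the same as the paper's (Hodge--Tate/conjugate filtration, $L_{R^{an}/(A/I)}\simeq (R^{an})^{\oplus r}[1]$, the identification $L\wedge^i(M[1])\simeq \Gamma^i(M)[i]$, splitting of the filtration by freeness), but there is a genuine gap at the passage to the colimit. The Hodge--Tate filtration is exhaustive only in the $p$-complete sense: $\overline{\prism}_{R^{an}/A}$ is the colimit of the $\Fil^{HT}_j$ in the $\infty$-category of derived $p$-complete modules, hence it is the \emph{derived $p$-completion} of the free animated $R^{an}$-module $\bigoplus_{i\geq 0}(R^{an})^{\oplus D_i}\{-i\}$, not the plain direct sum (for $r\geq 1$ the sum is infinite, so the completion genuinely intervenes). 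Consequently your formula $\pi_0(\overline{\prism}_{R^{an}/A})\cong \bigoplus_{n\geq 0}\Gamma^n(R^{\oplus r})\{-n\}$, and the resulting ``immediate'' integral faithful flatness of $R\to \pi_0(\overline{\prism}_{R^{an}/A})$, are not justified: the $p$-adic completion of an infinite free module over a general (non-Noetherian) $R$ need not be flat, and if your decomposition were correct the Noetherian hypothesis in the second assertion would be superfluous --- which is exactly what the theorem avoids claiming. The mod-$p^n$ assertion does survive, because completion is invisible after $-\otimes^L\mathbb{Z}/p^n\mathbb{Z}$; this is precisely how the paper argues, cf.\ the computation (\ref{pnQuotient}) in the proof of \Cref{AnimatedLCIPrism}. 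Two smaller points: the graded pieces are free animated $R^{an}$-modules but are \emph{not} concentrated in degree $0$ as complexes (when the $f_i$ are not regular, $R^{an}$ itself has higher homotopy), and the splitting should be produced from projectivity of free animated $R^{an}$-modules (a section of $\Fil^{HT}_j\to \gr^{HT}_j$ in $\Mod^{cn}_{R^{an}}$, as in the paper), since splitting merely on $\pi_0$ does not split the filtration before completion.

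The Noetherian upgrade also needs repair. The ``standard criterion'' you invoke --- a derived $p$-complete module over a Noetherian $p$-adically complete base is faithfully flat once its reductions mod $p^n$ are --- is not the standard statement: derived $p$-complete modules need not be $p$-adically separated, and the actual results (Yekutieli; \cite[Proposition 5.1]{bhatt2018Direct}; the paper's \Cref{CompletionFaithfullyFlat}) require either classical $p$-adic completeness of the module or $p$-complete flatness at the level of derived tensor products. The paper supplies the missing input as follows: since $R$ is Noetherian it has bounded $p^\infty$-torsion, hence so does the free module $\bigoplus R$, so $\pi_0$ of the derived $p$-completion is the classical $p$-adic completion of $\bigoplus R$ (\Cref{DerivedCompletionConnectedComponent}); then $\pi_0(\overline{\prism}_{R^{an}/A})$ is classically $p$-adically complete and \Cref{CompletionFaithfullyFlat} yields faithful flatness. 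With the completed colimit handled correctly and this completeness step added, your argument becomes the paper's proof of \Cref{AnimatedLCIPrism} and \Cref{NoetherianCase}.
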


This shows that some properties of \(\pi_0(\overline{\prism}_{R^{an}(\underline{f})/A})\) can descend to \(R\).
So the deep theory of prismatic complexes can be applied to the study of \(R\) involving most semiperfectoid rings and complete Noetherian local rings.

\subsection{Prismatic Kunz's theorem}

Applying \Cref{maintheoremFFlat} for commutative algebraic situation, we show the following formation of ``prismatic Kunz's theorem''.

\begin{theorem}[{Prismatic Kunz's theorem (\Cref{RegularGivesFFlat}, \Cref{FFlatGivesRegular}, and \Cref{AnimatedCase})}] \label{maintheoremPrismaticKunz}
    Let \((R, \mfrakm, k)\) be a complete Noetherian local ring of residue characteristic \(p\).
    For a fixed minimal generator \(x_1, \dots, x_n\) of \(R\), Cohen's structure theorem makes a surjective map
    \begin{equation*}
        A \defeq C(k)[|T_1, \dots, T_n|] \twoheadrightarrow R,
    \end{equation*}
    where \(C(k)\) is the Cohen ring of \(k\).
    Then there exists an ideal \(I\) of \(A\) contained in \(\ker(A \twoheadrightarrow R)\) such that \((A, I)\) becomes a prism (\Cref{Envelope}).
    Fix any sequence of elements \(f_1, \dots, f_r\) in \(A\) such that \(\ker(A \twoheadrightarrow R) = (I, f_1, \dots, f_r)\).
    Set an animated ring \(R^{an}(\underline{f})\).
    Under this construction, this theorem states that the following conditions are equivalent:
    \begin{enumerate}
        \item \(R\) is a regular local ring.
        \item The Frobenius lift \(\map{\varphi}{\prism_{R^{an}(\underline{f})/A}}{\varphi_{A,*}\prism_{R^{an}(\underline{f})/A}}\) of the (animated) \(\delta\)-\(A\)-algebra \(\prism_{R^{an}(\underline{f})/A}\) is faithfully flat.
    \end{enumerate}
\end{theorem}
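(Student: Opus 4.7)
The plan is to combine \Cref{maintheoremDeformation} with \Cref{maintheoremFFlat} so that each direction reduces to classical Kunz applied to a suitable characteristic-$p$ reduction of a Noetherian regular prism, with $p$-adic completeness lifting flatness back up.

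For the direction (1) $\Rightarrow$ (2), assume $R$ is regular. Since \Cref{maintheoremDeformation} identifies regularity of $(A,I)$ with regularity of $A/I$, the quotient $A/I$ is regular local; hence the regular quotient $R = (A/I)/(f_1,\dots,f_r)$ forces $f_1,\dots,f_r$ to be a regular sequence in $A/I$, and the animated quotient collapses, $R^{an} \simeq R$. I would then realize $\prism_{R/A}$ via the prismatic envelope of Bhatt--Scholze for a regular quotient, obtaining a classical prism $(\mathfrak{A}, I\mathfrak{A})$ with Hodge--Tate quotient $\mathfrak{A}/I\mathfrak{A} = R$. A second application of \Cref{maintheoremDeformation} to $(\mathfrak{A}, I\mathfrak{A})$ upgrades regularity of $R$ to regularity of $\mathfrak{A}$ itself, so $\mathfrak{A}/p$ is a regular Noetherian $\mathbb{F}_p$-algebra. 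Classical Kunz makes its Frobenius faithfully flat, and $p$-adic completeness together with the local criterion for flatness lift this to faithful flatness of $\varphi_\mathfrak{A}$ on $\prism_{R/A} = \mathfrak{A}$.

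For the direction (2) $\Rightarrow$ (1), assume $\varphi$ is faithfully flat. The goal is to deduce that $\pi_0(\overline{\prism}_{R^{an}/A})$ is a regular ring and then descend along the faithfully flat map $R \to \pi_0(\overline{\prism}_{R^{an}/A})$ supplied by \Cref{maintheoremFFlat}. Reducing $\varphi$ modulo $p$ produces a faithfully flat honest Frobenius on the derived $\mathbb{F}_p$-algebra $\prism_{R^{an}/A} \otimes^L_A A/pA$; a derived Kunz-type argument should force this complex to be concentrated in degree $0$ and classically regular. By derived $(p,I)$-completeness of $\prism_{R^{an}/A}$ and derived Nakayama, discreteness of the mod-$p$ reduction propagates to $\prism_{R^{an}/A}$ itself, and a final application of \Cref{maintheoremDeformation} to the resulting classical prism yields regularity of $\pi_0(\overline{\prism}_{R^{an}/A})$. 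Faithful flat descent of regularity then transfers this back to $R$.

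The hardest step will be the derived Kunz argument in the second direction: passing from faithful flatness of the Frobenius on an animated $\mathbb{F}_p$-algebra to discreteness plus classical regularity of $\pi_0$ without an a priori Noetherian hypothesis at the derived level. My plan for this is to treat $\pi_0$ and higher homotopy separately, using \Cref{maintheoremFFlat} and the Noetherianness of $R$ to introduce Noetherian control on the $\pi_0$-side so classical Kunz applies, and exploiting the $p$-th-power twist of the Frobenius on Tor groups in characteristic $p$ to force vanishing of the higher homotopy.
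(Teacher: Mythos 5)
Both directions of your plan have genuine gaps, and in both cases the missing ingredient is exactly what the paper's argument turns on.

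For (1) $\Rightarrow$ (2), your chain ``$R$ regular $\Rightarrow$ $f_1,\dots,f_r$ is a regular sequence on $A/I$ $\Rightarrow$ the prismatic envelope $\mathfrak{A}=\prism_{R/A}$ is a classical prism with $\mathfrak{A}/I\mathfrak{A}=R$ $\Rightarrow$ (deformation) $\mathfrak{A}$ is regular $\Rightarrow$ (Kunz) $\varphi$ is faithfully flat'' fails at the third step: for a proper regular-sequence quotient $R$ of $A/I$, the Hodge--Tate quotient $\overline{\prism}_{R/A}=\prism_{R/A}\otimes^L_A A/I$ is \emph{not} $R$ but the $p$-completion of a typically infinite free $R$-module (\Cref{AnimatedLCIPrism}), so \Cref{EquivRegularPrism} cannot be applied to $(\mathfrak{A},I\mathfrak{A})$ to conclude $\mathfrak{A}$ is regular. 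Worse, nowhere do you use that $(A,I)$ is the \emph{minimal} prism coming from a minimal generating set of $\mfrakm$ (i.e.\ $\dim(A)=\emdim(R)+1$ with $A/I$ a domain), and the implication is simply false without it: in \Cref{CounterExample} one has $R=\setF_p$ regular, $A=\setZ_p[|T|]$, $I=(p)$, $f_1=T$ a regular element of $A/I$, yet the Frobenius on $\prism_{R/A}=A\{T/p\}^{\wedge}$ is not faithfully flat. The paper's proof (\Cref{RegularGivesFFlat}) instead uses the dimension count $\dim(R)\leq\dim(A/I)=\dim(A)-1\leq\emdim(R)=\dim(R)$ together with $A/I$ being a domain to force $A/I\cong R$, whence $\prism_{R^{an}/A}\cong A$, and only then applies \Cref{EquivRegularPrism}, classical Kunz, and \Cref{TransversalFFlatEquiv}; that dimension argument is the step your proposal is missing.

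For (2) $\Rightarrow$ (1), your strategy is to run a ``derived Kunz'' argument on $\prism_{R^{an}/A}\otimes^L_A A/pA$ to get discreteness and regularity of $\pi_0(\overline{\prism}_{R^{an}/A})$ and then descend regularity along $R\to\pi_0(\overline{\prism}_{R^{an}/A})$. But Kunz-type statements require Noetherian hypotheses on the ring whose Frobenius is flat, and $\prism_{R^{an}/A}$ and $\pi_0(\overline{\prism}_{R^{an}/A})$ are in general far from Noetherian (again, $\pi_0(\overline{\prism}_{R^{an}/A})$ is a $p$-completed infinite free $R$-module); faithful flatness of Frobenius on such rings does not imply regularity --- perfect $\setF_p$-algebras and perfect prisms are the standard counterexamples --- and Noetherianness of $R$ does not transfer along the faithfully flat but highly non-finite map $R\to\pi_0(\overline{\prism}_{R^{an}/A})$, so the ``Noetherian control on the $\pi_0$-side'' you hope for is not available. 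The paper's proof (\Cref{FFlatGivesRegular}) avoids proving any regularity of the prismatic complex: it perfects, forming $\prism_{R^{an}/A,\infty}\defeq(\colim_{\varphi}\prism_{R^{an}/A})^{\wedge_{(p,I)}}$, whose Hodge--Tate quotient is a perfectoid ring; faithful flatness of $\varphi$ makes $\pi_0(\prism_{R^{an}/A})\to(\colim_\varphi\prism_{R^{an}/A})^{\wedge_p}$ $p$-completely faithfully flat, and composing with $R\to\pi_0(\overline{\prism}_{R^{an}/A})$ from \Cref{NoetherianCase} gives a faithfully flat map from the Noetherian ring $R$ to a perfectoid ring, to which $p$-adic Kunz (\Cref{KunzRegular}) is applied directly. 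In other words, the Kunz input must be applied to $R$ itself, not to the prismatic complex; as written, your reduction has no valid target for a Kunz theorem.
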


This theorem is proved by our second main theorem (\Cref{maintheoremFFlat}) and \(p\)-adic Kunz's theorem developed by Bhatt--Iyengar--Ma \cite{bhatt2019Regular} (see \Cref{KunzRegular}).
Using this insight, we propose the Frobenius lift on \(\prism_{R^{an}(\underline{f})/A}\) as the ``\emph{Frobenius map}'' for complete Noetherian local rings with residue characteristic \(p\) as stated in \Cref{Slogan}.

Although this theorem is itself an application of the second theorem (\Cref{maintheoremFFlat}), this gives us two applications.
The first application is a characterization of the regularity of prisms by using the faithful flatness of the Frobenius lift (\Cref{PrismaticKunzCor}).
The second application is a new proof of the stability of regularity of complete Noetherian local rings under the localization of prime ideals that contains \(p\) (\Cref{KunzAnotherProof}).

Furthermore, as an alternative result, it is possible to prove the first application (\Cref{PrismaticKunzCor}) without using \Cref{maintheoremFFlat}, solely relying on classical Kunz's theorem.
In this case, this \Cref{KunzAnotherProof} concerning the stability of regularity can be proved without Serre's regularity criterion and even without \(p\)-adic Kunz's theorem.

\subsection*{Outline}
We begin in \Cref{SectionTransversalPrisms} by recalling the notion of (transversal) prisms and giving some basic properties.
In \Cref{SectionRegularPrisms}, we show the deformation property of the regularity of prisms (\Cref{maintheoremDeformation}).
\Cref{maintheoremFFlat} is proven in \Cref{SectionPrismaticComplexes} in which we recall the notion of prismatic complexes.
Our main purpose of this paper, prismatic Kunz's theorem (\Cref{maintheoremPrismaticKunz}), is proven in \Cref{SectionPrismaticKunz} by using \Cref{maintheoremFFlat} and \(p\)-adic Kunz's theorem recalled in \Cref{KunzRegular}.
We freely use the language of higher algebra such as \(\infty\)-categories and animated rings after \Cref{SectionPrismaticComplexes} which we briefly summarize in \Cref{AnimatedAppendix}.

\subsection*{Acknowledgments}
The first-named author would like to express his sincere gratitude to Dimitri Dine, Tetsushi Ito, and Teruhisa Koshikawa for their valuable conversations.
The authors are deeply thankful to Kazuki Hayashi, Shinnosuke Ishiro, and Kazuma Shimomoto for their continuous support.
Special thanks to Bhargav Bhatt for reviewing the early drafts, especially a counterexample \Cref{CounterExample}, and pointing out a mistake in the proof of \Cref{AnimatedLCIPrism}.
The first-named author was partially supported by JSPS KAKENHI Grant Number 24KJ1085.
The second-named author was partially supported by JSPS Grant-in-Aid for Early-Career Scientists 23K12952.

\section{Transversal Prisms} \label{SectionTransversalPrisms}

In this section, we introduce the notion of transversal prisms, which is defined in \cite{anschutz2020Pcompleted}.
First, we recall the definition of \(\delta\)-rings and prisms.

\begin{definition}[{\cite[Definition 2.1]{bhatt2022Prismsa}}] \label{DefDeltaRing}
    Let \(A\) be a ring.
    A \emph{\(\delta\)-structure on \(A\)} is a map of sets \(\map{\delta}{A}{A}\) such that \(\delta(0) = \delta(1) = 0\) and
    \begin{equation*}
        \delta(a + b) = \delta(a) + \delta(b) + \frac{a^p + b^p - (a + b)^p}{p}; \delta(ab) = a^p \delta(b) + b^p \delta(a) + p\delta(a)\delta(b)
    \end{equation*}
    for all \(a, b \in A\).
    A \emph{\(\delta\)-ring} is a pair \((A, \delta)\) of a ring \(A\) and a \(\delta\)-structure on \(A\).
    We often omit the \(\delta\)-structure \(\delta\) and simply say that \(A\) is a \(\delta\)-ring.
    An element \(a \in A\) is called a \emph{distinguished element} (resp., \emph{of rank-\(1\)}) if \(\delta(a)\) is invertible in \(A\) (resp., \(\delta(a) = 0\)).

    On a \(\delta\)-ring \(A\), a map of sets \(\map{\varphi}{A}{A}\) is defined as
    \begin{equation*}
        \varphi(a) \defeq a^p + p \delta(a)
    \end{equation*}
    for all \(a \in A\).
    By the definition of \(\delta\), \(\varphi\) gives a ring endomorphism and we call it the \emph{Frobenius lift} on the \(\delta\)-ring \(A\).
    The induced map on \(A/pA\) becomes the usual Frobenius map \(\map{F}{A/pA}{A/pA}\).

    We often use the symbol \(\varphi_*(-)\) and \(F_*(-)\) as the restriction of scalars along \(\varphi\) and \(F\), respectively.
\end{definition}

We recall the definition of the derived completeness.
The derived \(\mfraka\)-completeness of modules over animated rings is also introduced in \cite[\S 7.3]{lurie2018Spectral} and we recall it in \Cref{DefDerivedCompletion}.

\begin{definition}[{cf. \citeSta{091N} and \cite[\S 1.2]{bhatt2022Prismsa}}] \label{DefDerivedComplete}
    Let \(A\) be a ring and let \(\mfraka = (f_1, \dots, f_r)\) be a finitely generated ideal of \(A\).
    A complex \(M\) of \(A\)-modules is \emph{derived \(\mfraka\)-complete} if the canonical map
    \begin{equation*}
        M \to \widehat{M} \defeq R\lim_n (M \otimes^L_A \Kos(A; f_1^n, \dots, f_r^n))
    \end{equation*}
    is an isomorphism in \(D(A)\), where \(\Kos(A; f_1^n, \dots, f_r^n)\) is the Koszul complex of \(A\) with respect to \(f_1^n, \dots, f_r^n\).
    This derived limit \(\widehat{M}\) is called the \emph{derived \(\mfraka\)-completion} of \(M\).

    An \(A\)-module \(M\) is \emph{derived \(\mfraka\)-complete} if the complex \(M[0]\) is derived \(\mfraka\)-complete as a complex of \(A\)-modules.
    We say that \(A\) is \emph{derived \(\mfraka\)-complete} if \(A\) is derived \(\mfraka\)-complete as an \(A\)-module.
    A complex \(M\) is derived \(\mfraka\)-complete if and only if each \(H^i(M)\) is derived \(\mfraka\)-complete for all \(i \in \setZ\).
\end{definition}

Some properties of derived \(\mfraka\)-completeness are summarized in, for example, \citeSta{019N}, \cite[\S 1.2]{bhatt2022Prismsa}, \cite[Lecture III]{bhatt2018Geometric}, and \cite[\S 6]{kedlayaNotes}.
For convenience, we recall the following lemma.

\begin{lemma}[{\cite[Corollary 6.3.2]{kedlayaNotes}}] \label{LemmaDerivedComplete}
    Let \(A\) be a ring and let \(\mfraka\) be a finitely generated ideal of \(A\).
    If \(A\) is derived \(\mfraka\)-complete, then \(A\) is \(\mfraka\)-Zariskian, that is, \(\mfraka \subseteq \Jac(A)\) where \(\Jac(A)\) is the Jacobson radical of \(A\).
\end{lemma}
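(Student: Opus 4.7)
The plan is to establish the equivalent condition that $1-z$ is a unit in $A$ for every $z \in \mfraka$. Since $\mfraka$ is an ideal, one has $zy \in \mfraka$ for every $y \in A$, so producing a unit of the form $1 - z'$ for each $z' \in \mfraka$ yields $1 - zy \in A^\times$ for every $y$, which is the standard criterion for $z \in \Jac(A)$.

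To produce this unit I would introduce the cyclic quotient $M := A/(1-z)A$ and reduce the statement to $M = 0$. The first key observation is that $M$ is derived $\mfraka$-complete: because $A$ itself is derived $\mfraka$-complete and the full subcategory of derived $\mfraka$-complete $A$-modules is abelian (in particular closed under cokernels of maps of finite presentation), the cokernel of $A \xrightarrow{\,1-z\,} A$ inherits derived completeness. Next, I would compute
\begin{equation*}
    M/\mfraka M \;=\; A/(\mfraka + (1-z)A),
\end{equation*}
and exploit that $z \in \mfraka$ to write $1 = (1-z) + z \in \mfraka + (1-z)A$, which forces $M/\mfraka M = 0$, i.e.\ $\mfraka M = M$.

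The final step is an appeal to derived Nakayama's lemma: any derived $\mfraka$-complete $A$-module $M$ satisfying $\mfraka M = M$ must vanish. This yields $M = 0$, hence $(1-z)A = A$, so $1-z$ is a unit as required. I do not anticipate any serious obstacle here, as the proof is formal once derived Nakayama is granted; that result is the substantive ingredient, functioning as the derived-complete analogue of the classical Nakayama lemma and replacing the literal convergence of the geometric series $\sum_{n \geq 0} z^n$ that would work in the $\mfraka$-adically complete setting.
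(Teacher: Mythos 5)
Your proposal is correct, and it is essentially the standard argument behind the cited result: the paper gives no proof of its own beyond the reference to Kedlaya's notes, and your route — reduce to showing \(1-z\) is a unit for \(z \in \mathfrak{a}\), observe that \(A/(1-z)A\) is derived \(\mathfrak{a}\)-complete because derived complete modules are closed under cokernels, note \(\mathfrak{a}\) kills the quotient since \(1 = (1-z)+z\), and conclude by derived Nakayama — is precisely the usual proof of that citation. No gaps: derived Nakayama for a derived \(\mathfrak{a}\)-complete module \(M\) with \(M=\mathfrak{a}M\) requires only that \(\mathfrak{a}\) be finitely generated, which is assumed.
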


Next, we recall the definition of prisms.

\begin{definition}[{\cite[Definition 3.2]{bhatt2022Prismsa}}] \label{DefPrism}
    Let \((A, I)\) be a pair of a \(\delta\)-ring \(A\) and its ideal \(I\).
    We call such a pair \((A, I)\) a \emph{\(\delta\)-pair}.\footnote{In the recent study \cite{antieau2023Prismatic} of a generalization of prismatic cohomology, a pair \((A, R)\) consisting a \(\delta\)-ring \(A\) and an \(A\)-algebra \(R\) is also called a \emph{\(\delta\)-pair} (\Cref{DefPrePrismatic}). Both terms are used in this paper because there is seldom any confusion (see \Cref{RemarkDeltaPair}).}
    The pair \((A, I)\) is a \textit{prism} if the following conditions hold:
    \begin{enumerate}
        \item \(I\) defines a locally free \(A\)-module of rank \(1\).
        \item \(A\) is derived \((p, I)\)-complete.
        \item \(p \in I + \varphi(I)A\).
    \end{enumerate}

    A prism \((A, I)\) is called
    \begin{enumerate}
        \item \textit{perfect} if \(A\) is a perfect \(\delta\)-ring, i.e., \(\varphi\) is an automorphism of \(A\).
        \item \textit{bounded} if \(A/I\) has bounded \(p^\infty\)-torsion.
        \item \textit{orientable} if \(I\) is a principal ideal of \(A\) and its generator is called an \textit{orientation} of \((A, I)\).
        \item \textit{crystalline} if \(I = (p)\).
    \end{enumerate}
    Note that any orientation \(\xi\) of an orientable prism \((A, I)\) becomes a non-zero-divisor and a distinguished element of \(A\) because of \cite[Lemma 2.25]{bhatt2022Prismsa}.
\end{definition}

\begin{remark} \label{RemarkBoundedAssumption}
    The assumption of being a bounded prism is foundational for several reasons:
    In their work \cite{bhatt2022Prismsa}, prisms are introduced as a form of ``deperfection'' in comparison to perfectoid rings.
    This assertion is grounded in their theorem establishing an equivalence between the category of perfect prisms and the category of perfectoid rings.
    Moreover, it is established that any perfect prism is, in fact, bounded.
    Consequently, when considering a perfectoid ring, a natural consideration arises for it to be a bounded prism.

    Another case is when \((A, I)\) is a prism with a Noetherian ring \(A\).
    It is also bounded because of the Noetherian assumption.
    Thus, the assumption of a prism being bounded encompasses both the former ``arithmetic'' case and the latter ``ring-theoretic'' case.
    Refer also to the accompanying remark (\Cref{RemarkPrismAssumption}) for further clarification.
\end{remark}

In commutative algebra in mixed characteristic, we need the following crystalline prism \((C(k), pC(k))\) which is well-known to the experts.

\begin{lemma} \label{CohenRing}
    Let \(k\) be a (not necessarily perfect) field of characteristic \(p\).
    Then there exists the Cohen ring \(C(k)\) of \(k\), that is, \(C(k)\) is the unique (up to isomorphism) absolutely unramified complete discrete valuation ring \(C(k)\) such that \(C(k)/p C(k)\) is isomorphic to \(k\).
    Furthermore, \(C(k)\) has a (not necessarily unique) \(\delta\)-structure consisting of a crystalline prism \((C(k), p C(k))\).
\end{lemma}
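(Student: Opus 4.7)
The plan is to split the statement into the classical existence and uniqueness of the Cohen ring \(C(k)\) and the construction of a \(\delta\)-structure making \((C(k), pC(k))\) a crystalline prism. For the first part, I would simply invoke classical results: existence follows from Cohen's structure theorem (or, equivalently, from a direct construction using the Witt vectors of the perfection of \(k\) together with a choice of lift of a \(p\)-basis of \(k\)), and uniqueness up to (non-canonical) isomorphism follows from the formal smoothness of any absolutely unramified complete \(p\)-ring over \(\setZ_p\). A complete reference is \citeSta{0323} or Serre, \emph{Local Fields}, Chapter II.

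For the second part, the key observation is that, since \(C(k)\) is \(p\)-torsion-free, giving a \(\delta\)-structure on \(C(k)\) is equivalent to giving a ring endomorphism \(\map{\varphi}{C(k)}{C(k)}\) whose reduction modulo \(p\) is the Frobenius \(F\) on \(k\): given such a lift one simply sets \(\delta(a) \defeq (\varphi(a) - a^p)/p\). To produce \(\varphi\), I would use the formal smoothness of \(C(k)\) over \(\setZ_p\) combined with its \(p\)-adic completeness. Inductively, starting from \(\varphi_1 \defeq F \circ \pi\), where \(\map{\pi}{C(k)}{k}\) is the reduction, formal smoothness lifts a given \(\map{\varphi_n}{C(k)}{C(k)/p^n C(k)}\) across the square-zero extension \(C(k)/p^{n+1} C(k) \twoheadrightarrow C(k)/p^n C(k)\) (for \(n \geq 1\)) to a map \(\varphi_{n+1}\); \(p\)-adic completeness of the target then assembles the compatible system \(\{\varphi_n\}\) into the desired endomorphism \(\map{\varphi}{C(k)}{C(k)}\).

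Finally, the prism axioms for \((C(k), pC(k))\) are immediate from the structure of a Cohen ring: the ideal \(pC(k)\) is generated by the non-zero-divisor \(p\) (as \(C(k)\) is a DVR with uniformizer \(p\)), so it is invertible of rank one; \(C(k)\) is \(p\)-adically complete by definition, hence in particular derived \((p, pC(k))\)-complete; and \(p \in pC(k) + \varphi(pC(k))\) holds trivially. Since \(I = (p)\), this realises \((C(k), pC(k))\) as a \emph{crystalline} prism. The only real obstacle is the lifting step in the construction of \(\varphi\), but this is routine once formal smoothness of \(C(k)/\setZ_p\) is in hand; the non-uniqueness of \(\delta\) noted in the statement reflects precisely the non-uniqueness of such Frobenius lifts when \(k\) is not perfect.
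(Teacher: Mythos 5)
Your proposal is correct and follows essentially the same route as the paper: both reduce the \(\delta\)-structure to producing a ring endomorphism of \(C(k)\) lifting the Frobenius on \(k\) (legitimate since \(C(k)\) is \(p\)-torsion-free), and both get the prism axioms for \((C(k), pC(k))\) immediately from \(p\) being a non-zero-divisor generator of the ideal. The only difference is that where the paper simply cites the lifting theorem for Cohen rings (\cite[Theorem 29.2]{matsumura1986Commutative}), you re-derive it by hand via formal smoothness of \(C(k)\) over \(\setZ_p\) and an induction over the square-zero extensions \(C(k)/p^{n+1} \twoheadrightarrow C(k)/p^n\), which is a sound unwinding of that reference.
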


\begin{proof}
    The existence of the Cohen ring \(C(k)\) is well-known.
    The existence of a \(\delta\)-structure is a general theory of Cohen rings: By \cite[Theorem 29.2]{matsumura1986Commutative}, there exists a (non-unique) local homomorphism \(\map{\varphi}{C(k)}{C(k)}\) which induces the Frobenius map \(\map{F}{k}{k}\) on the residue fields (another reference is \cite[\S 1.2.4 and \S 3.3.1]{fontaineTheory}).
\end{proof}

Another important class of prisms is \emph{transversal prisms} (\Cref{DefofTransversal}) introduced by Ansch\"utz--Le Bras in \cite{anschutz2020Pcompleted}.
To define this, we need the following lemmas which guarantee some flexibility of regular sequences.

\begin{lemma} \label{PermutableDerivedComplete}
    Let \(A\) be a ring and \(x, y\) be a regular sequence on \(A\).
    If \(A\) is derived \(x\)-complete, then \(y, x\) is also a regular sequence on \(A\).
\end{lemma}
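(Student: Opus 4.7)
The plan is to verify directly the two defining conditions of \(y, x\) being a regular sequence on \(A\): that \(y\) is a non-zero-divisor on \(A\), and that \(x\) is a non-zero-divisor on \(A/yA\). The first will use derived \(x\)-completeness, while the second is purely formal and does not involve the completeness hypothesis at all.

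First I would dispose of the second condition. Given \(xa \in yA\), write \(xa = yb\) and reduce modulo \(x\); since \(y\) is a non-zero-divisor on \(A/xA\), this forces \(b \in xA\). Writing \(b = xb_1\) and cancelling \(x\) (a non-zero-divisor on \(A\)) then yields \(a = yb_1 \in yA\), so \(x\) is a non-zero-divisor on \(A/yA\).

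For the first condition, I would start with an element \(a \in A\) such that \(ya = 0\). The same reduction modulo \(x\) gives \(a \in xA\); writing \(a = xa_1\) and cancelling \(x\) yields \(ya_1 = 0\). Iterating this procedure produces a compatible sequence \(a_0 = a, a_1, a_2, \dots\) in \(A\) satisfying \(xa_{n+1} = a_n\) for every \(n \geq 0\), together with \(ya_n = 0\). Such a sequence is precisely the data of an \(A\)-linear map \(A[x^{-1}] \to A\) sending \(x^{-n} \mapsto a_n\). I would then invoke the standard characterization of derived \(x\)-completeness to obtain \(\Hom_A(A[x^{-1}], A) = 0\), which forces \(a = a_0 = 0\) and shows that \(y\) is a non-zero-divisor on \(A\).

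The only nontrivial point, and therefore the main obstacle, is appealing to the correct characterization of derived completeness: for the principal ideal \((x)\), derived \(x\)-completeness of \(A\) implies the vanishing \(\Hom_A(A[x^{-1}], A) = 0\), i.e., \(A\) admits no nonzero system of infinitely \(x\)-divisible coherent elements. This is standard (see e.g.\ \citeSta{091T}), and once it is in hand the construction above exhibits any annihilator of \(y\) as such a divisible system, forcing it to vanish.
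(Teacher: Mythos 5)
Your proof is correct, but it takes a more self-contained route than the paper. The paper's proof is essentially a two-step citation: since \(x\) is a non-zero-divisor, derived \(x\)-completeness upgrades to \(x\)-adic completeness (hence separatedness) by \cite[Lemma III.2.4]{bhatt2018Geometric}, and then the permutability statement is quoted from \cite[Corollary 7.8.8 (i)]{gabber2022Almost}. You instead prove both conditions directly: the formal step that \(x\) is a non-zero-divisor on \(A/yA\) is the same computation that underlies the cited result, and for the key step you observe that an annihilator of \(y\) generates a coherently \(x\)-divisible system \((a_n)\) with \(xa_{n+1}=a_n\) (note that each \(a_{n+1}\) is even uniquely determined, as \(x\) is a non-zero-divisor), i.e.\ a map \(A[x^{-1}]\to A\), and then invoke the standard characterization that derived \(x\)-completeness forces \(\mathrm{Hom}_A(A[x^{-1}],A)=0\). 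This is a valid and arguably sharper argument: it never passes through classical \(x\)-adic separatedness (which derived completeness alone would not give without the torsion-freeness input), but uses only the weaker vanishing of coherent divisible elements, and it makes the lemma independent of the two external references. The paper's approach buys brevity by outsourcing exactly this divisibility trick to Gabber--Ramero; mathematically the two arguments rest on the same idea.
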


\begin{proof}
    Since \(A\) is \(x\)-torsion-free, \(A\) is \(x\)-adically complete by \cite[Lemma III.2.4]{bhatt2018Geometric} and, in particular, \(A\) is \(x\)-adically separated.
    By \cite[Corollary 7.8.8 (i)]{gabber2022Almost}, the sequence \((y, x)\) is a regular sequence on \(A\).
\end{proof}

\begin{lemma}[{\cite[Corollary 7.8.8 (ii)]{gabber2022Almost} and \citeSta{07DV}}] \label{PowerRegularSequence}
    Let \(A\) be a ring and let \(f_1, \dots, f_r\) be a sequence of elements of \(A\).
    Let \(e_1, \dots, e_r\) be positive integers.
    Then \(f_1, \dots, f_r\) is a regular sequence (resp., Koszul-regular sequence) on \(A\) if and only if \(f_1^{e_1}, \dots, f_r^{e_r}\) is a regular sequence (resp., Koszul-regular sequence) on \(A\).
\end{lemma}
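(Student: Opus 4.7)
The plan is to reduce the assertion, in either version and in either direction, to the single-position claim that \((g, h_2, \dots, h_r)\) is a regular (resp.\ Koszul-regular) sequence on \(A\) if and only if \((g^n, h_2, \dots, h_r)\) is, for every positive integer \(n\). Once this is established, iterating over each position yields the full statement: immediately for the Koszul-regular case thanks to the invariance of Koszul-regularity under permutations of the generators (\citeSta{0663}), and position by position for the classical case by modding out by \(f_1^{e_1}\) and invoking the result inductively on \(r\) for the tail modulo \(f_1^{e_1}\).

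For the regular-sequence version, the direction ``powers imply originals'' is elementary: \(g^n\) being a non-zero-divisor forces \(g\) to be one, and combined with induction on \(r\) this handles the sequence case. For the converse I would argue by induction on \(n\), with the key step being the short exact sequence
\[
0 \to A/(g) \xrightarrow{\cdot g^n} A/(g^{n+1}) \to A/(g^n) \to 0,
\]
whose left map is injective because \(g\) is a non-zero-divisor. Since \((h_2, \dots, h_r)\) is regular on each flanking term (by hypothesis and by induction on \(n\)), it is regular on \(A/(g^{n+1})\), and together with \(g^{n+1}\) being a non-zero-divisor this closes the induction.

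For the Koszul-regular version, the key input is the cofiber sequence in \(D(A)\)
\[
\Kos(A; g^{n-1}) \to \Kos(A; g^n) \to \Kos(A; g),
\]
obtained by applying the octahedral axiom to the factorization \(A \xrightarrow{g^{n-1}} A \xrightarrow{g} A\) whose composite is multiplication by \(g^n\). Tensoring with \(\Kos(A; h_2, \dots, h_r)\) and passing to the long exact sequence in cohomology lets me transfer positive-degree acyclicity between the three complexes \(\Kos(A; g^{n-1}, h_2, \dots, h_r)\), \(\Kos(A; g^n, h_2, \dots, h_r)\), and \(\Kos(A; g, h_2, \dots, h_r)\), which drives an induction on \(n\) that treats both directions simultaneously.

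The main obstacle will be the bookkeeping in the Koszul-regular case: one must correctly identify the cofiber sequence above and carefully propagate vanishing along the long exact sequence, being mindful of the boundary index where acyclicity could fail. Nothing here goes beyond routine manipulations in \(D(A)\), which is consistent with the fact that the lemma is already recorded in \cite[Corollary 7.8.8 (ii)]{gabber2022Almost} and \citeSta{07DV}.
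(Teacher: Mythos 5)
The paper itself does not prove this lemma at all (it is quoted from Gabber--Ramero and the Stacks Project), so your attempt is a self-contained proof, and as written it only establishes one of the two implications. Your global reduction is fine: the single-position claim plus permutation invariance of Koszul-regularity, resp.\ plus passing to \(A/(f_1^{e_1})\) and inducting on \(r\), does reduce the lemma to the statement that \((g, h_2, \dots, h_r)\) is regular (resp.\ Koszul-regular) if and only if \((g^n, h_2, \dots, h_r)\) is. Likewise your treatment of ``originals imply powers'' is correct in both settings. The gap is in the direction you dismiss as elementary, ``powers imply originals'', which is in fact where all the content of the lemma lies.

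For regular sequences, the point is not that \(g^n\) being a non-zero-divisor forces \(g\) to be one (that is trivial); it is that you must show \(h_2, \dots, h_r\) is a regular sequence on \(A/(g)\) knowing only that it is one on \(A/(g^n)\). Regularity does not pass from a module to a quotient, and the ``induction on \(r\)'' you invoke does not close: the natural inductive step would require \(g\) to be a non-zero-divisor modulo \((h_2, \dots, h_{i-1})\), i.e.\ a permutation statement, which fails for general rings (and is precisely why the neighbouring results in Gabber--Ramero carry separatedness hypotheses). A correct argument needs a genuine input: for instance, multiply a relation \(h_i x \in (g, h_2, \dots, h_{i-1})\) by \(g^{n-1}\) to get \(g^{n-1}x \in (g^n, h_2, \dots, h_{i-1})\), and then prove the colon identity \((g^n, h_2, \dots, h_{i-1}) : g^{n-1} = (g, h_2, \dots, h_{i-1})\) for the regular sequence \(g, h_2, \dots, h_{i-1}\), which itself requires a careful peeling argument or an appeal to quasi-regularity; none of this is in your sketch.

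For Koszul-regularity, the cofiber sequence \(\Kos(A; g^{n-1}) \to \Kos(A; g^n) \to \Kos(A; g)\) is correct, but its long exact sequence does not ``treat both directions simultaneously''. If only the middle term is known to be acyclic in positive degrees, all you get are injections \(H_i(\Kos(A; g, h_2, \dots, h_r)) \hookrightarrow H_{i-1}(\Kos(A; g^{n-1}, h_2, \dots, h_r))\) for \(i \geq 1\), whose targets are unknown (and for \(i = 1\) the target is an \(H_0\), which is nonzero in general), so the proposed induction on \(n\) has nothing to run on. A repair that stays inside your framework: also use the octahedral triangle for the other factorization \(g^n = g^{n-1}\cdot g\), namely \(\Kos(A; g) \to \Kos(A; g^n) \to \Kos(A; g^{n-1})\); combining the two long exact sequences, positive-degree acyclicity of the middle terms yields \(H_i \cong H_{i+2}\) for \(\Kos(A; g, h_2, \dots, h_r)\) in all degrees \(i \geq 1\), and since this complex is bounded its positive homology vanishes. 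As it stands, though, both ``powers imply originals'' steps are missing, so the proposal proves only the easy half of the lemma.
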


The following lemma is similar to \cite[Lemma 3.3]{anschutz2020Pcompleted} and \cite[Lemma 2.7]{anschutz2023Prismatic}.

\begin{lemma} \label{EquivRegularSeq}
    Let $(A,I)$ be a $\delta$-pair. 
    Suppose that $A$ is derived $(p, I)$-complete and $I$ is generated by a distinguished element $d$.
    Then the following conditions are equivalent. 
\begin{enumerate}
    \item $p,d$ is a regular sequence on \(A\).
    \item $d,p$ is a regular sequence on \(A\). 
    \item $\varphi(d), d$ is a regular sequence on \(A\). 
    \item $d, \varphi(d)$ is a regular sequence on \(A\). 
    \item $p, \varphi(d)$ is a regular sequence on \(A\). 
    \item $\varphi(d), p$ is a regular sequence on \(A\). 
    \item $p, \varphi^{i+1}(d)$ is a regular sequence for every $i\geq 0$ on \(A\).
    \item $\varphi^{i+1}(d), p$ is a regular sequence for every $i\geq 0$ on \(A\). 
    \item $\varphi^{i+1}(d), \varphi^{j+1}(d)$ is a regular sequence for every $i,j \geq 0$, $i \neq j$ on \(A\).
\end{enumerate}

    If one of the equivalent conditions is satisfied, $A$ is $(p, I)$-adically complete.
\end{lemma}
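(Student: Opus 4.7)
The plan is to link the nine conditions through three tools: the permutation Lemma \ref{PermutableDerivedComplete}, the power-substitution Lemma \ref{PowerRegularSequence}, and the distinguished-element identity $\varphi(a) = a^p + p\delta(a)$ with $\delta(a) \in A^\times$. As a preliminary observation, $\varphi^k(d)$ is again distinguished for every $k \geq 0$ because $\delta \circ \varphi = \varphi \circ \delta$ on any $\delta$-ring and $\varphi$ preserves units; moreover, $\varphi^k(d) \equiv d^{p^k} \pmod p$ implies that $(p,d)$ and $(p, \varphi^k(d))$ share the same radical, so $A$ is derived complete with respect to each of $p$, $d$, and $\varphi^k(d)$. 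This gives the four permutation equivalences (1)$\Leftrightarrow$(2), (3)$\Leftrightarrow$(4), (5)$\Leftrightarrow$(6), (7)$\Leftrightarrow$(8) directly from Lemma \ref{PermutableDerivedComplete}.

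The remaining equivalences inside (1)--(8) come from two ideal identities. First, $(p) = (\varphi(d))$ in $A/d$, because $\varphi(d) \equiv p\delta(d) \pmod d$ with $\delta(d)$ a unit; this yields (2)$\Leftrightarrow$(4). Second, $(p) = (d^p)$ in $A/\varphi(d)$, because $d^p \equiv -p\delta(d) \pmod{\varphi(d)}$; combined with the fact that $d$ and $d^p$ have the same non-zero-divisor status modulo $\varphi(d)$, this yields (3)$\Leftrightarrow$(6). For (5)$\Leftrightarrow$(7), I use $\varphi^{i+1}(d) \equiv \varphi(d)^{p^i} \pmod p$ to get $(\varphi^{i+1}(d)) = (\varphi(d)^{p^i})$ in $A/p$, and Lemma \ref{PowerRegularSequence} provides the equivalence for each $i$.

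The heart of the proof is the equivalence with (9). The direction (9)$\Rightarrow$(6) is immediate from the case $i=0, j=1$: modulo $\varphi(d)$ we have $\varphi^2(d) \equiv p\varphi(\delta(d))$ with $\varphi(\delta(d)) = \delta(\varphi(d))$ a unit, so regularity of $\varphi(d),\varphi^2(d)$ forces regularity of $\varphi(d),p$. For (8)$\Rightarrow$(9), fix $i < j$, put $e \defeq \varphi^{i+1}(d)$ (a distinguished element) and $m \defeq j-i$, and aim to show $\varphi^m(e)$ is a non-zero-divisor on $A/e$. The key is an induction on $m$ producing a decomposition $\varphi^m(e) = p B_m + e C_m$ in $A$ such that $B_m \equiv \delta(e)^{p^{m-1}} \pmod{(p,e)}$ and $C_m \equiv 0 \pmod{(p,e)}$. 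The base case $m=1$ is just $\varphi(e) = p\delta(e) + e \cdot e^{p-1}$; the inductive step uses the expansion $\varphi^{m+1}(e) = p(\varphi(B_m)+\delta(e)\varphi(C_m)) + e^p \varphi(C_m)$ together with the Frobenius-of-sum identity $(x+y+z)^p \equiv x^p + y^p + z^p \pmod p$ to propagate both congruences. Since $\delta(e) = \varphi^{i+1}(\delta(d))$ is a unit and $p \in \Jac(A/e)$ by \Cref{LemmaDerivedComplete} (as $A/e$ is derived $p$-complete), $B_m$ lifts to a unit in $A/e$. Hence $(\varphi^m(e)) = (p)$ in $A/e$, and because (8) gives $p$ as a non-zero-divisor on $A/e$, so is $\varphi^m(e)$. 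The symmetric case $i>j$ follows from Lemma \ref{PermutableDerivedComplete}. The main obstacle here is the inductive bookkeeping that keeps the correction term $C_m$ inside $(p,e)$ after applying $\varphi$ and recombining.

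Finally, for $(p,I)$-adic completeness, any equivalent condition implies (1), so $p$ is a non-zero-divisor on $A$; combined with derived $p$-completeness this gives $p$-adic completeness of $A$. By Lemma \ref{PowerRegularSequence}, $d$ is a non-zero-divisor on each $A/p^n$, and each $A/p^n$ is derived $d$-complete, hence $d$-adically complete. Taking the double limit $A = \lim_n \lim_k A/(p^n, d^k)$ then yields $(p,d)$-adic completeness.
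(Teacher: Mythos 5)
Your proof is correct, and for the equivalence of (1)--(8) it follows essentially the same path as the paper: derived $(p,I)$-completeness plus \Cref{PermutableDerivedComplete} gives the four permutation equivalences, and the identity $\varphi(d)=d^p+p\delta(d)$ with $\delta(d)$ a unit, combined with \Cref{PowerRegularSequence}, supplies the cross-links (the paper uses (2)$\Leftrightarrow$(4) and (1)$\Leftrightarrow$(5) and then iterates (1)$\Rightarrow$(5) to get (7); you use (2)$\Leftrightarrow$(4), (3)$\Leftrightarrow$(6) and (5)$\Leftrightarrow$(7) via $\varphi^{i+1}(d)\equiv\varphi(d)^{p^i}\bmod p$ --- a cosmetic difference). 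The genuine divergence is at condition (9) and the final completeness claim. The paper obtains (1)$\Rightarrow$(9) by citing \cite[Lemma 2.7]{anschutz2023Prismatic} and the $(p,I)$-adic completeness by citing the paragraph preceding \cite[Lemma 3.3]{anschutz2020Pcompleted}, whereas you prove both from scratch: your induction producing $\varphi^m(e)=pB_m+eC_m$ with $B_m\equiv\delta(e)^{p^{m-1}}$ and $C_m\equiv 0$ modulo $(p,e)$ is correct (the propagation of both congruences through $\varphi^{m+1}(e)=p(\varphi(B_m)+\delta(e)\varphi(C_m))+e^p\varphi(C_m)$ checks out), and together with $p\in\Jac(A/e)$ from \Cref{LemmaDerivedComplete} it gives $(\varphi^m(e))=(p)$ in $A/e$, hence (8)$\Rightarrow$(9); likewise your torsion-freeness-plus-double-limit argument is exactly the content of the cited paragraph of \cite{anschutz2020Pcompleted}. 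What your route buys is self-containedness --- the hardest implication no longer rests on an external lemma --- at the cost of the inductive bookkeeping; one small presentational point is that where the paper invokes \cite[Lemma 2.25]{bhatt2022Prismsa} together with the Zariskian property to see that each $\varphi^{i+1}(d)$ is distinguished, you use the identity $\delta\circ\varphi=\varphi\circ\delta$, which is true for every $\delta$-ring but deserves a citation or the one-line reduction to the $p$-torsion-free free $\delta$-ring.
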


\begin{proof}
    We show the following implications:
    \begin{center}
        \begin{tikzcd}
            (6)    & (5) \arrow[l, Leftrightarrow] & (9) \arrow[rd, Rightarrow] &                                    \\
            (7) \arrow[d, Leftrightarrow] & (1) \arrow[r, Leftrightarrow] \arrow[u, Leftrightarrow] \arrow[l, Rightarrow] \arrow[ru, Rightarrow] & (2) \arrow[r, Leftrightarrow]  & (4)                                \\
            (8) \arrow[rrr, Rightarrow]    & & & (3). \arrow[u, Leftrightarrow]
            \end{tikzcd}
    \end{center}

    Since $A$ is derived $(p, I)$-complete and $\varphi^{i}(d) \in (p, I)A$ for every $i \geq 0$, $A$ is derived $p$-complete and derived $\varphi^i(d)$-complete by definition (see \cite[Definition III.2.1]{bhatt2018Geometric}). 
    Hence each one of the above regular sequences is permutable by \Cref{PermutableDerivedComplete} and thus the equivalences \IFF{(1)}{(2)}, \IFF{(3)}{(4)}, \IFF{(5)}{(6)}, and \IFF{(7)}{(8)} hold.
    Moreover, because of $\varphi(d)=d^{p}+p\delta(d)$ with an invertible element $\delta(d)$, we have the equivalences \IFF{(2)}{(4)} and \IFF{(1)}{(5)} by \Cref{PowerRegularSequence}.
    Consequently, the conditions $(1)$-$(6)$ are equivalent.

    Since the derived \((p, I)\)-complete ring \(A\) is \((p, I)\)-Zariskian by \Cref{LemmaDerivedComplete}, \(\varphi^{i+1}(d)\) is a distinguished element for any \(i \geq 0\) by \cite[Lemma 2.25]{bhatt2022Prismsa}.
    So we can apply the implication \IMPLIES{(1)}{(5)} for a sequence \(p, \varphi^i(d)\) for each \(i \geq 0\) and then we have \IMPLIES{(1)}{(7)}.
    By \cite[Lemma 2.7]{anschutz2023Prismatic}, we have \IMPLIES{(1)}{(9)}.
    The immediate implications \IMPLIES{(8)}{(3)} and \IMPLIES{(9)}{(4)} conclude the proof.

    If $p,d$ is a regular sequence on $A$, by the paragraph above \cite[Lemma 3.3]{anschutz2020Pcompleted}, this $A$ is $(p,I)$-adically complete.
\end{proof}

\begin{definition}[{cf. \cite[Definition 3.2]{anschutz2020Pcompleted}}] \label{DefofTransversal}
    We say that an orientable\footnote{In \cite[Definition 2.1.3]{bhatt2022Absolute}, we can define the notion of a \emph{transversal prism} for any prism (not necessarily orientable).
    However, we assume that transversal prisms are orientable in this paper.} prism $(A, I)$ is \emph{transversal} if some (or, equivalently, any) orientation $\xi \in I$ forms a regular sequence \(p, \xi\) of $A$ or satisfies one of the equivalent conditions (\Cref{EquivRegularSeq}).  
\end{definition}

Transversal prisms are well-behaved in the following sense (\Cref{TransversalFFlatEquiv}).
We first recall the notion of a \emph{completely faithfully flat} map.

\begin{definition}[{cf. \cite{bhatt2022Prismsa,yekutieli2018Flatness}}] \label{DefofCompFlat}
    Let \(A\) be a ring and \(M\) be an \(A\)-module.
    Fix an ideal \(\mfraka\) of \(A\).
    Then \(M\) is an \emph{\(\mfraka\)-completely flat}\footnote{The original definition of the \(\mfraka\)-completely flatness is that \(M \otimes^L_A N\) is concentrated in degree \(0\) for any \(A/\mfraka\)-module \(N\). These equivalences are shown in \cite[Theorem 4.3]{yekutieli2018Flatness}.} \(A\)-module if the derived tensor product \(M \otimes^L_A A/\mfraka\) is concentrated in degree \(0\) and \(M/\mfraka M\) is a flat \(A/\mfraka\)-module.
    Moreover if \(M/\mfraka M\) is a faithfully flat \(A/\mfraka\)-module, we say that \(M\) is \emph{\(\mfraka\)-completely faithfully flat}.

    A map of rings \(A \to B\) is \emph{\(\mfraka\)-completely (faithfully) flat} if \(B\) is an \(\mfraka\)-completely (faithfully) flat \(A\)-module via this map \(A \to B\).
\end{definition}

If the base ring is Noetherian, the situation is simpler.
To use this fact later (\Cref{NoetherianCase} and \Cref{FFlatGivesRegular}), we formulate the following lemma in a form suitable for our setup:

\begin{lemma} \label{CompletionFaithfullyFlat}
    Let \(R\) be a commutative ring with bounded \(\pi^\infty\)-torsion for some element \(\pi\) in \(R\) and let \(S\) be a \(\pi\)-adically complete \(R\)-algebra.
    If \(R/\pi^nR \to S/\pi^nS\) is flat (resp., faithfully flat) for all \(n \geq 1\), then \(S\) becomes a \(\pi\)-completely flat (resp., \(\pi\)-completely faithfully flat) \(R\)-algebra.
    If \(R\) is furthermore Noetherian (resp., \(\pi\)-Zariskian and Noetherian), then \(S\) is flat (resp., faithfully flat) over \(R\).
\end{lemma}

\begin{proof}
    Since \(R\) has bounded \(\pi^\infty\)-torsion, \(\pi\) is a weakly proregular element\footnote{An element \(a\) of a commutative ring \(R\) is called \emph{weakly proregular} if the inverse system \(\{H^q(\Kos(R; a^i))\}_{i \geq 0}\) of \(R\)-modules is pro-zero for every \(q < 0\). Here, \(H^q(\Kos(R; a^i))\) is the \(q\)-th cohomology of the Koszul complex and its transition morphism \(\Kos(R; a^{i+1}) \to \Kos(R; a^i)\) is given by the multiplication map \(\times a\) in degree \(-1\) and the identity map in degree \(0\). See \cite{yekutieli2021Weak} for more details.
} of \(R\) by \cite[Proposition 5.6]{yekutieli2021Weak}.
    So \(S = \lim_n S/\pi^nS\) is \(\pi\)-completely flat over \(R\) by \cite[Theorem 1.6 (1)]{yekutieli2018Flatness}.
    If \(R/\pi^n R \to S/\pi^n S\) is faithfully flat (in particular, \(n = 1\)), \(S\) is moreover \(\pi\)-completely faithfully flat over \(R\) by the definition of \(\pi\)-completely faithful flatness.

    By using this, if \(R\) is Noetherian and \(R/\pi^n R \to S/\pi^n S\) is flat, then \(S\) is flat over \(R\) by \cite[Theorem 1.6 (2)]{yekutieli2018Flatness}.
    Assume moreover that \(R\) is a \(\pi\)-Zariskian Noetherian ring and \(R/\pi^n R \to S/\pi^n S\) is faithfully flat for all \(n \geq 1\).
    To show that \(S\) is faithfully flat over \(R\), it suffices to show that \(\Spec(S) \to \Spec(R)\) is surjective. We follow the argument in the proof of \cite[Proposition 5.1]{bhatt2018Direct}:
    Since \(R \to S\) is flat, it satisfies the going down property. Thus, it suffices to show that the fiber \(R/\mfrakm \otimes_R S\) of a maximal ideal \(\mfrakm\) of \(R\) is non-zero.
    Since \(R\) is \(\pi\)-Zariskian, this \(\mfrakm\) contains \(\pi\) and hence this fiber is isomorphic to \(R/\mfrakm \otimes_{R/\pi R} S/\pi S\), which is non-zero because \(R/\pi R \to S/\pi S\) is faithfully flat.
\end{proof}

We end this section with the following lemma, which gives the equivalence condition for the completely faithful flatness of the Frobenius lift.

\begin{lemma} \label{TransversalFFlatEquiv}
    Let \((A, I)\) be a transversal or crystalline prism, and let \(\map{\varphi}{A}{\varphi_* A}\) be the Frobenius lift of the \(\delta\)-ring \(A\).
    Consider the following conditions:
    \begin{enumerate}
        \item \(\varphi\) is \(p\)-completely faithfully flat.
        \item \(\varphi\) is \(I\)-completely faithfully flat.
        \item \(\varphi\) is \((p, I)\)-completely faithfully flat.
        \item The Frobenius map \(\map{F}{A/(p)}{F_*(A/(p))}\) is faithfully flat.
        \item The map \(\map{\overline{\varphi}_I}{A/I}{\varphi_*(A/\varphi(I))}\) induced from \(\varphi\) is faithfully flat.
        \item The \(p\)-th power map \(\map{\overline{\varphi}_{(p, I)}}{A/(p, I)}{\varphi_*(A/(p, I^{[p]}))}\) induced from \(\varphi\) is faithfully flat, where \(I^{[p]}\) is the ideal generated by \(\set{f^p \in A}{f \in I}\) in \(A\), which is called the \emph{Frobenius power} of \(I\).
    \end{enumerate}
    Then we have \IFF{(1)}{(4)}, \IFF{(2)}{(5)}, and \IFF{(3)}{(6)}.
    If moreover \(A\) is Noetherian, then all of the above conditions are equivalent.
\end{lemma}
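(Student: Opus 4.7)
I will establish the equivalences in two phases: identify conditions (1)--(3) with (4)--(6) pairwise, then close the loop (4)$\Leftrightarrow$(5)$\Leftrightarrow$(6). The crystalline case $I = (p)$ is essentially tautological---one checks $\varphi(p) = p$ in any $\delta$-ring (using $\delta(p) = 1 - p^{p-1}$), so $\varphi(I) = I$ and $I^{[p]} \subseteq (p)$, and all six conditions collapse to the single statement that $F : A/p \to F_*(A/p)$ is faithfully flat. Henceforth assume $(A, I)$ is transversal with orientation $\xi$.

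The first phase hinges on Koszul computations. Since $A$ acts on $\varphi_* A$ through $\varphi$, multiplication by $p$ is multiplication by $\varphi(p) = p$ and multiplication by $\xi$ is multiplication by $\varphi(\xi) = \xi^p + p\delta(\xi)$. By \Cref{EquivRegularSeq}, $p$, $\varphi(\xi)$, and the pair $p, \varphi(\xi)$ are each regular on $A$, so Koszul resolutions of $A/p$, $A/I$, $A/(p, I)$ tensored with $\varphi_* A$ remain concentrated in degree zero. Combined with $(p, \varphi(\xi)) = (p, \xi^p) = (p, I^{[p]})$ (since $p\delta(\xi) \in (p)$), this reads off
\begin{equation*}
\varphi_* A \otimes^L_A A/p = F_*(A/p), \quad \varphi_* A \otimes^L_A A/I = \varphi_*(A/\varphi(I)A), \quad \varphi_* A \otimes^L_A A/(p, I) = \varphi_*(A/(p, I^{[p]})),
\end{equation*}
and (1)$\Leftrightarrow$(4), (2)$\Leftrightarrow$(5), (3)$\Leftrightarrow$(6) follow directly from \Cref{DefofCompFlat}.

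For the loop among (4), (5), (6): the implications (4)$\Rightarrow$(6) and (5)$\Rightarrow$(6) are immediate base changes along $A/p \twoheadrightarrow A/(p, I)$ and $A/I \twoheadrightarrow A/(p, I)$, where in the first case $\xi$ acts on $F_*(A/p)$ via $\xi^p$. For the reverse, I would apply \Cref{CompletionFaithfullyFlat}: to deduce (4), take $R = A/p$, $S = F_*(A/p)$, $\pi = \xi$, and symmetrically take $R = A/I$, $S = A/\varphi(I)A$, $\pi = p$ for (5). In each case transversality makes both $R$ and $S$ $\pi$-adically complete and $\pi$-torsion-free; (6) provides mod-$\pi$ faithful flatness, flatness at each higher level $R/\pi^n \to S/\pi^n S$ propagates via \citeSta{0912}, and faithful flatness at each level then follows because $\mathrm{Spec}(R/\pi^n) = \mathrm{Spec}(R/\pi)$ as sets.

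The main obstacle is the non-Noetherian setting: \Cref{CompletionFaithfullyFlat} yields only ``$\pi$-completely faithfully flat'' in general, while (4) and (5) demand honest faithful flatness. To bridge the gap I would exploit that $A$ is $(p, I)$-Zariskian (\Cref{LemmaDerivedComplete}), so every maximal ideal of each source ring contains the image of $\xi$ (respectively $p$); surjectivity on prime spectra can then be read off modulo $\xi$ (resp.\ $p$), where (6) directly applies, and going-down for flat ring maps promotes the $\pi$-complete faithful flatness to the honest faithful flatness required.
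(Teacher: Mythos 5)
Your first phase is sound and is in fact the same computation that drives the paper's proof: using the regular sequences provided by \Cref{EquivRegularSeq}, the derived reductions \(\varphi_*A \otimes^L_A A/(p)\), \(\varphi_*A \otimes^L_A A/I\), and \(\varphi_*A \otimes^L_A A/(p,I)\) are discrete and identify with \(F_*(A/(p))\), \(\varphi_*(A/\varphi(I))\), and \(\varphi_*(A/(p,I^{[p]}))\), giving (1)\(\Leftrightarrow\)(4), (2)\(\Leftrightarrow\)(5), (3)\(\Leftrightarrow\)(6); the base changes (4)\(\Rightarrow\)(6), (5)\(\Rightarrow\)(6) and the crystalline collapse are also fine. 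The divergence is in how the loop is closed. The paper reduces everything to the single completely-flat implication (6)\(\Rightarrow\)(3), which after the discreteness computation is immediate, since the faithful flatness demanded by \Cref{DefofCompFlat} for the ideal \((p,I)\) is literally condition (6); no limit argument is ever run, and the remaining passages among (1), (2), (3) are disposed of as formal consequences of the inclusions \((p), I \subseteq (p,I)\). You instead try to derive the honest faithful flatness statements (4) and (5) directly from (6) by d\'evissage and \Cref{CompletionFaithfullyFlat}.

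That is where the gap lies. Your d\'evissage plus \Cref{CompletionFaithfullyFlat} yields only that \(F_*(A/(p))\) is \(\xi\)-completely faithfully flat over \(A/(p)\) (respectively that \(\varphi_*(A/\varphi(I))\) is \(p\)-completely faithfully flat over \(A/I\)), and neither \(A/(p)\) nor \(A/I\) is assumed Noetherian: the Noetherian hypothesis in the last assertion of \Cref{CompletionFaithfullyFlat} is precisely what converts complete flatness into flatness, and without it the two notions genuinely differ (this is why the completely-flat notions are used throughout the prismatic literature in the first place). Your proposed bridge --- \((p,I)\)-Zariskian-ness, reading surjectivity on spectra modulo \(\xi\) (resp.\ \(p\)), and going-down --- only addresses the ``faithful'' half: it can upgrade a flat map to a faithfully flat one, but it produces no flatness of \(A/(p) \to F_*(A/(p))\) or of \(A/I \to \varphi_*(A/\varphi(I))\) to begin with. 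So (6)\(\Rightarrow\)(4) and (6)\(\Rightarrow\)(5) are not established by your argument; note that, given your first phase, these are exactly the implications (3)\(\Rightarrow\)(1),(2) that the paper treats as formal, so to repair your write-up you must either supply a genuine argument for that passage or restructure along the paper's lines and prove only (6)\(\Rightarrow\)(3).
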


\begin{proof}
    The implications \IMPLIES{(1)}{(4)}, \IMPLIES{(2)}{(5)}, \IMPLIES{(3)}{(6)}, and \IMPLIES{((4) or (5))}{(6)} are clear.
    It suffices to show that \IMPLIES{(4)}{(1)}, \IMPLIES{(5)}{(2)}, and \IMPLIES{(6)}{(3)}.
    Note that the transversal or crystalline assumption is only used in the proof of them.

    Since the proofs are similar, we only show \IMPLIES{(6)}{(3)}.
    By the definition of \((p, I)\)-completely faithful flatness (\Cref{DefofCompFlat}), it is sufficient to show that the derived tensor product \(A/(p, I) \otimes^L_A \varphi_*A\) is concentrated in degree \(0\).

    If \((A, I)\) is transversal, a fixed orientation \(\xi\) of \((A, I)\) gives a regular sequence \(p, \xi\) on \(A\).
    We have a projective resolution of \(A/(p)\) (resp., \(A/(p, \xi)\)):
    \begin{align*}
        0 \to A & \xrightarrow{\times p} A \to A/(p) \to 0 \\
        (\text{resp.,}\  0 \to A/(p) & \xrightarrow{\times \xi} A/(p) \to A/(p, \xi) \to 0)
    \end{align*}
    as an \(A\)-module (resp., an \(A/(p)\)-module).
    Note that there is a canonical isomorphism of derived tensor products
    \begin{equation*}
        A/(p, I) \otimes^L_{A} \varphi_* A \cong A/(p, I) \otimes^L_{A/(p)} (A/(p) \otimes^L_{A} \varphi_* A)
    \end{equation*}
    in \(D(A/(p))\) by \citeSta{06Y6}.
    First, we have a (quasi-)isomorphism in \(D(A)\);
    \begin{align*}
        A/(p) \otimes^L_{A} \varphi_* A \cong & (0 \to A \otimes_{A} \varphi_* A \xrightarrow{\times p \otimes \id_{\varphi_* A}} A \otimes_{A} \varphi_* A \to 0) \\
        \cong & (0 \to \varphi_* A \xrightarrow{\times \varphi(p)} \varphi_* A \to 0)
    \end{align*}
    Since \(A\) is \(p\)-torsion-free, this complex \(A/(p) \otimes^L_A \varphi_* A\) is concentrated in degree \(0\) and isomorphic to \(\varphi_*(A/(p))\) in \(D(A)\).
    Second, we have a (quasi-)isomorphism in \(D(A/(p))\);
    \begin{align*}
        A/(p, \xi) \otimes^L_{A/(p)} (A/(p) \otimes^L_{A} \varphi_* A) & \\
        \cong (0 \to A/(p) \otimes_{A/(p)} (\varphi_*(A/(p))) & \xrightarrow{\times \xi \otimes \id_{\varphi_*(A/(p))}} A/(p) \otimes_{A/(p)} (\varphi_*(A/(p))) \to 0)\\
        \cong (0 \to \varphi_*(A/(p)) & \xrightarrow{\times \varphi(\xi)} \varphi_*(A/(p)) \to 0)
    \end{align*}
    Similarly as above, this complex is concentrated in degree \(0\) and isomorphic to \(\varphi_*(A/(p, \xi^p))\) in \(D(A/(p))\) and we are done.

    If \((A, I)\) is crystalline, \(I = (p)\) and thus \(A\) is \(p\)-torsion-free.
    So we can show that \(A/(p) \otimes^L_A \varphi_* A\) is concentrated in degree \(0\) as above.

    If \(A\) is Noetherian, it suffices to show that \IMPLIES{(3)}{(1)} and \IMPLIES{(3)}{(2)} but this follows from \cite[Proposition 5.1]{bhatt2018Direct}.
\end{proof}

\section{Regular Prisms} \label{SectionRegularPrisms}

In this section, we define the notion of \emph{regular (local) prisms} and solve a deformation problem of the regularity of \(\delta\)-rings (\Cref{EquivRegularPrism}).
This is a fundamental object when applying the theory of prismatic complexes to commutative ring theory.

For the sake of generality, we collect some lemmas of \(\delta\)-rings.

\begin{lemma}[{cf. \cite[\S 2.5 Exercises 4]{kedlayaNotes} and \cite[Lemma 2.28]{bhatt2022Prismsa}}] \label{pPowerTorsionNil}
    Let \(A\) be a \(\delta\)-ring.
    Then we have the following:
    \begin{enumerate}
        \item If an element \(a\) of \(A\) is \(p^n a = 0\) for some integer \(n > 0\),
        then we have \(p^{n+1} \delta(a) = 0\) in \(A\).
        \item The submodule \(A[p^\infty]\) of \(p^\infty\)-torsion elements of \(A\) is contained in the nilradical \(\Nil(A)\).
    \end{enumerate}
\end{lemma}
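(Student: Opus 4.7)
The plan is to derive (1) by a direct computation with \(\delta(p^n\cdot a)=0\) and then bootstrap (2) from (1) by induction on \(n\). The key computational input is the value of \(\delta\) on powers of \(p\): since \(A\) is a \(\setZ_{(p)}\)-algebra and any ring homomorphism fixes \(\setZ\), we have \(\varphi(p)=p\), which forces \(\delta(p)=1-p^{p-1}\) and, by iterating the Leibniz rule (equivalently, by \(\delta(p^n)=(\varphi(p^n)-p^{np})/p\)), \(\delta(p^n)=p^{n-1}\bigl(1-p^{n(p-1)}\bigr)\). Writing \(u_n\defeq 1-p^{n(p-1)}\), we have \(u_n\equiv 1\pmod{p}\), so \(u_n\) is a unit in \(\setZ_{(p)}\) and hence in \(A\).

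For (1), I would expand
\[
0=\delta(p^n\cdot a)=p^{np}\delta(a)+a^p\delta(p^n)+p\delta(p^n)\delta(a),
\]
substitute \(\delta(p^n)=p^{n-1}u_n\), and multiply through by \(p\). The middle term then becomes \((p^na)\cdot a^{p-1}\cdot u_n=0\), and the two surviving terms collapse, via the trivial identity \(p^{n(p-1)}+(1-p^{n(p-1)})=1\), to
\[
p^{n+1}\delta(a)\cdot\bigl(p^{n(p-1)}+(1-p^{n(p-1)})\bigr)=p^{n+1}\delta(a)=0.
\]

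For (2), I would induct on \(n\). The base case \(n=1\) comes directly from the identity above read \emph{before} multiplying by \(p\): rearranging gives \(a^p(1-p^{p-1})=-p\delta(a)\), and dividing by the unit \(1-p^{p-1}\) places \(a^p\) in \(pA\), so that \(a^{p+1}=a\cdot a^p\in(pa)\cdot A=0\). For the inductive step, the same un-multiplied identity reads \(p^{n-1}\cdot W=0\) with \(W\defeq u_n\varphi(a)+p^{n(p-1)+1}\delta(a)\); applying the inductive hypothesis to the \(p^{n-1}\)-torsion element \(W\) yields \(W\in\Nil(A)\), and since \(u_n\in A^\times\) this forces \(\varphi(a)=a^p+p\delta(a)\in\Nil(A)+pA\), hence \(a^p\in\Nil(A)+pA\). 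Combining this with \((pa)^n=(p^na)\cdot a^{n-1}=0\) (so \(pa\in\Nil(A)\)) gives \(a^{p+1}=a\cdot a^p\in\Nil(A)+(pa)A\subseteq\Nil(A)\), whence \(a\) is nilpotent.

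The main obstacle is engineering the identity so that the magical cancellation \(p^{n(p-1)}+(1-p^{n(p-1)})=1\) delivers (1), and then recognizing that the \emph{same} identity, read before multiplying by \(p\), exhibits \(\varphi(a)\) modulo \(pA\) as a \(p^{n-1}\)-torsion element up to a unit --- allowing the induction on \(n\) to take over for (2). The subtlety is that \(A\) may carry \(p\)-power torsion, so we cannot simply divide the identity through by \(p^{n-1}\) for general \(n\); routing through the inductive hypothesis is precisely the device that bypasses this obstruction.
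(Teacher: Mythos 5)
Your proof is correct, and the overall strategy is the same as the paper's: exploit the forced value of \(\delta(p^n)\) (equivalently, expand \(\delta(p^n a)=0\) by the product rule) and induct on the torsion order \(n\), applying the induction hypothesis to a \(p^{n-1}\)-torsion combination in which the coefficient \(1-p^{n(p-1)}\) is a unit. The tactical differences are worth noting. In part (2) you never invoke part (1): you keep the term \(p^{n(p-1)+1}\delta(a)\) inside \(pA\) and absorb the resulting ambiguity by multiplying by \(a\) and using \(pa\in\Nil(A)\) (from \((pa)^n=0\)), which cleanly yields \(a^{p+1}\in\Nil(A)\). The paper instead isolates \(p\delta(a)\) and asserts it is nilpotent using \(p^{n+1}\delta(a)=0\) from (1); as written that step needs a small supplement (e.g. \(p^{2}\delta(a)\in A[p^{n-1}]\subseteq\Nil(A)\), hence \((p\delta(a))^2\in\Nil(A)\)), so your bookkeeping is slightly tighter at exactly the point where the paper is terse. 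Your base case is also self-contained (giving \(a^{p+1}=0\)), whereas the paper quotes \(\varphi(a)=0\) for \(p\)-torsion elements from Bhatt--Scholze and gets \(a^{p^2}=0\).

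One caveat on justification rather than substance: you derive \(\delta(p)=1-p^{p-1}\) from \(\varphi(p)=p\), ``equivalently'' via \(\delta(p^n)=(\varphi(p^n)-p^{np})/p\). Dividing by \(p\) is exactly what is unavailable here, since \(A\) may have \(p\)-torsion and \(\varphi\) does not determine \(\delta\) in that case. The formula is nevertheless correct in every \(\delta\)-ring: by \(\delta(0)=\delta(1)=0\) and the addition rule, \(\delta\) is forced on the image of \(\setZ_{(p)}\), with \(\delta(m)\) equal to the image of the integer \((m-m^p)/p\); in particular \(\delta(p^n)=p^{n-1}(1-p^{n(p-1)})\). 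With that justification substituted, your argument goes through in full.
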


\begin{proof}
    (1): 
    We have
    \begin{equation*}
        p^{n+1} \delta(a) = p^n (\varphi(a) - a^p) = \varphi(p^n a) - p^n a^p = 0
    \end{equation*}
    and this is what we want.

    (2): If \(a \in A\) is in the submodule \(A[p]\) of \(p\)-torsion elements of \(A\), we have \(p^2 \delta(a) = 0\) by (1).
    Applying \(\delta\) for \(pa = 0\), we have \(a^p + p\delta(a) = \varphi(a) = 0\) as in the proof of \cite[Lemma 2.28 (1)]{bhatt2022Prismsa}.
    Therefore, \(a^{p^2} = (-1)^p p^p \delta(a)^p = 0\) and thus \(a\) is in \(\Nil(A)\).
    By induction, we can assume that the submodule \(A[p^{n-1}] \subset \Nil(A)\) of \(p^{n-1}\)-torsion elements of \(A\) for an integer \(n \geq 2\).
    For any \(a \in A[p^n]\), we have
    \begin{equation*}
        0 = \delta(0) = \delta(p^n a) = \varphi(p^n) \delta(a) + a^p \delta(p^n) = p^n \delta(a) + a^p p^{n-1} (1 - p^{np - n}).
    \end{equation*}
    Since we only consider \(\setZ_{(p)}\)-algebras, \(1 - p^{np - n}\) is an invertible element of \(A\).
    So we have
    \begin{equation*}
        p^{n-1} \parenlr{a^p + \frac{p \delta(a)}{1 - p^{np - n}}} = 0
    \end{equation*}
    and thus \(a^p + p (\delta(a)/(1 - p^{np - n}))\) is in \(\Nil(A)\) by induction hypotheses.
    By our assumption of \(a\), (1) implies that \(p^{n+1}\delta(a) = 0\) and thus \(p \delta(a)\) is also in \(\Nil(A)\).
    Therefore, \(a^p\) is in \(\Nil(A)\).
\end{proof}

\begin{lemma} \label{pTorsionFreePrime}
    Let \(A\) be a \(p\)-adically separated \(\delta\)-ring.
    If \(p A\) is a prime ideal of \(A\), then \(A\) is \(p\)-torsion-free.
\end{lemma}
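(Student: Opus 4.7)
The plan is to combine the previous lemma (\Cref{pPowerTorsionNil}(2)), which says every $p^\infty$-torsion element of a $\delta$-ring is nilpotent, with the observation that the nilradical is contained in every prime ideal. This forces any $p$-power-torsion element to lie in arbitrarily high powers of $p$, and $p$-adic separatedness then finishes the job.

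Concretely, I would argue as follows. Let $a \in A$ satisfy $pa = 0$; the goal is to show $a = 0$. It suffices to prove inductively that $a \in p^n A$ for every $n \geq 1$, because $A$ is $p$-adically separated and hence $\bigcap_n p^n A = 0$. For the base case, $a \in A[p^\infty]$, so by \Cref{pPowerTorsionNil}(2), $a \in \Nil(A)$; since $pA$ is prime, $\Nil(A) \subseteq pA$, whence $a \in pA$.

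For the inductive step, suppose we have written $a = p^n b_n$ for some $b_n \in A$. Then $p^{n+1} b_n = p \cdot a = 0$, so $b_n \in A[p^\infty] \subseteq \Nil(A) \subseteq pA$ by the same reasoning; write $b_n = p b_{n+1}$, so that $a = p^{n+1} b_{n+1}$. This gives $a \in \bigcap_n p^n A = 0$, as desired.

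The argument is short and has no real obstacle; the key conceptual input is \Cref{pPowerTorsionNil}(2), and the only care needed is that $p$-adic separatedness (rather than completeness) is exactly what makes $\bigcap_n p^n A = 0$ and hence closes the induction.
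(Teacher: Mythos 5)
Your argument is correct and is essentially the paper's own proof: both use \Cref{pPowerTorsionNil}(2) to place any $p$-power-torsion element in $\Nil(A) \subseteq pA$, then iterate the division by $p$ (your $b_n$ chain is the paper's $a_1, a_2, \dots$) and conclude from $\bigcap_n p^n A = 0$ by $p$-adic separatedness. No gaps.
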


\begin{proof}
    Take an element \(a \in A\) such that \(pa = 0\).
    Since any \(p^\infty\)-torsion \(A[p^\infty]\) of \(\delta\)-ring \(A\) is contained in the nilradical \(\Nil(A)\) by \Cref{pPowerTorsionNil},
    there exists an integer \(N > 0\) such that \(a^N = 0 \in p A\).
    Since \(p A\) is a prime ideal of \(A\), there exists an element \(a_1 \in A\) such that \(a = p a_1 \in p A\).
    Because of \(0 = pa = p^2 a_1\), there exists an element \(a_2 \in A\) such that \(a_1 = p a_2\) by the same reason.
    Repeating this process, we have
    \begin{equation*}
        A[p] \subseteq \bigcap_{n \geq 0} p^n A = 0
    \end{equation*}
    by the \(p\)-adically separatedness of \(A\) and we finish the proof.
\end{proof}

The following lemma shows that the existence of a \(\delta\)-structure of a local ring \(A\) gives a restriction of its ring structure.
This is a refinement of \cite[Notation 9.3]{bhatt2022Prismatization}. Further properties of \(\delta\)-structures are also shown in \cite[Remark 2.4]{hochster2024Jacobia}.

\begin{lemma} \label{NoDeltaStructure}
    Let \(A\) be a \(p\)-Zariskian ring (not necessarily Noetherian) with the Jacobson radical \(\Jac(A)\).
    If \(A\) has a \(\delta\)-structure, then
    \begin{equation*}
        \delta(\Jac(A)^2) \subseteq \Jac(A).
    \end{equation*}
    In particular, any distinguished element \(d\) does not belong to \(\Jac(A)^2\).
    If the \(\delta\)-ring \(A\) is local with the unique maximal ideal \(\mfrakm\), then \(A\) is unramified, namely, \(p \in \mfrakm \setminus \mfrakm^2\).
\end{lemma}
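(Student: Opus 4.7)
The plan is to establish the three assertions in order, with the first containment $\delta(\Jac(A)^2) \subseteq \Jac(A)$ doing essentially all of the work. The crucial input, available because $A$ is $p$-Zariskian, is that $p \in \Jac(A)$; this lets both defining identities of $\delta$ produce ``error terms'' divisible by $p$ that can be absorbed into $\Jac(A)$. The second assertion will then be immediate, because a distinguished element $d$ with $\delta(d) \in \Jac(A) \neq A$ cannot have $\delta(d)$ invertible. The third assertion will follow by applying the second to $d = p$, for which $\delta(p)$ is easy to compute explicitly.

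For the first containment, I would first dispatch a single product: for $a, b \in \Jac(A)$ the product identity
\begin{equation*}
\delta(ab) = a^p \delta(b) + b^p \delta(a) + p\,\delta(a)\delta(b)
\end{equation*}
places $\delta(ab)$ in $\Jac(A)$, since each of $a^p$, $b^p$, and $p$ lies in $\Jac(A)$. Next I would write a general $x \in \Jac(A)^2$ as a finite sum $x = y_1 + \cdots + y_n$ of such products and induct on $n$. The inductive step uses the additive identity
\begin{equation*}
\delta(y + z) = \delta(y) + \delta(z) - \sum_{i=1}^{p-1} \frac{1}{p}\binom{p}{i}\, y^{i} z^{p-i}
\end{equation*}
for $y, z \in \Jac(A)^2$: by induction $\delta(y), \delta(z) \in \Jac(A)$, while each monomial $y^{i} z^{p-i}$ with $1 \le i \le p-1$ lies in $\Jac(A)^{2p} \subseteq \Jac(A)$ (and $p^{-1}\binom{p}{i} \in \setZ$). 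Hence $\delta(y+z) \in \Jac(A)$, closing the induction.

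The second assertion then drops out: a distinguished $d \in \Jac(A)^2$ would force $\delta(d) \in \Jac(A) \neq A$, contradicting the invertibility of $\delta(d)$. For the local statement, I would compute $\delta(p)$ from $\varphi(p) = p^p + p\,\delta(p)$ together with $\varphi(p) = p$, obtaining $\delta(p) = 1 - p^{p-1}$, which is a unit because $p \in \Jac(A)$. Thus $p$ is distinguished, so $p \notin \mfrakm^2$ by the previous step, while $p \in \mfrakm$ holds by $p$-Zariskianness.

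I do not anticipate a genuine obstacle here; the only subtlety to keep in mind is that $\delta$ is \emph{not} a homomorphism of additive groups, which is precisely why the statement must be strengthened from $\Jac(A)$ to $\Jac(A)^2$: only for arguments in $\Jac(A)^2$ does the nonlinear correction $p^{-1}\bigl(y^p + z^p - (y+z)^p\bigr)$ stay inside $\Jac(A)$ throughout the induction.
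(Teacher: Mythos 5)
Your argument is correct and is essentially the paper's own proof: induct on the number of products \(f_ig_i\) expressing an element of \(\Jac(A)^2\), use the product rule (together with \(p\in\Jac(A)\)) for a single product, use the addition rule with its correction terms \(p^{-1}\binom{p}{i}y^iz^{p-i}\in\Jac(A)\) for the inductive step, and then conclude via the distinguishedness of \(p\). One step should be repaired: you obtain \(\delta(p)=1-p^{p-1}\) by cancelling \(p\) in \(p=\varphi(p)=p^p+p\delta(p)\), which is only valid if \(p\) is a non-zero-divisor in \(A\); instead compute \(\delta(p)\) directly from the addition formula (by induction \(\delta(n)\) is the image of the integer \((n-n^p)/p\) for all \(n\ge 1\)), so \(\delta(p)=1-p^{p-1}\) holds in every \(\delta\)-ring and is a unit because \(p\in\Jac(A)\) — this is the ``direct calculation'' (or \cite[Example 2.6]{bhatt2022Prismsa}) the paper invokes. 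A small side remark: your closing explanation of why \(\Jac(A)^2\) is needed is slightly off — the additive correction terms already lie in \(\Jac(A)\) whenever \(y,z\in\Jac(A)\); the square is forced by the base case, since \(\delta\) of a single element of \(\Jac(A)\) is unconstrained (distinguished elements live there).
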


\begin{proof}
    Let \(x\) be an element of \(\Jac(A)^2\).
    Let \(\delta\) be a \(\delta\)-structure of \(A\) and let \(f_i\) and \(g_i\) be elements of \(\Jac(A)\) such that \(x = \sum_{i=1}^n f_i g_i\) in \(A\) for some \(n \geq 1\).
    We show that \(\delta(x)\) is in \(\Jac(A)\) by induction of \(n \geq 1\).
    In the case of \(n = 1\), we have
    \begin{equation*}
        \delta(x) = \delta(f_1 g_1) = f_1^p \delta(g_1) + g_1^p \delta(f_1) + p \delta(f_1)\delta(g_1) \in (p, f_1, g_1)A \subseteq \Jac(A)
    \end{equation*}
    since \(A\) is \(p\)-Zariskian and thus \(p \in \Jac(A)\).
    Suppose the claim holds for \(n \geq 1\).
    We can write \(x = f_1 g_1 + y\) where \(y = \sum_{i=2}^n f_i g_i\) and then we have
    \begin{equation*}
        \delta(x) = \delta(f_1 g_1) + \delta(y) + \frac{((f_1 g_1)^p + y^p) - (f_1 g_1 + y)^p}{p} \in \Jac(A).
    \end{equation*}

    Since \(d\) is a distinguished element of \(A\), \(\delta(d)\) is a unit element of \(A\).
    If \(d\) is contained in \(\Jac(A)^2\), the above claim shows \(A^\times \ni \delta(d) \in \Jac(A)\).
    This is a contradiction.
    By a direct calculation or \cite[Example 2.6]{bhatt2022Prismsa}, \(p\) is a distinguished element in the \(p\)-Zariskian \(\delta\)-ring \(A\).
    So if \(A\) is local, it is unramified.
\end{proof}

Our first main result is to prove the deformation problem of the regularity of \(\delta\)-rings as follows.

\begin{proposition} \label{EquivRegularPrism}
    Let \((A, I)\) be a bounded prism (here we do not yet assume that \(A\) is Noetherian).
    Then the following conditions are equivalent:
    \begin{enumerate}
        \item[(1)] \(A\) is a regular ring\footnote{Here, a \emph{regular ring \(R\)} is a Noetherian ring \(R\) whose localization \(R_\mfrakm\) is a regular local ring for any maximal ideal \(\mfrakm\). In this paper, if any \(R_\mfrakm\) is unramified, we say that \(R\) is an \emph{unramified regular ring}.} (resp., regular local ring).
        \item[(1')] \(A\) is an unramified regular ring (resp., unramified regular local ring).
        \item[(2)] \(A/p A\) is a regular ring (resp., regular local ring).
        \item[(3)] \(A/I\) is a regular ring (resp., regular local ring).
    \end{enumerate}

    If one of the equivalent conditions is satisfied, \((A, I)\) is a transversal prism or a crystalline prism.
\end{proposition}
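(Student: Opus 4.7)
The plan is to prove the cycle of implications
\begin{equation*}
(1) \Leftrightarrow (1') \Rightarrow (2) \Rightarrow (1), \qquad (1') \Rightarrow (3) \Rightarrow (1),
\end{equation*}
and then deduce the transversality-or-crystallinity dichotomy from the resulting regular local structure. The focus will be on the local case, which is the main content of the theorem; the non-local version follows by applying the same arguments at each maximal ideal of $A$, together with standard descent of Noetherianness and regularity along localizations of regular rings.

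For (1) $\Leftrightarrow$ (1'): Since $A$ is derived $(p,I)$-complete by boundedness of the prism, \Cref{LemmaDerivedComplete} shows that $A$ is $(p,I)$-Zariskian, hence $p$-Zariskian. If $A$ is local, \Cref{NoDeltaStructure} then forces $A$ to be unramified, giving (1) $\Rightarrow$ (1'); the converse is tautological. For (1') $\Rightarrow$ (2): unramifiedness yields $p \in \mfrakm \setminus \mfrakm^2$, so $p$ extends to a regular system of parameters of $A$ and hence $A/pA$ is regular local. For (1') $\Rightarrow$ (3): in the local setting, the locally free rank one ideal $I$ is free of rank one over $A$, hence principal, generated by a non-zero-divisor $d$ which is an orientation and therefore a distinguished element (cf.\ \Cref{DefPrism}); \Cref{NoDeltaStructure} then gives $d \notin \Jac(A)^2 = \mfrakm^2$, so $d$ extends to a regular system of parameters and $A/I = A/(d)$ is regular local.

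For (2) $\Rightarrow$ (1) (and analogously (3) $\Rightarrow$ (1)), the first step is to upgrade $A$ to a Noetherian ring: $A/(p,I)$ is Noetherian as a quotient of the Noetherian $A/pA$, the ideal $(p,I)$ is finitely generated, and $A$ is classically $(p,I)$-complete (by boundedness, see \cite[Lemma 3.7]{bhatt2022Prismsa}), so the standard Noetherian-descent result for complete rings (\citeSta{05GH}) applies. Next, since $A/pA$ is a regular local domain, $pA$ is prime; together with the $p$-adic separatedness of $A$, \Cref{pTorsionFreePrime} gives that $A$ is $p$-torsion-free. Hence $A$ is Noetherian local with $p \in \mfrakm$ a non-zero-divisor and $A/pA$ regular local, and the classical criterion (lift a regular system of parameters from $A/pA$ and adjoin $p$) yields that $A$ is regular local. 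The implication (3) $\Rightarrow$ (1) proceeds identically, substituting a principal non-zero-divisor generator $d$ of $I$ for $p$; here $d$-torsion-freeness of $A$ is automatic from $I \cong A$.

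For the final dichotomy, consider the regular local case with $I = (d)$: if $d \in pA$, write $d = pu$; both $p$ and $d$ lie in $\mfrakm \setminus \mfrakm^2$, which forces $u \notin \mfrakm$, so $u$ is a unit and $I = (p)$, giving $(A,I)$ crystalline. Otherwise $d \notin pA$, and since $pA$ is prime (again using that $A/pA$ is a domain), $d$ is a non-zero-divisor in $A/pA$, so $p, d$ is a regular sequence and $(A,I)$ is transversal. The main obstacle is expected to be the careful bridging between the $\delta$-structural input (encoded in \Cref{NoDeltaStructure}, which forces unramifiedness and the avoidance of $\mfrakm^2$ by distinguished elements) and the classical theory of regular local rings used to descend regularity from $A/pA$ or $A/I$ back to $A$; the two concrete technical inputs are verifying Noetherianness via \citeSta{05GH} and $p$-torsion-freeness via \Cref{pTorsionFreePrime}.
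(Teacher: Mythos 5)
Your proof is correct and takes essentially the same route as the paper: the same implication scheme, the same key inputs (\Cref{LemmaDerivedComplete}, \Cref{NoDeltaStructure}, \Cref{pTorsionFreePrime}, the Cohen-type Noetherianness criterion \citeSta{05GH}, and the classical criterion that a Noetherian local ring is regular once a non-zero-divisor in the maximal ideal has regular quotient), and the same concluding transversal/crystalline dichotomy. The only, harmless, deviations are that you get Noetherianness by applying \citeSta{05GH} to the ideal \((p, I)\) using classical \((p,I)\)-completeness of a bounded prism where the paper argues with \(pA\) (resp.\ \(I\)) directly, and your case split \(d \in pA\) versus \(d \notin pA\) is a slightly more careful rendering of the paper's \(\xi = p\) versus \(\xi \neq p\).
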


\begin{proof}
    Since \(A\) is derived \((p, I)\)-complete, \(A\) is \((p, I)\)-Zariskian by \Cref{LemmaDerivedComplete}.
    So any maximal ideal \(\mfrakm\) of \(R\) contains \(p\) and \(I\).
    Thus we show the case of local rings.

    In any case (1), (1'), (2), and (3), \(A\) is a local ring, and thus the Picard group of \(A\) is trivial.
    So \(I\) is free of rank \(1\) and we can fix an orientation \(\xi\) of \(I\) which is a non-zero-divisor and a distinguished element of \(A\) by \cite[Lemma 2.25]{bhatt2022Prismsa}.

    (1) \(\Leftrightarrow\) (1'): By \Cref{NoDeltaStructure}, this is clear.

    (1') \(\Rightarrow\) (2): In any unramified regular local ring, \(p\) can be extended to a regular system of parameters. Then \(A/pA\) is a regular local ring by \cite[Proposition 2.2.4]{bruns1998CohenMacaulay}.

    (2) \(\Rightarrow\) (1'): Since \(A\) is \(p\)-adically separated and the regular local ring \(A/pA\) is a Noetherian integral domain, \(A\) is also a Noetherian local \(p\)-torsion-free ring by \citeSta{05GH} and \Cref{pTorsionFreePrime}.
    By \citeSta{00NU}, \(A\) is a regular local ring with a \(\delta\)-structure.
    Then \(A\) is an unramified regular local ring by \Cref{NoDeltaStructure}.

    (3) \(\Rightarrow\) (1): Since \(A\) is \(I\)-adically complete and \(A/I\) is a Noetherian local ring, \(A\) is also a Noetherian local ring as above.
    Since \(I\) is generated by a (length \(1\)) regular sequence on \(A\) and \(A/I\) is a regular local ring, \(A\) is also a regular local ring as above.

    (1) \(\Rightarrow\) (3): Since \(A\) is a regular local ring, it suffices to show that \(\xi\) can be extended to a regular system of parameters of \(A\) by \cite[Proposition 2.2.4]{bruns1998CohenMacaulay}.
    Since \(\xi\) is a distinguished element of the \(\delta\)-ring \(A\), \(\xi\) belongs to \(\mfrakm \setminus \mfrakm^2\) by \Cref{NoDeltaStructure} and this shows the claim.

    If \(\xi = p\), \((A, I)\) is a crystalline prism.
    Assume that \(\xi \neq p\).
    Since \(\xi\) is already a non-zero-divisor of \(A\) and \(A/I\) is a regular local ring, the sequence \(\xi, p\) is a regular sequence on \(A\).
    Then \((A, I)\) is a transversal prism.
\end{proof}

\begin{remark} \label{RemarkImplicationsEquivRegularPrism}
    The assumption that \((A, I)\) is a bounded prism in \Cref{EquivRegularPrism} is somewhat redundant.
    Let \((A, I)\) be a pair of a ring \(A\) (not necessarily admitting a \(\delta\)-structure) and its ideal \(I\).
    To prove the equivalence in \Cref{EquivRegularPrism}, we only use the existence of a bounded prism structure of the pair \((A_\mfrakm, I_\mfrakm)\) for any maximal ideal \(\mfrakm\) even if \((A, I)\) is not a prism.
    
    Furthermore, we only use the following assumptions respectively:    %
    \begin{itemize}
        \item \IFF{(1)}{(1')}: \(A_\mfrakm\) has a \(\delta\)-structure.
        \item \IMPLIES{(1')}{(2)}: This is true for any unramified regular local ring \(A_\mfrakm\) of residue characteristic \(p\).
        \item \IMPLIES{(2)}{(1')}: \(A_\mfrakm\) has a \(\delta\)-structure and is \(p\)-adically complete.
        \item \IMPLIES{(3)}{(1)}: \(A_\mfrakm\) is \(I\)-adically complete and \(I_\mfrakm\) is generated by a regular sequence on \(A_\mfrakm\).
        \item \IMPLIES{(1)}{(3)}: \(A_\mfrakm\) has a \(\delta\)-structure and \(I_\mfrakm\) is a principal ideal generated by a distinguished element of \(A_\mfrakm\).
    \end{itemize}
\end{remark}

\begin{definition} \label{DefofRegularPrism}
    Let \((A, I)\) be a transversal or crystalline prism.\footnote{By \Cref{EquivRegularPrism} and \Cref{RemarkImplicationsEquivRegularPrism}, if \((A, I)\) is only assumed to be a \((p, I)\)-adically complete prism, we can define a \emph{regular prism} \((A, I)\) in the same way. However, any such a regular prism is automatically transversal or crystalline. So the transversal or crystalline assumption does not lose its generality.}
    We say that \((A, I)\) is a \emph{regular (local) prism} if it satisfies one of the equivalent conditions of \Cref{EquivRegularPrism}.
    Furthermore, if \(A\) is a complete local ring, we will say that \((A, I)\) is a \emph{complete regular local prism}.
    Note that a regular local prism \((A, I)\) does not necessarily make a complete local ring \(A\) although \(A\) is \((p, I)\)-adically complete because of boundedness of \((A, I)\).

\end{definition}

\begin{example}
    There exists an orientable prism \((A, I)\) such that \(A\) is a Noetherian ring but not a regular local ring (see \cite[Remark 2.29]{bhatt2022Prismsa}).
    Even if \(A\) is a Noetherian \emph{domain}, such an example exists (see \Cref{LogRegularPrism}).
\end{example}

The above \Cref{EquivRegularPrism} shows the following prismatic interpretation of Cohen's structure theorem.
We conclude that the prismatic structures on an unramified complete regular local ring serve as a ``moduli space'' of regular local rings of residue characteristic \(p\).
This situation presents a commutative algebraic analogue, akin to the case where the prismatic structures on a Witt ring define the \emph{Fargues--Fontaine curve}, which is a ``moduli space'' of untilts of perfectoid field of characteristic \(p\) (see \cite[\S 2.3 p.15]{bhatt2019Perfectoid}).

\begin{corollary} \label{EquivRegularRing} \label{CorrespCompleteRegularLocalRing}
    Fix a (not necessarily perfect) field \(k\) of positive characteristic \(p\) and take the Cohen ring \(C(k)\) of \(k\) equipped with a \(\delta\)-structure (see \Cref{CohenRing}).
    Fix an integer \(d \geq 0\).
    Set a formal power series ring \(A \defeq A(k,d) \defeq C(k)[|T_1, \dots, T_d|]\) equipped with a \(\delta\)-structure given by \(\varphi(T_i) = T_i^p\).
    Then we have a one-to-one correspondence between the following two sets:
    \[
    \begin{gathered}
      \mscrR \defeq \{\text{complete regular local ring \(R\) of dimension \(d\) with residue field \(k\)}\}/\cong \\
      \updownarrow \\
      \mscrI \defeq \set{I \subseteq A}{I~\text{is an ideal of}~A~\text{forming a prism}~(A, I)}/\sim,
    \end{gathered}
    \]
    where the upper equivalence relation \(\cong\) is the isomorphism classes of rings and the lower equivalence relation \(\sim\) is defined by \(I \sim I'\) if and only if there exists an automorphism \(f\) of a ring \(A\) (not necessarily an automorphism of a \(\delta\)-ring) such that \(f(I') = I\).
\end{corollary}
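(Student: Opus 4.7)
The plan is to exhibit mutually inverse maps $\Phi\colon \mscrI \to \mscrR$ and $\Psi\colon \mscrR \to \mscrI$.

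Forward direction: define $\Phi([I]) \defeq [A/I]$. Any ring automorphism $f$ of $A$ with $f(I') = I$ descends to an isomorphism $A/I' \cong A/I$, so $\Phi$ is compatible with $\sim$. To see that $A/I$ lies in $\mscrR$: since $A$ is local, the invertible ideal $I$ is principal, say $I = (\xi)$ with $\xi$ a distinguished non-zero-divisor; by \Cref{EquivRegularPrism} the quotient $A/I$ is a regular local ring; by \Cref{NoDeltaStructure} one has $\xi \in \mfrakm_A \setminus \mfrakm_A^2$, so $\dim(A/I) = \dim A - 1 = d$; and $A/I$ inherits completeness and residue field $k$ from $A$.

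For the backward direction, given $R \in \mscrR$, Cohen's structure theorem produces a surjection $\pi\colon A \twoheadrightarrow R$ whose kernel is principal (because $\dim R = \dim A - 1$ and $A$ is a regular local domain), say $\ker\pi = (f)$ with $f \in \mfrakm_A \setminus \mfrakm_A^2$. The key criterion I will use is: writing $f \equiv \bar{a}\, p + \sum_{i=1}^{d} \bar{b}_i T_i$ in $\mfrakm_A/\mfrakm_A^2 \cong k^{d+1}$, an explicit computation (using $\delta(T_i) = 0$, $\delta(p) \equiv 1 \pmod{p}$, the product rule, and the fact that $\delta$ is additive modulo $\mfrakm_A$ on elements of $\mfrakm_A$) yields
\[
  \delta(f) \equiv \bar{a}^p \pmod{\mfrakm_A},
\]
so $f$ is distinguished if and only if $\bar{a} \neq 0$. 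I can arrange $\bar{a} \neq 0$ as follows. If $R$ has equal characteristic $p$, then $p \in \ker\pi$ and a height count forces $\ker\pi = (p)$, so I take $\xi = p$. If $R$ has unramified mixed characteristic, I pick $\pi$ with $\pi(T_d) = \pi(p)$, so that $T_d - p$ is a distinguished generator of $\ker\pi$. If $R$ has ramified mixed characteristic, then $\pi(p) \in \mfrakm_R^2$ automatically, so any $\pi$ sending $\{T_i\}$ to a basis of $\mfrakm_R/\mfrakm_R^2$ gives a kernel generator $f$ with $\bar{b}_i = 0$ for all $i$ and, since $f \notin \mfrakm_A^2$, $\bar{a} \neq 0$. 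Set $\Psi([R]) \defeq [(\xi)]$. The pair $(A, (\xi))$ is a prism: $(\xi)$ is free of rank one; $A$ is derived $(p, \xi)$-complete because it is derived $\mfrakm_A$-complete (Noetherian and classically $\mfrakm_A$-complete) and derived completeness descends to subideals; the distinguished relation $\varphi(\xi) - \xi^p = p \delta(\xi)$ with $\delta(\xi) \in A^\times$ yields $p \in (\xi, \varphi(\xi))$.

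Well-definedness of $\Psi$ on isomorphism classes and the identity $\Psi \circ \Phi = \id_{\mscrI}$ both reduce to the following lifting assertion: if $\xi, \xi' \in A$ are distinguished with $A/(\xi) \cong A/(\xi')$ as rings, then there is a ring automorphism $\sigma$ of $A$ with $\sigma((\xi')) = (\xi)$. I prove this by lifting any isomorphism $\phi\colon A/(\xi') \xrightarrow{\sim} A/(\xi)$ to a continuous ring endomorphism $\sigma\colon A \to A$, specifying $\sigma(T_i)$ as an arbitrary lift of $\phi(\overline{T_i})$ and, in the mixed characteristic case, extending $\phi$ on the Cohen subring using the uniqueness up to automorphism of Cohen subrings inside complete local rings; $\sigma$ is automatically an automorphism because it is surjective modulo $\mfrakm_A^2$ (hence surjective by completeness and Nakayama) and any surjective endomorphism of a Noetherian ring is injective. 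By construction $\sigma(\xi') \in (\xi)$, and the same argument for $\phi^{-1}$ gives the reverse inclusion, so $\sigma((\xi')) = (\xi)$. The equality $\Phi \circ \Psi = \id_{\mscrR}$ is then immediate. The hard part is the case-by-case construction of a distinguished generator in the mixed characteristic setting, together with the lifting of ring isomorphisms of quotients to automorphisms of $A$, both of which rest on Cohen's structure theorem.
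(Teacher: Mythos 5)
Your overall architecture matches the paper's: the forward map is $I \mapsto A/I$, justified by \Cref{EquivRegularPrism}; the backward map comes from Cohen's structure theorem plus a criterion for the kernel generator to be distinguished; and injectivity is reduced to producing a ring automorphism of $A$ carrying one prism ideal onto the other. Your surjectivity argument is correct and is essentially an unfolding of \Cref{617SatN}: the computation $\delta(f) \equiv \bar{a}^{\,p} \pmod{\mfrakm_A}$ for $f \equiv \bar{a}p + \sum_i \bar{b}_i T_i$ modulo $\mfrakm_A^2$ is valid (using $\delta(T_i)=0$, $\delta(p)\equiv 1$, and $\delta(\mfrakm_A^2)\subseteq \mfrakm_A$ from \Cref{NoDeltaStructure}), and each of your three cases does yield a distinguished generator; the case division is avoidable, since choosing $\pi(T_i)=x_i$ a regular system of parameters and writing $p-f\in\ker\pi$ with $f\in(T_1,\dots,T_d)$ works uniformly, which is exactly the route of \Cref{Envelope} used in the paper.

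The injectivity step, however, has a genuine gap. You lift an abstract isomorphism $\phi\colon A/(\xi')\to A/(\xi)$ to $\sigma\colon A\to A$ by sending each $T_i$ to an \emph{arbitrary} lift of $\phi(\overline{T_i})$ and assert that $\sigma$ is ``surjective modulo $\mfrakm_A^2$,'' hence an automorphism. This is not automatic: lifts are only determined modulo $(\xi)$ and $\xi\notin\mfrakm_A^2$, so a bad choice can collapse the cotangent map. Concretely, take $d=1$, $A=\setZ_p[|T|]$, $\xi=\xi'=T-p$, $\phi=\mathrm{id}$; then $\sigma(T)\defeq p$ is a legitimate lift of $\phi(\overline{T})=\overline{p}$, yet the image of $\sigma$ is $\setZ_p\subsetneq A$. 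To repair this you must show the lifts can be adjusted by multiples of $\xi$ so that the classes of $p,\sigma(T_1),\dots,\sigma(T_d)$ form a basis of $\mfrakm_A/\mfrakm_A^2$ (true, but it needs a short linear-algebra argument), or argue as the paper does: by \Cref{617SatN} the images of $T_1,\dots,T_d$ alone generate the maximal ideal of $A/(\xi)$, so $\sigma$ can be taken to be an explicit $C(k)$-linear substitution by an invertible matrix, whose invertibility is read off on cotangent spaces. A second, smaller, slip: ``the same argument for $\phi^{-1}$ gives the reverse inclusion'' is a non sequitur, since the lift of $\phi^{-1}$ need not be $\sigma^{-1}$; instead, once $\sigma$ is an automorphism, $\sigma(\xi')\in(\xi)$ together with $\sigma(\xi')\notin\mfrakm_A^2$ (or with primality of $\sigma(\xi')$) forces $\sigma(\xi')=u\xi$ for a unit $u$, hence $\sigma((\xi'))=(\xi)$. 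On the positive side, your explicit handling of the coefficient ring (which really rests on the lifting property of Cohen rings along surjections of complete local rings, rather than mere uniqueness up to isomorphism) addresses a compatibility on $C(k)$ that the paper's diagram check, performed only on the $T_i$, leaves implicit.
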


\begin{proof}
    Since \(A\) is an (unramified) complete regular local ring, an ideal \(I\) of \(A\) forming a prism \((A, I)\) makes a complete regular local prism \((A, I)\) by \Cref{EquivRegularPrism}.
    So the operation
    \begin{align*}
        \mscrI & \longrightarrow \mscrR, \\
        I & \longmapsto A/I
    \end{align*}
    defines a map of sets.

    (Surjectivity): Let \(R\) be a complete regular local ring of dimension \(d\) with residue field \(k\).
    Fix a regular system of parameters \(x_1, \dots, x_d\) of \(R\).
    By Cohen's structure theorem, there exists a surjective map of rings \(A \twoheadrightarrow R\) such that \(T_i \mapsto x_i\).
    As in \Cref{Envelope} below, the kernel \(I \defeq \ker(A \twoheadrightarrow R)\) forms a prism \((A, I)\) because of \(\dim(A/I) = \dim(A) - 1 = d = \dim(R)\).
    This shows the surjectivity.
    
    (Injectivity): Let \(I\) and \(I'\) be ideals of \(A\) forming prisms \((A, I)\) and \((A, I')\) respectively.
    Assume that \(R \defeq A/I \cong A/I'\).
    It suffices to show that there exists \(f \in \Aut(A)\) such that \(f(I') = I\).
    We define two surjective maps:
    \begin{align*}
        \pi & \colon A \twoheadrightarrow A/I = R, \quad T_i \mapsto x_i, \\
        \pi' & \colon A \twoheadrightarrow A/I' \cong R, \quad T_i \mapsto x_i'.
    \end{align*}
    Since \(\mfrakm_R = (x_1, \dots, x_d)R = (x_1', \dots, x_d')R\), there exists a \(d \times d\)-matrix \(M = (m_{ij})\) over \(A\) such that
    \begin{equation*}
        x_i' = \sum_{k=1}^d \pi(m_{ik}) x_k \in R
    \end{equation*}
    for \(1 \leq i \leq d\).
    Considering the cotangent space \(\mfrakm_R/\mfrakm_R^2 = k x_1 \oplus \dots \oplus k x_d = k x'_1 \oplus \dots \oplus k x'_d\) of \(R\), the image of \(M\) in the set \(M_d(k)\) of \(d \times d\)-matrices over \(k\) via \(A \xrightarrow{\pi} R = A/I \xrightarrow{x_i \mapsto 0} C(k) \twoheadrightarrow C(k)/pC(k) \cong k\) is a change-of-basis matrix on \(\mfrakm_R/\mfrakm_R^2\) and thus invertible in \(M_d(k)\).
    In particular, \(M\) is also invertible in \(M_d(A)\) since \(A\) is local.
    This gives an automorphism \(f \in \Aut(A)\) such that
    \begin{equation*}
        f(T_i) \defeq \sum_{k=1}^d m_{ik} T_k \in A
    \end{equation*}
    for \(1 \leq i \leq d\).
    To prove \(f(I') = I\), it suffices to show that \(f\) induces a commutative diagram:
    \begin{center}
        \begin{tikzcd}
            A \arrow[d, "f"] \arrow[d, "\cong"'] \arrow[r, "\pi'", two heads] & R \arrow[d, Rightarrow, no head] \\
            A \arrow[r, "\pi", two heads]                                     & R.                               
        \end{tikzcd}
    \end{center}
    This is deduced from a direct calculation:
    \begin{equation*}
        \pi'(T_i) = x_i' = \sum_{k=1}^d \pi(m_{ik}) x_k = \pi\parenlr{\sum_{k=1}^d m_{ik} T_k} = \pi(f(T_i)).
    \end{equation*}
\end{proof}

\section{Faithful Flatness of Prismatic Complexes} \label{SectionPrismaticComplexes}

In this section, we first recall the definition of prismatic complexes and their properties based on \cite{bhatt2022Prismsa,bhatt2022Absolute}.
We also prove a main theorem (\Cref{AnimatedLCIPrism}) which is used in the proof of prismatic Kunz's theorem (\Cref{AnimatedCase})

To clarify our setting, we introduce the notion of a \emph{\(\delta\)-pair}, which was recently defined in \cite[\S 2]{antieau2023Prismatic}.

\begin{definition} \label{DefPrePrismatic}
    A pair \((A, R)\) of a \(\delta\)-ring \(A\) and an \(A\)-algebra \(R\) is called a \emph{\(\delta\)-pair}.
    If the structure map \(A \to R\) is surjective, \((A, R)\) is called a \emph{surjective \(\delta\)-pair}.
    \begin{itemize}
        \item A \(\delta\)-pair \((A, R)\) is \emph{pre-prismatic} if the kernel of the structure map \(A \to R\) contains a locally free ideal \(I\) of rank \(1\) and, Zariski locally on \(\Spec(A)\), \(I\) is generated by an element \(d\) such that \(\delta(d)\) is a unit in the \(p\)-adic completion \(\widehat{R}\) of \(R\).
        \item A pre-prismatic \(\delta\)-pair \((A, R)\) is a \emph{prismatic \(\delta\)-pair with a prism \((A, I)\)} if \(I\) is a locally free ideal of rank \(1\) contained in the kernel of \(A \to R\) and the \(\delta\)-pair\footnote{Here we use the notion of a \emph{\(\delta\)-pair} in the sense of \Cref{DefDeltaRing}. See \Cref{RemarkDeltaPair}.} \((A, I)\) becomes a prism.
    \end{itemize}
    A prismatic \(\delta\)-pair \((A, R)\) with a prism \((A, I)\) satisfying a property \(\mcalP\) is called a \emph{prismatic \(\delta\)-pair with a \(\mcalP\) prism \((A, I)\)}, for example, we have the notion of a \emph{prismatic \(\delta\)-pair with a perfect/bounded/orientable/crystalline/(complete) regular (local) prism \((A, I)\)}.
\end{definition}

\begin{remark} \label{RemarkDeltaPair}
    As mentioned in the footnote in \Cref{DefPrism}, there is a different term of a \emph{\(\delta\)-pair} \((A, I)\) consisting of a \(\delta\)-ring \(A\) and an ideal \(I\) of \(A\) in \cite{bhatt2022Prismsa}.
    By definition, the notion of a \(\delta\)-pair in this sense (resp., a prism) is equivalent to the notion of a surjective \(\delta\)-pair \((A, R)\) (resp., a surjective prismatic \(\delta\)-pair \((A, R)\) with a prism \((A, \ker(A \twoheadrightarrow R))\)).
    Thus the notion of a \(\delta\)-pair in the sense of \cite{antieau2023Prismatic} is a generalization of the notion of a \(\delta\)-pair in the sense of \cite{bhatt2022Prismsa}.
    For this reason, we use the term \emph{\(\delta\)-pair} in both senses.
\end{remark}

\begin{construction} \label{ConstructionAnimatedQuotient}
    Let \((A, I)\) be a bounded prism and let \(R \defeq A/J\), where \(J \defeq (I, f_1, \dots, f_r)\) with a sequence of elements \(f_1, \dots, f_r\) in \(A\) (not necessarily a Koszul-regular sequence).
    Then the pair \((A, R)\) becomes a surjective prismatic \(\delta\)-pair with a bounded prism \((A, I)\).
    Hereafter we use the formalism of \emph{animated rings}, which is developed in \cite{cesnavicius2024Purity,lurie2018Spectral}.
    We briefly review it in \Cref{AnimatedAppendix}.

    We define an animated \(A/I\)-algebra by taking a derived quotient (see \cite[\S 5.1.7]{cesnavicius2024Purity} or \cite[\S 2.3.1]{khan2019Virtual})
    \begin{equation*}
        R^{an}(\underline{f}) \defeq R^{an}(f_1, \dots, f_r) \defeq (A/I)/^L(\overline{f_1}, \dots, \overline{f_r}) \in \ACAlg_{A/I},
    \end{equation*}
    where each \(\overline{f_i}\) is the canonical image of \(f_i \in A\) in \(A/I\) and \(\ACAlg_{R/I}\) is the \(\infty\)-category of animated \(A/I\)-algebras (\Cref{AnimatedRings}).
    We will often omit the symbol \(\overline{(-)}\) and write \(\overline{f_i}\) as \(f_i\).
    The underlying complex of \(R^{an}(f_1, \dots, f_r)\) is isomorphic to the Koszul complex \(\Kos(A/I; f_1, \dots, f_r)\).\footnote{In derived algebraic geometry, the derived quotient \(A/I \to R^{an}(f_1, \dots, f_r)\) defines a \emph{Koszul derived scheme \(\mcalV(f_1, \dots, f_r)\)} and a \emph{quasi-smooth closed immersion} of derived schemes \(\Spec(R^{an}(f_1, \dots, f_r)) \to \Spec(A/I)\) (see, for example, \cite{kerz2018Algebraic, khan2019Virtual}).}
\end{construction}

\begin{warning} \label{WarningChoice}
    In general, due to the sensitivity of Koszul complexes \cite[Proposition 1.6.21]{bruns1998CohenMacaulay}, the (underlying Koszul complex of the) animated ring \(R^{an}(f_1, \dots, f_r)\) depends on the \emph{choice} of a generator \(f_1, \dots, f_r\) of \(J/I\) in \(A/I\).
    However, this animated ring is invariant under \emph{permutation} of \(f_1, \dots, f_r\) (not only as a complex) by the universal property of derived quotients (\cite[Lemma 2.3.5]{khan2019Virtual}). 
    
    Furthermore, our main theorem (\Cref{RegularGivesFFlat}, \Cref{FFlatGivesRegular}, and \Cref{AnimatedCase}) does not depend on a particular way of taking \(f_1, \dots, f_r\). This flexibility leads to good results, especially in the LCI case (\Cref{PrismaticKunzPrismLCI}), independent of the choice of \(f_1, \dots, f_r\).
\end{warning}

\begin{remark} \label{RemarkPrismAssumption}
    Building upon the observation in the preceding remark (\Cref{RemarkBoundedAssumption}), considering the case where \((A, I)\) is a bounded prism and \(R \defeq A/J\) where \(J = (I, f_1, \dots, f_r)\) plays a crucial role in various contexts.

    The primary instance in arithmetic geometry arises when \((A, I)\) is a perfect prism, signifying that \(R\) becomes a semiperfectoid ring.
    By assuming specific conditions on \(J\), such as \(f_1, \dots, f_r\) forming a regular sequence on \(A/I\), we introduce the concept of a (quasi)regular semiperfectoid ring \(R\).
    This concept has been extensively explored in recent studies in arithmetic geometry and algebraic K-theory (for example, \cite{bhatt2019Topologicala, anschutz2023Prismatic}).

    The next case is when \(A\) is a Noetherian ring (more precisely, a complete regular local ring).
    This case aligns with the domain of commutative algebra, notably due to Cohen's structure theorem (refer to \Cref{Envelope} and \Cref{Envelope2} for comprehensive details). 
    Furthermore, Shaul \cite{shaul2021Koszul} uses derived quotients to study the structure of a ``quotient'' by a sequence of elements possibly even a non-regular sequence.

\end{remark}

Next, we recall the notion of \emph{prismatic complex}.

\begin{definition}[Prismatic complex; cf. {\cite[\S 7.2]{bhatt2022Prismsa} and \cite[\S 4.1]{bhatt2022Absolute}}] \label{PrismaticComplexHodgeTate}
    Let \((A, I)\) be a bounded prism and let \(S\) be an animated \(A/I\)-algebra.
    The animated \(A/I\)-algebra \(S\) gives a \((p, I)\)-completed \(E_\infty\)-\(A\)-algebra \(\prism_{S/A}\), that is, \(\prism_{S/A}\) is a commutative algebra object in the \(\infty\)-category \(\Mod_A^{\wedge_{(p,I)}}\) of derived \((p, I)\)-complete \(A\)-modules (see \Cref{DefDerivedIComplete}).
    Furthermore, by \cite[Notation 7.10]{bhatt2022Prismatization} (or \cite[Example 2.4.15]{holeman2023Derived}), this \(E_\infty\)-\(A\)-algebra \(\prism_{S/A}\) can be equipped with the structure of a \((p, I)\)-complete derived \(\delta\)-\(A\)-algebra (the notion of \emph{derived (\(\delta\)-)rings} is described in \cite{raksit2020Hochschild, holeman2023Derived} following Mathew).
    We call the derived \(\delta\)-ring \(\prism_{S/A}\) as the \emph{prismatic complex of \(S\) relative to \(A\)}.

    If \(A \to \pi_0(S)\) is surjective, the cotangent complex \(L_{S/A}\) is \(1\)-connective and then the Hodge--Tate comparison below (\Cref{HodgeTateComparison}) shows that \(\overline{\prism}_{S/A}\) is connective and thus \(\prism_{S/A}\) is an animated \(\delta\)-ring.
    In this paper, we only consider this case, i.e., \(A \twoheadrightarrow \pi_0(S)\) is surjective and thus \(\prism_{S/A}\) is an animated \(\delta\)-\(A\)-algebra.

    Because of the \(\delta\)-structure, we can take a map of animated \(A\)-algebras \(\map{\varphi}{\prism_{S/A}}{\varphi_{A,*}\prism_{S/A}}\) which we call the \emph{Frobenius lift} on \(\prism_{S/A}\), where \(\varphi_{A,*}(-)\) is the restriction of scalars along the Frobenius lift \(\map{\varphi_A}{A}{\varphi_{A,*}A}\) on \(A\) (see also \Cref{PrismaticComplexAnimatedPrism} and \cite[Theorem 2.4.4]{holeman2023Derived}).
    We can also take an animated \(S\)-algebra \(\overline{\prism}_{S/A} \defeq \prism_{S/A} \otimes^L_A A/I\) which we call the \emph{Hodge--Tate complex of \(S\) relative to \(A\)}.
\end{definition}

\begin{definition}[Hodge--Tate comparison; {\cite[Remark 4.1.7]{bhatt2022Absolute}}] \label{HodgeTateComparison}
    The Hodge--Tate complex \(\overline{\prism}_{S/A}\) has the \emph{Hodge--Tate filtration} \(\{\Fil_i^{HT}\overline{\prism}_{S/A}\}_{i \geq 0}\) in the \(\infty\)-category \(\Mod_S^\wedge\) of derived \(p\)-complete \(S\)-modules.
    For each \(i \geq 0\) the cofiber \(\gr_i^{HT}\overline{\prism}_{S/A}\) of \(\Fil_{i-1}^{HT}\overline{\prism}_{S/A} \to \Fil_i^{HT}\overline{\prism}_{S/A}\) has the \emph{Hodge--Tate comparison isomorphism}
    \begin{equation*}
        \gr_i^{HT}\overline{\prism}_{S/A} \cong \parenlr{\parenlr{\bigwedge^i L_{S/(A/I)}}\{-i\}[-i]}^{\wedge}
    \end{equation*}
    in the \(\infty\)-category \(\Mod_{S}^\wedge\) of derived \(p\)-complete \(S\)-modules,
    where \(\wedge^i L_{S/(A/I)}\) is the \(i\)-th derived exterior power of the cotangent complex of \(S\) over \(A/I\) defined in \Cref{ExteriorPowerCotangentComplex}, \((-)\{-i\}\) is the twist \((-) \otimes_{A/I} (I/I^2)^{\otimes -i}\),
    and \((-)^{\wedge}\) is the derived \(p\)-completion functor on \(\Mod_S\) (see \Cref{DefDerivedCompletion} and \Cref{DerivedCompletionCompatible}).
\end{definition}

The next lemma is partially stated in Bhatt's lecture note in \cite[Definition VII.4.1]{bhatt2018Geometric}.
This relies on the formalism of ``\emph{animated prisms}'' and those theories introduced in \cite{bhatt2022Prismatization}.

\begin{lemma} \label{PrismaticComplexAnimatedPrism}
    Let \((A, I)\) be a bounded prism and let \(S\) be an animated \(A/I\)-algebra such that \(A/I \twoheadrightarrow \pi_0(S)\) is surjective.
    Then the prismatic complex \(\prism_{S/A}\) gives an animated prism \((\prism_{S/A} \to \overline{\prism}_{S/A})\) over \((A \to A/I)\).
    Together with the canonical map of animated \(A/I\)-algebras \(S \to \overline{\prism}_{S/A}\), the animated prism \((\prism_{S/A} \to \overline{\prism}_{S/A})\) is an object of the animated prismatic site \((S/A)^{an}_{\prism}\) defined in \cite[Construction 7.11]{bhatt2022Prismatization}.
\end{lemma}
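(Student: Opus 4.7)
The plan is largely definitional: all the substantive structural properties of $\prism_{S/A}$ have been collected in \Cref{PrismaticComplexHodgeTate}, so the lemma amounts to assembling these pieces into the format of an object of the animated prismatic site $(S/A)^{an}_{\prism}$ from \cite[Construction 7.11]{bhatt2022Prismatization}. The data of such an object consists of an animated prism $(B \to \overline{B})$ over $(A \to A/I)$ together with a map of animated $A/I$-algebras $S \to \overline{B}$, and we shall take $(B, \overline{B}) \defeq (\prism_{S/A}, \overline{\prism}_{S/A})$.

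The required pieces are then as follows: (a) $\prism_{S/A}$ is a derived $(p,I)$-complete derived $\delta$-$A$-algebra by \cite[Notation 7.10]{bhatt2022Prismatization} (cf. \cite[Example 2.4.15]{holeman2023Derived}); (b) it is connective, hence an animated $\delta$-$A$-algebra, by \cite[Corollary 7.18]{bhatt2022Prismatization} and \cite[Observation 2.3.11]{holeman2023Derived}, using the surjectivity of $A/I \twoheadrightarrow \pi_0(S)$; (c) the identification $\overline{\prism}_{S/A} \simeq \prism_{S/A} \otimes^L_A A/I$ realizes $\overline{\prism}_{S/A}$ as the quotient by the ideal generated by the image of $I$, which is locally free of rank $1$ because this holds in $(A,I)$ and is preserved along the structure map $A \to \prism_{S/A}$; (d) the relation $p \in I + \varphi_A(I) A$ from $(A,I)$ is inherited by $\prism_{S/A}$ along the map of (derived) $\delta$-rings $A \to \prism_{S/A}$, since the Frobenius lift $\varphi$ on $\prism_{S/A}$ is $\varphi_A$-semilinear. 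Finally, the required map of animated $A/I$-algebras $S \to \overline{\prism}_{S/A}$ is precisely the canonical map produced in the construction of the Hodge-Tate complex in \cite[Variant 4.1.6 and Remark 4.1.11]{bhatt2022Absolute}.

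The main point needing care, and the only real obstacle, is matching the notion of \emph{animated prism} across the references \cite{bhatt2022Prismatization} and \cite{holeman2023Derived}, and verifying that the base change $\prism_{S/A} \otimes^L_A A/I$ genuinely plays the role of ``quotient by an invertible ideal'' in the animated setting. Once this definitional matching is made, nothing further is required: the distinguishedness and invertibility of (a generator of) $I$ in $A$ transport to $\prism_{S/A}$ along the structure map, and the Frobenius lift on $\prism_{S/A}$ is part of the derived $\delta$-structure already provided by \cite[Notation 7.10]{bhatt2022Prismatization}. No substantive new computation should be needed beyond this bookkeeping, and thereby assembling the tuple $(\prism_{S/A} \to \overline{\prism}_{S/A}, S \to \overline{\prism}_{S/A})$ as an object of $(S/A)^{an}_{\prism}$.
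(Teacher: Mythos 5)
Your proposal is correct and takes essentially the same route as the paper: regard \(\prism_{S/A}\) as a derived \((p,I)\)-complete animated \(\delta\)-\(A\)-algebra (connective by the surjectivity hypothesis) and transport the prism structure of \((A,I)\) along the structure map, identifying the quotient \(\prism_{S/A} \otimes^L_A A/I\) with \(\overline{\prism}_{S/A}\) and using the canonical map \(S \to \overline{\prism}_{S/A}\) from the Hodge--Tate construction. The only difference is that the step you flag as "the main point needing care" --- that the derived base change really yields an animated prism in the sense of \cite{bhatt2022Prismatization} --- is exactly what the paper settles by citing \cite[Corollary 2.10]{bhatt2022Prismatization} (rigidity/base change for animated prisms) together with \cite[Construction 2.1 and Notation 2.3]{bhatt2022Prismatization}, rather than re-verifying the invertibility and distinguishedness conditions by hand as in your items (c)--(d).
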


\begin{proof}
    The prismatic complex \(\prism_{S/A}\) is already an animated \(\delta\)-\(A\)-algebra equipped with the Frobenius lift \(\varphi\) induced from the \(\delta\)-structure on the animated \(\delta\)-\(A\)-algebra \(\prism_{S/A}\) in \Cref{PrismaticComplexHodgeTate}.

    By taking base change, the animated \(\delta\)-\(A\)-algebra \(\prism_{S/A}\) gives an animated prism \((\prism_{S/A} \to \prism_{S/A}/I_{\prism_{S/A}})\) over \((A \to A/I)\) by \cite[Corollary 2.10]{bhatt2022Prismatization}, where \(I_{\prism_{S/A}} \defeq I \otimes^L_A \prism_{S/A}\).
    This animated prism is nothing but \((\prism_{S/A} \to \overline{\prism}_{S/A})\).
\end{proof}

The next theorem is one of our main theorems and a key to proving prismatic Kunz's theorem.
In particular, this is a generalization of \cite[Proposition 7.10]{bhatt2022Prismsa}, which needs to assume that \(R\) is a quasiregular semiperfectoid.

\begin{theorem} \label{AnimatedLCIPrism}
    Let \((A, I)\) be a bounded prism and let \(R \defeq A/J\), where \(J = (I, f_1, \dots, f_r)\) with a sequence of elements \(f_1, \dots, f_r\) in \(A\).
    Set an animated ring \(R^{an}(\underline{f}) = R^{an}(f_1, \dots, f_r)\) as in \Cref{ConstructionAnimatedQuotient}.
    Then \(\overline{\prism}_{R^{an}(\underline{f})/A}\) is isomorphic to the derived \(p\)-completion of a (possibly infinite) free \(R^{an}(\underline{f})\)-module and thus the canonical map of animated rings \(R^{an}(\underline{f}) \to \overline{\prism}_{R^{an}(\underline{f})/A}\) is \(p\)-completely faithfully flat in the sense of \Cref{PropOfAnimatedModule}.
    Furthermore, the canonical map of rings \(R/p^nR \to \pi_0(\overline{\prism}_{R^{an}(\underline{f})/A})/(p^n)\) is faithfully flat for all \(n \geq 1\).\footnote{This property is called \emph{adically faithfully flat} in the context of rigid geometry and satisfies effective descent condition for adically quasi-coherent sheaves (see \cite[Definition 4.8.12 (2) and Proposition 6.1.11]{fujiwara2018Foundations}).}
\end{theorem}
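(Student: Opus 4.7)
The plan is to compute the associated graded pieces of the Hodge-Tate filtration of $\overline{\prism}_{R^{an}/A}$ explicitly and check that each one is the derived $p$-completion of a free $R^{an}$-module concentrated in degree~$0$.

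First, I would compute the cotangent complex $L_{R^{an}/(A/I)}$. Since $R^{an} = A/I \otimes^L_{\setZ[X_1, \dots, X_r]} \setZ$ and the standard relative cotangent complex $L_{\setZ/\setZ[X_1, \dots, X_r]}$ is $\setZ^{\oplus r}[1]$, base change yields
$$L_{R^{an}/(A/I)} \simeq (R^{an})^{\oplus r}[1].$$
The Illusie / Dold-Puppe formula $L\bigwedge^i(P[1]) \simeq \Gamma^i(P)[i]$ applied to $P = (R^{an})^{\oplus r}$ then gives
$$L\Omega^i_{R^{an}/(A/I)} \simeq \Gamma^i_{R^{an}}((R^{an})^{\oplus r})[i],$$
which is a free $R^{an}$-module of rank $\binom{r+i-1}{i}$ placed in cohomological degree~$i$.

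Substituting this into the Hodge-Tate comparison of \Cref{HodgeTateComparison} gives
$$\gr_i^{HT}\overline{\prism}_{R^{an}/A} \simeq \bigl(\Gamma^i_{R^{an}}((R^{an})^{\oplus r})\{-i\}\bigr)^{\wedge},$$
the derived $p$-completion of a finite free $R^{an}$-module in degree~$0$, with $\gr_0^{HT} \simeq R^{an}$ itself. Since each graded piece is $p$-completely flat over $R^{an}$, the completeness and exhaustiveness of the Hodge-Tate filtration in the $\infty$-category of derived $p$-complete $R^{an}$-modules exhibit $\overline{\prism}_{R^{an}/A}$ as the derived $p$-completion of the free $R^{an}$-module $\bigoplus_{i \geq 0} \Gamma^i_{R^{an}}((R^{an})^{\oplus r})\{-i\}$. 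This establishes the first assertion, and the canonical retraction onto the $i=0$ summand forces $R^{an} \to \overline{\prism}_{R^{an}/A}$ to be $p$-completely faithfully flat in the sense of \Cref{PropOfAnimatedModule}.

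For the second statement, I would reduce modulo $p^n$. Each $\gr_i^{HT}$ being $p$-completely flat, the Hodge-Tate filtration descends to an exhaustive filtration on $\overline{\prism}_{R^{an}/A}/p^n$ whose graded pieces are honest free $(R^{an}/p^n)$-modules. Taking $\pi_0$ and using $\pi_0(R^{an}) = R$, this produces a filtration on $\pi_0(\overline{\prism}_{R^{an}/A})/p^n$ with graded pieces free over $R/p^n$, and the $i=0$ piece gives a canonical splitting $R/p^n \hookrightarrow \pi_0(\overline{\prism}_{R^{an}/A})/p^n$, from which faithful flatness follows. The main obstacle I anticipate is the careful handling of the infinite Hodge-Tate filtration together with derived $p$-completion: one must verify that the filtration is indeed complete in the appropriate derived $p$-complete category so that the $p$-completed direct sum really recovers $\overline{\prism}_{R^{an}/A}$, and one must ensure that higher Koszul homology of $R^{an}$ (coming from the possibly non-regular sequence $f_1, \dots, f_r$) does not create torsion obstructions when descending to $\pi_0$ and modding out by $p^n$. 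The boundedness of $(A, I)$, i.e.\ bounded $p^\infty$-torsion of $A/I$, is precisely what makes derived $p$-completion well-behaved here and should allow one to invoke \Cref{CompletionFaithfullyFlat} to transfer the $p$-complete statement to the honest mod-$p^n$ statement.
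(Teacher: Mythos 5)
Your computation of the graded pieces is exactly the paper's: base change of \(L_{\setZ/\setZ[X_1,\dots,X_r]}\) gives \(L_{R^{an}/(A/I)} \cong (R^{an})^{\oplus r}[1]\), and the d\'ecalage identity \(\bigwedge^i(P[1]) \cong \Gamma^i(P)[i]\) turns the Hodge--Tate comparison of \Cref{HodgeTateComparison} into \(\gr_i^{HT} \cong (\Gamma^i_{R^{an}}((R^{an})^{\oplus r})\{-i\})^{\wedge}\). The gap is at the assembly step. You pass from ``the Hodge--Tate filtration is complete and exhaustive with free, \(p\)-completely flat graded pieces'' to ``\(\overline{\prism}_{R^{an}/A}\) is the derived \(p\)-completion of \(\bigoplus_{i\geq 0}\Gamma^i_{R^{an}}((R^{an})^{\oplus r})\{-i\}\)''. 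Flatness of the graded pieces does not yield this: knowing a complete exhaustive filtration and its graded pieces does not make the underlying object the (completed) direct sum of those pieces unless the successive extensions split, and without that splitting you have neither the asserted free-module description nor the retraction onto the \(i=0\) piece on which your faithfulness argument (and your mod-\(p^n\) argument) relies. What splits the filtration is projectivity, not flatness: each \(\gr_j^{HT}\) is a free, hence projective, animated \(R^{an}\)-module, so \(\map{q_j}{\Fil_j^{HT}}{\gr_j^{HT}}\) admits a section up to homotopy (\Cref{PropertiesModule}); in the triangulated homotopy category of \(\Mod_{R^{an}}\) this gives \(\Fil_j^{HT} \cong \Fil_{j-1}^{HT} \oplus \gr_j^{HT}\), and induction plus passage to the \(p\)-completed colimit produces the decomposition. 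This is precisely the step the paper's proof carries out and the one missing idea in your write-up; note also that the \(p\)-completion on each graded piece is superfluous, since the underlying complex of \(R^{an}\) is the Koszul complex \(\Kos(A/I; f_1,\dots,f_r)\), which is already derived \(p\)-complete.

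A second, smaller point: your closing appeal to \Cref{CompletionFaithfullyFlat} goes in the wrong direction. That lemma upgrades faithful flatness of \(R/p^nR \to S/p^nS\) for all \(n\) to \(p\)-complete (or, in the Noetherian case, honest) faithful flatness of \(R \to S\); it is what proves \Cref{NoetherianCase}, not the mod-\(p^n\) statement here. In the theorem at hand the mod-\(p^n\) claim is deduced from the direct-sum description by base change: \(\overline{\prism}_{R^{an}/A} \otimes^L_{R^{an}} R/p^nR\) is an honest free \(R/p^nR\)-module because the completion disappears modulo \(p^n\), and connectivity (right exactness of \(\pi_0\) for connective tensor products) identifies \(\pi_0(\overline{\prism}_{R^{an}/A} \otimes^L_{R^{an}} R/p^nR)\) with \(\pi_0(\overline{\prism}_{R^{an}/A})/(p^n)\). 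Your proposed ``filtration on \(\pi_0\)'' is both unnecessary and itself in need of justification, since \(\pi_0\) does not preserve a filtration's graded pieces in general.
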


\begin{proof}
    If \(r = 0\), \(R = A/J\) is nothing but \(A/I\).
    In this case, \(\prism_{R^{an}(\underline{f})/A} = \prism_{(A/I)/A} \cong A\) by \cite[Example V.2.11]{bhatt2018Geometric} and thus \(\overline{\prism}_{R^{an}(\underline{f})/A} \cong A/I = R\) is a free \(R\)-module.
    Assume that \(r > 0\).
    By \Cref{ConstructionAnimatedQuotient}, \(R^{an}(\underline{f})\) is isomorphic to \(A/I \otimes^L_{\setZ[\underline{X}]} \setZ\) where \(A/I \leftarrow \setZ[\underline{X}] \to \setZ\) is defined by \(\overline{f_i} \mapsfrom X_i \mapsto 0\).
    The (algebraic) cotangent complex \(L_{R^{an}(\underline{f})/(A/I)}\)\footnote{The notion of an algebraic cotangent complex is equivalent to the usual cotangent complex in our case (see \Cref{EquivCotangentComplex}).} induces isomorphisms:
    \begin{align}
        L_{R^{an}(\underline{f})/(A/I)} & \cong L_{\setZ/\setZ[\underline{X}]} \otimes^L_\setZ R^{an}(\underline{f}) \label{CotangentComplexIsom} \\
        & \cong (X_1, \dots, X_r)/(X_1, \dots, X_r)^2[1] \otimes^L_\setZ R^{an}(\underline{f}) \cong (R^{an}(\underline{f}))^{\oplus r}[1] \nonumber
    \end{align}
    in the \(\infty\)-category \(\Mod_{R^{an}(\underline{f})}^{cn}\) of animated \(R^{an}(\underline{f})\)-modules by \cite[(5.1.8.1)]{cesnavicius2024Purity} (see also \cite[Proposition 2.3.8]{khan2019Virtual} under the perspective of derived algebraic geometry).

    For simplicity, we write the Hodge--Tate filtration \(\{\Fil_i^{HT}\overline{\prism}_{R^{an}(\underline{f})/A}\}_{i \geq 0}\) and those graded pieces \(\{\gr_i^{HT}\overline{\prism}_{R^{an}(\underline{f})/A}\}_{i \geq 0}\) as \(\{\Fil_i^{HT}\}_{i \geq 0}\) and \(\{\gr_i^{HT}\}_{i \geq 0}\).
    As mentioned in \Cref{HodgeTateComparison}, the Hodge--Tate filtration \(\{\Fil_i^{HT}\}_{i \geq 0}\) of \(\overline{\prism}_{R^{an}(\underline{f})/A}\) gives a Hodge--Tate comparison isomorphism
    \begin{equation} \label{IsomHodgeTate}
        \gr^{HT}_i \cong \parenlr{\parenlr{\bigwedge^i_{R^{an}(\underline{f})} L_{R^{an}(\underline{f})/(A/I)}}\{-i\}[-i]}^{\wedge} \in \Mod_{R^{an}(\underline{f})}.
    \end{equation}
    By using \Cref{ExteriorPowerCotangentComplex} and \cite[Proposition 25.2.4.2 and Corollary 25.2.3.2]{lurie2018Spectral}, the above isomorphisms (\ref{CotangentComplexIsom}) and (\ref{IsomHodgeTate}) imply that the isomorphisms of \(R^{an}(\underline{f})\)-modules for all \(i \geq 0\):
    \begin{align}
        \gr^{HT}_i & \cong \parenlr{\parenlr{\bigwedge^i_{R^{an}(\underline{f})} (R^{an}(\underline{f}))^{\oplus r}[1]}\{-i\}[-i]}^{\wedge} \label{IsomHodgeTateCotangent} \cong \parenlr{\parenlr{\Gamma^i_{R^{an}(\underline{f})}((R^{an}(\underline{f}))^{\oplus r})}\{-i\}}^{\wedge} \\
        & \cong \parenlr{(R^{an}(\underline{f}))^{\oplus D_i}\{-i\}}^{\wedge} \cong (R^{an}(\underline{f}))^{\oplus D_i}\{-i\} \label{IsomDividedPower}
    \end{align}
    where \(D_i \defeq \binom{r+i-1}{i}\) and \(\Gamma_{R^{an}(\underline{f})}^i(-)\) is the derived \(i\)-th divided power over \(R^{an}(\underline{f})\) defined in \cite[Construction 25.2.2.3]{lurie2018Spectral}.
    The last isomorphism follows from that the underlying complex of \(R^{an}(\underline{f})\) is actually a (homological) Koszul complex \(\Kos(A/I, f_1, \dots, f_r)\) which is already derived \(p\)-complete (see \Cref{DefDerivedIComplete}).

    For each \(j \geq 1\), we have a fiber sequence \(\Fil_{j-1}^{HT} \xrightarrow{\iota_j} \Fil_j^{HT} \xrightarrow{q_j} \gr_j^{HT}\) in \(\Mod_{R^{an}(\underline{f})}\).
    Since \(\gr_j^{HT}\) is connective by (\ref{IsomHodgeTateCotangent}) and \(\Fil^{HT}_0 = R^{an}(\underline{f})\), the fiber sequence shows that \(\Fil_j^{HT}\) is also connective for all \(j \geq 0\) and thus that is a fiber sequence in the \(\infty\)-category \(\Mod^{cn}_{R^{an}(\underline{f})}\) of animated \(R^{an}(\underline{f})\)-modules.
    Since \(\gr_j^{HT} \cong (R^{an}(\underline{f}))^{\oplus D_j}\) is a free animated \(R^{an}(\underline{f})\)-module (and thus a projective animated \(R^{an}(\underline{f})\)-module), there exists a right inverse \(\map{r_j}{\gr_j^{HT}}{\Fil_j^{HT}}\) of \(q_j\) (up to homotopy) in \(\Mod_{R^{an}(\underline{f})}^{cn}\) by \Cref{PropertiesModule}.
    As in \citeSta{05QT}, the fiber sequence induces an isomorphism \(\Fil_j^{HT} \cong \Fil_{j-1}^{HT} \oplus \gr_j^{HT}\) in \(\Mod_{R^{an}(\underline{f})}\) for any \(j \geq 1\).
    By \(\Fil^{HT}_0 = \gr^{HT}_0 \cong R^{an}(\underline{f})\), each \(\Fil^{HT}_j\) is isomorphic to \(\oplus_{0 \leq i \leq j} (R^{an}(\underline{f}))^{\oplus D_i}\{-i\}\) in \(\Mod_{R^{an}(\underline{f})}\).
    Since \(\overline{\prism}_{R^{an}(\underline{f})/A}\) is the (\(p\)-completed) colimit of \(\Fil_j^{HT}\) in the \(\infty\)-category \(\Mod_{R^{an}(\underline{f})}^\wedge\) of derived \(p\)-complete \(R^{an}(\underline{f})\)-modules, \(\overline{\prism}_{R^{an}(\underline{f})/A}\) is isomorphic to the derived \(p\)-completion \(\widehat{\oplus R^{an}(\underline{f})}\) of a free animated \(R^{an}(\underline{f})\)-module when considered as an animated \(R^{an}(\underline{f})\)-module.

    In particular, \(\overline{\prism}_{R^{an}(\underline{f})/A}\) is a \(p\)-completely faithfully flat animated \(R^{an}(\underline{f})\)-algebra: By the definition of \(p\)-completely faithfully flat modules over an animated ring (\Cref{PropOfAnimatedModule}), it suffices to show that \(\overline{\prism}_{R^{an}(\underline{f})/A} \otimes^L_{R^{an}(\underline{f})} \pi_0(R^{an}(\underline{f})) \in \Mod_{\pi_0(R^{an}(\underline{f}))} = \mcalD(R)\) is \(p\)-completely faithfully flat over \(R\) in the sense of \Cref{DefofCompFlat}.
    This follows from the following isomorphisms in \(D(R/pR)\) (see \cite[Lemma 4.4]{bhatt2019Topologicala}):
    \begin{align}
        \begin{split}
            (\overline{\prism}_{R^{an}(\underline{f})/A} \otimes^L_{R^{an}(\underline{f})} R) \otimes^L_R R/pR & \cong \overline{\prism}_{R^{an}(\underline{f})/A} \otimes^L_{R^{an}(\underline{f})} R/pR \\
            & \cong \overline{\prism}_{R^{an}(\underline{f})/A} \otimes^L_{R^{an}(\underline{f})} (R^{an}(\underline{f}) \otimes^L_\setZ \setZ/p\setZ) \otimes^L_{R^{an}(\underline{f}) \otimes^L_\setZ \setZ/p\setZ} R/pR\\
            & \cong (\widehat{\bigoplus R^{an}(\underline{f})} \otimes^L_\setZ \setZ/p\setZ) \otimes^L_{R^{an}(\underline{f}) \otimes^L_\setZ \setZ/p\setZ} R/pR \\
            & \cong (\bigoplus R^{an}(\underline{f}) \otimes^L_\setZ \setZ/p\setZ) \otimes^L_{R^{an}(\underline{f}) \otimes^L_\setZ \setZ/p\setZ} R/pR \cong \bigoplus R/pR.
        \end{split}
        \label{pnQuotient}
    \end{align}
    So the desired \(p\)-completely faithful flatness of \(R^{an}(\underline{f}) \to \overline{\prism}_{R^{an}(\underline{f})/A}\) holds.

    Since \(p\)-completely faithful flat maps are \(p^n\)-completely faithful flat for all \(n \geq 1\), \(\overline{\prism}_{R^{an}(\underline{f})/A} \otimes^L_{R^{an}(\underline{f})} R\) is a \(p^n\)-completely faithfully flat object in \(\mcalD(R)\) for all \(n \geq 1\).
    So we have faithfully flat maps of rings \(R/p^nR \to \pi_0(\overline{\prism}_{R^{an}(\underline{f})/A} \otimes^L_{R^{an}(\underline{f})} R/p^nR) \cong \pi_0(\overline{\prism}_{R^{an}(\underline{f})/A})/(p^n)\) for all \(n \geq 1\) as above.
    Here, the last isomorphism is deduced from the connectivity of \(R^{an}(\underline{f})\) and the Hodge--Tate complex \(\overline{\prism}_{R^{an}(\underline{f})/A}\), and \cite[Corollary 7.2.1.23 (2)]{lurie2017Higher}.
\end{proof}

In the above theorem, we can only show that \(R \to \pi_0(\overline{\prism}_{R^{an}(\underline{f})/A})\) is faithfully flat after modulo \(p^n\).
However, based on our conversation with Dine, the map itself is (\(p\)-completely) faithfully flat under a mild condition on \(R\) as follows.

\begin{corollary} \label{NoetherianCase}
    Let \((A, I)\) be a bounded prism and let \(R \defeq A/J\), where \(J = (I, f_1, \dots, f_r)\) with a sequence of elements \(f_1, \dots, f_r\) in \(A\).
    Set an animated ring \(R^{an}(\underline{f}) = R^{an}(f_1, \dots, f_r)\) as in \Cref{ConstructionAnimatedQuotient}.
    If \(R\) has bounded \(p^\infty\)-torsion (see \Cref{BoundedTorsion}), then the canonical map of rings \(R \to \pi_0(\overline{\prism}_{R^{an}(\underline{f})/A})\) is \(p\)-completely faithfully flat.
    Furthermore, if \(R\) is Noetherian, then \(R \to \pi_0(\overline{\prism}_{R^{an}(\underline{f})/A})\) is faithfully flat.
\end{corollary}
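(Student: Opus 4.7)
The plan is to apply \Cref{CompletionFaithfullyFlat} with $\pi = p$, base ring $R$, and target $S \defeq \pi_0(\overline{\prism}_{R^{an}/A})$. The lemma's hypotheses are: (a) $R$ is $p$-Zariskian with bounded $p^\infty$-torsion; (b) $S$ is $p$-adically complete; and (c) $R/p^n R \to S/p^n S$ is faithfully flat for each $n \geq 1$. Conditions (a) and (c) are essentially free from what is already in hand, so the real work lies in (b).

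For (a), the bounded $p^\infty$-torsion of $R$ is the standing hypothesis, while $p$-Zariskianness follows because $A$ is derived $(p, I)$-complete and thus $(p, I)$-Zariskian by \Cref{LemmaDerivedComplete}, forcing the image of $p$ in the quotient $R = A/J$ to lie in $\Jac(R)$. Condition (c) is precisely the final assertion of \Cref{AnimatedLCIPrism}.

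For (b), I would exploit the structural equivalence established inside the proof of \Cref{AnimatedLCIPrism}: as an animated $R^{an}$-module, $\overline{\prism}_{R^{an}/A}$ is the derived $p$-completion of a free module $\bigoplus R^{an}$. Base changing along the canonical map $R^{an} \to R = \pi_0(R^{an})$ and using that derived $p$-completion commutes with derived base change gives
\begin{equation*}
    \overline{\prism}_{R^{an}/A} \otimes^L_{R^{an}} R \ \simeq \ \widehat{\bigoplus R}
\end{equation*}
in $\Mod_R$, where the hat now denotes derived $p$-completion over $R$. Since bounded $p^\infty$-torsion on $R$ passes to $\bigoplus R$, the derived $p$-completion collapses to the classical $p$-adic completion and is concentrated in degree $0$. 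Combined with the identification $\pi_0(M \otimes^L_{R^{an}} R) \cong \pi_0(M)$ for any connective animated $R^{an}$-module $M$ (a formal consequence of $\pi_0(R^{an}) = R$), this exhibits $S$ as the classical $p$-adic completion of $\bigoplus R$, which is manifestly $p$-adically complete.

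With (a)--(c) verified, \Cref{CompletionFaithfullyFlat} immediately yields the $p$-complete faithful flatness of $R \to S$, and promotes this to classical faithful flatness under the additional Noetherian assumption on $R$. The main obstacle is (b): the interaction of derived $p$-completion with the base change down to the discrete ring $R = \pi_0(R^{an})$, which only collapses to a classical completion thanks to the bounded $p^\infty$-torsion hypothesis on $R$.
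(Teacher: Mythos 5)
Your overall strategy coincides with the paper's: both proofs reduce everything to \Cref{CompletionFaithfullyFlat}, feeding in the mod-$p^n$ faithful flatness from \Cref{AnimatedLCIPrism}, so steps (a) and (c) are fine. The genuine gap is in your step (b), namely the claim that ``derived $p$-completion commutes with derived base change'' along $R^{an} \to R = \pi_0(R^{an})$. This is false in general: derived $p$-completion is a limit, so it commutes with $-\otimes^L_{R^{an}} R$ only when $R$ is a perfect (dualizable) $R^{an}$-module, and when the $f_i$ are not a regular sequence $R^{an}$ is a Koszul complex with higher homotopy over which $R$ typically has infinite Tor-dimension. Concretely, since $R^{an} \cong \Kos(A/I; f_1,\dots,f_r)$ \emph{is} perfect over $A/I$, one computes
\begin{equation*}
    \overline{\prism}_{R^{an}/A} \otimes^L_{R^{an}} R \;\simeq\; \Bigl(\widehat{\bigoplus (A/I)}\Bigr) \otimes^L_{A/I} R ,
\end{equation*}
whose higher homotopy groups are Tor-modules $\mathrm{Tor}^{A/I}_i\bigl(\widehat{\bigoplus (A/I)}, R\bigr)$ that have no reason to vanish; so it cannot in general be equivalent to the discrete module $\widehat{\bigoplus R}$, and even the $\pi_0$-level consequence you need --- that $\pi_0(\overline{\prism}_{R^{an}/A})$ is the \emph{classical} $p$-adic completion of $\bigoplus R$, hence $p$-adically complete --- is exactly the nontrivial point left unproved by this route.

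The repair is what the paper does: work with the derived completion directly rather than base-changing first. Writing $\overline{\prism}_{R^{an}/A} \simeq \widehat{M}$ with $M = \bigoplus R^{an}$ free, right $t$-exactness of the derived $p$-completion functor gives $\pi_0(\widehat{M}) \cong \widehat{\pi_0(M)}$ as soon as the latter is discrete (\Cref{DerivedCompletionConnectedComponent}); since $\pi_0(M) \cong \bigoplus R$ inherits bounded $p^\infty$-torsion from $R$, its derived $p$-completion is discrete and agrees with the classical $p$-adic completion, which is $p$-adically complete. (Note also that for a general module the classical $p$-adic completion need not be complete, so your ``manifestly $p$-adically complete'' also silently uses this bounded-torsion argument.) With that substitution your proof matches the paper's.
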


\begin{proof}
    Since \(R\) has bounded \(p^\infty\)-torsion and \(R = A/J\) is derived \(p\)-complete, \(R\) is \(p\)-adically complete by \cite[Lemma III.2.4]{bhatt2018Geometric}.
    By the above \Cref{AnimatedLCIPrism}, there exists a free \(R^{an}(\underline{f})\)-module \(M\) such that its derived \(p\)-completion \(\widehat{M}\) is isomorphic to \(\overline{\prism}_{R^{an}(\underline{f})/A}\).
    Since the free \(R\)-module \(\pi_0(M) \cong \oplus \pi_0(R^{an}(\underline{f})) \cong \oplus R\) also has bounded \(p^\infty\)-torsion, its derived \(p\)-completion \((\pi_0(M))^{\wedge}\) is discrete and coincides with the \(p\)-adic completion of the free \(R\)-module \(\pi_0(M)\).
    By \Cref{DerivedCompletionConnectedComponent}, \(\pi_0(\widehat{M}) \cong (\pi_0(M))^{\wedge}\) and thus \(\pi_0(\widehat{M}) \cong \pi_0(\overline{\prism}_{R^{an}(\underline{f})/A})\) is \(p\)-adically complete.
    We can apply \Cref{CompletionFaithfullyFlat} in this situation because of the faithful flatness of \(R/(p^n) \to \pi_0(\overline{\prism}_{R^{an}(\underline{f})/A})/(p^n)\) as above.
    Then \(\pi_0(\overline{\prism}_{R^{an}(\underline{f})/A})\) is \(p\)-completely faithfully flat over \(R\) (or faithfully flat if \(R\) is Noetherian).
\end{proof}

The bounded property of \(R\) is under consideration in \(p\)-adic Hodge theory \cite{bhatt2019Topologicala} and rigid geometry \cite{fujiwara2011Hausdorff}.

\begin{remark} \label{BoundedTorsion}
    Let \((A, I)\) be a bounded prism and let \(R \defeq A/J\), where \(J = (I, f_1, \dots, f_r)\) with a sequence of elements \(f_1, \dots, f_r\) in \(A\).
    If \(R\) is of characteristic \(0\) (this only means \(\setZ \subseteq R\)), we are interested in whether \(R\) has bounded \(p^\infty\)-torsion or not.
    The following cases are known. The former is a perfectoid flavor and the latter is a Noetherian flavor.
    \begin{enumalphp}
        \item \(R\) is a perfectoid ring or, more generally, a quasisyntomic ring.
        \item \(R\) is a Noetherian ring or, more generally, a rigid-Noetherian ring\footnote{A \(p\)-adic ring \(R\) is \emph{rigid-Noetherian} if \(R\) is \(p\)-adically complete and \(R[1/p]\) is Noetherian (\cite[Definition 5.1.1]{fujiwara2011Hausdorff}).}.
    \end{enumalphp}
    Of course, if \(R\) is Noetherian or quasisyntomic, then \(R\) has bounded \(p^\infty\)-torsion (\cite[Proposition 4.19 and Definition 4.20]{bhatt2019Topologicala}) (and thus \(R\) is \(p\)-adically complete since \(R\) is already derived \(p\)-complete).
    If \(R\) is rigid-Noetherian, then \(R\) is \(p\)-adically pseudo-adhesive\footnote{A ring \(R\) is \emph{\(p\)-adically pseudo-adhesive} if \(R[1/p]\) is Noetherian and any finitely generated \(R\)-module has bounded \(p^\infty\)-torsion (\cite[Definition 4.3.1]{fujiwara2011Hausdorff}).} by \cite[Theorem 5.1.2]{fujiwara2011Hausdorff}.
    So the \(R\)-module \(R\) itself has bounded \(p^\infty\)-torsion.
\end{remark}

\section{Prismatic Kunz's Theorem} \label{SectionPrismaticKunz}

Next, we apply the above statements to commutative algebra.

\subsection{The proof of prismatic Kunz's theorem}

We formulate ``\emph{prismatic Kunz's theorem}'', which characterizes the regularity of complete Noetherian local rings via the Frobenius lift of a prismatic complex (\Cref{AnimatedCase}).
We start providing some lemmas.

\begin{lemma}\label{617SatN}
    Let \((A, \mfrakm_A)\) be a complete Noetherian local domain.
    Assume that \(A\) has a $\delta$-structure such that \(\mfrakm_A\) is generated by elements \(p, x_1, \dots, x_n\) satisfying $\delta(x_{i})\in \mfrakm_A$.
    Then for an ideal $I\subseteq A$, the following conditions are equivalent. 
\begin{enumerate}
    \item $(A, I)$ is an orientable prism.
    \item $I$ is generated by a distinguished element $d\in \mfrakm_A$.
    \item $I$ is generated by $p-f$ for some $f\in (x_{1},\ldots, x_n)$.
\end{enumerate}
\end{lemma}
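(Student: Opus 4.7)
The plan is to prove the cycle $(1) \Leftrightarrow (2)$ and $(2) \Leftrightarrow (3)$, since the first amounts to unpacking the definition of an orientable prism while the main content lies in the second. For $(1) \Leftrightarrow (2)$: if $(A, I)$ is an orientable prism, any generator of $I$ is a distinguished element by \cite[Lemma 2.25]{bhatt2022Prismsa}, and it lies in $\mfrakm_A$ because $I \subsetneq A$ in a prism (as $A/I$ is nonzero) and $A$ is local. Conversely, given $I = (d)$ with $d \in \mfrakm_A$ distinguished, the three prism axioms follow: $I$ is free of rank one because $d$ is a nonzero element of the domain $A$ (nonzero since $\delta(0) = 0$ is not a unit); $A$ is derived $(p, I)$-complete as a complete Noetherian local ring; and $\varphi(d) = d^p + p\delta(d)$ with $\delta(d) \in A^{\times}$ yields $p = \delta(d)^{-1}(\varphi(d) - d^p) \in I + \varphi(I)A$.

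For the easy direction $(3) \Rightarrow (2)$, set $d \defeq p - f$ and compute $\delta(d)$ modulo $\mfrakm_A$. Using $\delta(p) = 1 - p^{p-1} \equiv 1$, $\delta(-f) \in \mfrakm_A$ by the sublemma below, and the fact that the correction term $\frac{p^p + (-f)^p - d^p}{p} = -\sum_{k=1}^{p-1}\tfrac{1}{p}\binom{p}{k}\,p^k(-f)^{p-k}$ is a sum of monomials of positive degree in both $p, f \in \mfrakm_A$, one obtains $\delta(d) \equiv 1 \pmod{\mfrakm_A}$, hence $\delta(d) \in A^{\times}$. For the main direction $(2) \Rightarrow (3)$, write $d = \alpha p + \beta$ with $\alpha \in A$ and $\beta \in (x_1, \ldots, x_n)$; it suffices to show $\alpha \in A^{\times}$, for then $d = \alpha\bigl(p - (-\alpha^{-1}\beta)\bigr)$ and one may take $f \defeq -\alpha^{-1}\beta \in (x_1, \ldots, x_n)$. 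Expanding $\delta(\alpha p) = \alpha^p\delta(p) + p^p\delta(\alpha) + p\,\delta(\alpha)\delta(p) \equiv \alpha^p \pmod{\mfrakm_A}$, invoking the sublemma for $\delta(\beta) \in \mfrakm_A$, and absorbing the analogous binomial correction in $\mfrakm_A$, we find $\delta(d) \equiv \alpha^p \pmod{\mfrakm_A}$. Since $\delta(d) \in A^{\times}$, the image of $\alpha^p$ in the residue field $A/\mfrakm_A$ is nonzero, and therefore $\alpha \in A^{\times}$.

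The sublemma underlying both reductions is: $\delta\bigl((x_1, \ldots, x_n)\bigr) \subseteq \mfrakm_A$. For a single term $c x_i$, the formula $\delta(c x_i) = c^p\delta(x_i) + x_i^p\delta(c) + p\,\delta(c)\delta(x_i)$ combined with $\delta(x_i), x_i, p \in \mfrakm_A$ gives $\delta(c x_i) \in \mfrakm_A$. For a general sum, induct on the number of summands using $\delta(a + b) = \delta(a) + \delta(b) - \sum_{k=1}^{p-1}\tfrac{1}{p}\binom{p}{k}a^k b^{p-k}$, noting the correction lies in $\mfrakm_A$ whenever $a, b \in \mfrakm_A$. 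This delicate $\delta$-computation modulo $\mfrakm_A$, where the hypothesis $\delta(x_i) \in \mfrakm_A$ is used essentially, is the main obstacle of the argument; without it the residue class $\delta(d) \bmod \mfrakm_A$ could not be identified with $\alpha^p$ and the implication $(2) \Rightarrow (3)$ would break. The remaining implications reduce to direct verifications of the prism axioms.
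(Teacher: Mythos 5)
Your proof is correct and follows essentially the same route as the paper: \IFF{(1)}{(2)} by \cite[Lemma 2.25]{bhatt2022Prismsa} together with checking the prism axioms, and \IFF{(2)}{(3)} by writing \(d = \alpha p + \beta\) with \(\beta \in (x_1, \dots, x_n)\) and computing \(\delta\) modulo \(\mfrakm_A\) using \(\delta(x_i) \in \mfrakm_A\). The only cosmetic difference is that you identify \(\delta(d) \equiv \alpha^p \pmod{\mfrakm_A}\) directly, whereas the paper argues by contradiction, invoking \(\delta(\mfrakm_A^2) \subseteq \mfrakm_A\) (\Cref{NoDeltaStructure}) in the case that the coefficient of \(p\) is a non-unit.
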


\begin{proof}
    \IFF{(1)}{(2)}: Since \(A\) is an integral domain, the principal ideal \(I\) is already free of rank \(1\).
    Thus this is a direct consequence of \cite[Lemma 2.25]{bhatt2022Prismsa}.

    \IMPLIES{(2)}{(3)}: By assumption, $p-d\in \mfrakm_A = (p, x_1, \dots, x_n)$. 
    We can write $p-d = pg+h$ by using some $g \in A$ and $h \in (x_{1},\ldots, x_{n})$. 
    Then $d=p(1-g)-h$ and $\delta(h)\in \mfrakm_A$ because of \(\delta(x_i) \in \mfrakm_A\).
    If $u:=1-g$ is a unit, we obtain a desired generator $u^{-1}d = p-u^{-1}h$ of $I$. 
    Assume that $u$ is not invertible.
    Then $pu \in p\mfrakm_A \subseteq \mfrakm_A^2$, which implies that $\delta(pu) \in \mfrakm_A$ by \Cref{NoDeltaStructure}. 
    Hence $\delta(d) = \delta(pu-h) \in \mfrakm_A$, but this is a contradiction. 

    \IMPLIES{(3)}{(2)}: Since $x_{i}$ and $\delta(x_{i})$ belong to $\mfrakm_A$ for any \(1 \leq i \leq n\), so is $\delta(f)$.
    Moreover, $\delta(p)$ is a unit and $(p, f)\subseteq \mfrakm_A$.
    A direct calculation of \(\delta(p-f)\) shows that $p-f$ is a distinguished element of \(A\), as desired. 
\end{proof}

\begin{example}[Complete log-regular rings] \label{LogRegularPrism}
    Following \cite{ishiro2025Perfectoida}, we can consider the next example.
    Let $R$ be a complete local log-regular ring of residue characteristic $p$ and let \(C\) be the Cohen ring of the residue field of \(R\) equipped with a \(\delta\)-structure (\Cref{CohenRing}).
    Then by Kato's structure theorem,
    \begin{equation*}
        R\cong C[|\mathcal{Q}|]/(p-f)
    \end{equation*}
    for some fine sharp saturated monoid $\mathcal{Q}$ and some $f\in C[|\mathcal{Q}|]$ with no non-zero constant terms.
    One can extend the Frobenius lift on \(C\) to the Frobenius lift on $C[|\mathcal{Q}|]$ by the rule: $e^{q}\mapsto (e^{q})^p$ ($q\in \mathcal{Q}$).
    Thus we obtain a $\delta$-structure on $C[|\mathcal{Q}|]$ such that $\delta(e^{q})=0$. 
    By Lemma \ref{617SatN}, $(C[|\mathcal{Q}|], (p-f))$ is an orientable prism (note that $C[|\mathcal{Q}|]$ is a domain). Moreover, since $R$ is a domain, that prism is transversal when $f\neq 0$, or crystalline otherwise. 
\end{example}

To apply the lemmas in \Cref{SectionPrismaticComplexes} for a complete Noetherian local ring, we use the following construction based on \Cref{617SatN}.

\begin{corollary}[{cf. \cite[Remark 3.11]{bhatt2022Prismsa}}] \label{Envelope}
    Let $(R, \mfrakm_R, k)$ be a complete Noetherian local ring of residue characteristic \(p\).
    Then there exists a surjective prismatic \(\delta\)-pair \((A, R)\) with a complete regular local prism \((A, I)\).
    Namely, there exists a map of $\delta$-pairs: 
    \begin{equation}\label{eq618SunN}
        (A, I) \to (A, J)
    \end{equation}
    where $A$ is a complete unramified regular local ring of mixed characteristic \((0, p)\), \(I\) and \(J\) are ideals of \(A\) such that \(I \subseteq J\), $(A, I)$ is a complete regular local prism, and $A/J \cong R$.
\end{corollary}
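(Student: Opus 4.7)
The plan is to construct the prism \((A, I)\) explicitly using Cohen's structure theorem, and then verify the required properties by invoking the two key results from the earlier sections: \Cref{617SatN} for the prism structure and \Cref{EquivRegularPrism} for the regularity.

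To begin, I would fix any system of generators \(x_1, \dots, x_n\) of \(\mfrakm_R\) (not necessarily minimal) and apply Cohen's structure theorem to produce a surjection
\begin{equation*}
    \pi \colon A \defeq C(k)[|T_1, \dots, T_n|] \twoheadrightarrow R, \qquad T_i \mapsto x_i,
\end{equation*}
where \(C(k)\) is the Cohen ring of \(k\) equipped with a \(\delta\)-structure guaranteed by \Cref{CohenRing}. I would then extend this \(\delta\)-structure to \(A\) by declaring \(\varphi(T_i) = T_i^p\), equivalently \(\delta(T_i) = 0\). The resulting \(A\) is a complete unramified regular local ring of mixed characteristic \((0, p)\) with \(\mfrakm_A = (p, T_1, \dots, T_n)\), and since \(\delta(T_i) = 0 \in \mfrakm_A\), it satisfies the hypotheses of \Cref{617SatN}.

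To produce the ideal \(I\), I would use that \(R\) has residue characteristic \(p\), so \(p \in \mfrakm_R = (x_1, \dots, x_n)R\); consequently there exists \(g \in (T_1, \dots, T_n)A\) with \(\pi(g) = p\). Setting \(I \defeq (p - g)A\), the implication \IMPLIES{(3)}{(1)} of \Cref{617SatN} shows that \((A, I)\) is an orientable prism, and by construction \(I \subseteq J \defeq \ker(\pi)\). To upgrade this to a complete regular local prism, I would invoke \Cref{EquivRegularPrism}: condition (1) there holds because \(A\) itself is a complete regular local ring, so all equivalent conditions hold. Combined with \(A/J \cong R\) from the construction, the inclusion \(I \subseteq J\) yields the desired map of \(\delta\)-pairs \((A, I) \to (A, J)\).

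I do not anticipate a real obstacle: the entire argument is a direct assembly of results already established. The only point requiring a moment of thought is the existence of a lift \(g \in (T_1, \dots, T_n)A\) of \(p\), which follows immediately from \(p \in \mfrakm_R\) together with the fact that \(\pi\) sends \((T_1, \dots, T_n)A\) onto \((x_1, \dots, x_n)R = \mfrakm_R\). Thus the substantive content of this corollary lies entirely in the deformation criterion \Cref{EquivRegularPrism} and in the characterization of prism-generating elements in \Cref{617SatN}, both of which are already in hand.
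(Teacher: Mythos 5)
Your proposal is correct and follows essentially the same route as the paper: Cohen's structure theorem gives the surjection $C(k)[|T_1,\dots,T_n|]\twoheadrightarrow R$ with $\delta(T_i)=0$, the lift $p-g$ of $0$ with $g\in(T_1,\dots,T_n)$ produces the orientation, and \Cref{617SatN} yields the prism structure, the regularity being immediate since $A$ is a complete regular local ring (via \Cref{EquivRegularPrism}, which the paper leaves implicit). No gaps.
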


\begin{proof}
    We have a surjective map of rings $\alpha \colon C[|T_1,\ldots, T_n|]\twoheadrightarrow R$ that sends $\{T_{1}, \ldots, T_{n}\}$ to a system of generators of $\mfrakm_R$ where \(C\) is the Cohen ring of \(k\).
    Since $p\in \mfrakm_R$ and \(\mfrakm_R\) is generated by \(\alpha(T_1), \dots, \alpha(T_n) \in R\), there exists an element $f\in (T_{1},\ldots, T_{n}) \subseteq C[|T_1, \dots, T_n|]$ such that $p-f\in \ker (\alpha)$. 
    Here, $(C[|T_1,\ldots, T_n|], (p-f))$ is an orientable prism equipped with \(\delta(T_i) = 0\) by Lemma \ref{617SatN}. 
    Hence by putting $A \defeq C[|T_1,\ldots, T_n|]$, $I \defeq (p-f)$, and $J \defeq \ker (\alpha)$, we can take the desired map of $\delta$-pairs (\ref{eq618SunN}). 
\end{proof}

\begin{definition} \label{Envelope2}
    Let \((R, \mfrakm, k)\) be a complete Noetherian local ring of residue characteristic \(p\).
    Fix a surjective prismatic \(\delta\)-pair \((A, R)\) with a bounded prism \((A, I)\) such that \(\ker(A \twoheadrightarrow R) = (I, f_1, \dots, f_r)\) for some \(f_1, \dots, f_r \in A\), which always exists by \Cref{Envelope}. 

    We can take such a bounded prism \((A, I)\) satisfying that \((A, I)\) is transversal or crystalline, \(A/I\) is a Noetherian domain, and \(\dim(A) \leq \emdim(R) + 1\), where \(\emdim(R)\defeq \dim_k \mfrakm_R/\mfrakm_R^2\) is the embedding dimension of \(R\).
    We call it a \emph{small base prism (with respect to \(R\))}, which exists by the proof of \Cref{Envelope}.

    Furthermore, we can take a small base prism \((A, I)\) such that \((A, I)\) is a complete regular local prism and satisfies \(\dim(A) = \emdim(R) + 1\).
    We call such a small base prism a \emph{minimal complete regular local prism \((A, I)\) (with respect to \(R\))}.
\end{definition}

\begin{remark} \label{RemarkEnvelope}
    In certain cases, there are some choices of such a surjective prismatic \(\delta\)-pair.
    Based on \Cref{LogRegularPrism}, if \(R\) is a complete local log-regular ring of residue characteristic \(p\), we can take a surjective prismatic \(\delta\)-pair \((C[|\mcalQ|], R)\) with a small base prism \((C[|\mcalQ|], (p-f))\) which is not necessarily minimal but a small base prism.
\end{remark}

In the following, we use the next lemma which is a special case of \cite[Lemma 5.7]{bhatt2021CohenMacaulayness}.
We give a proof for the reader's convenience in our case.
See also \Cref{DerivedCompletionConnectedComponent} as a similar statement.

\begin{lemma} \label{DerivedCompletionDiscrete}
    Let \(M\) be an object in \(D^{\leq 0}(\setZ)\).
    If the derived \(p\)-completion \(\widehat{M}\) is discrete and \(p\)-torsion-free, then the isomorphism
    \begin{equation*}
        \widehat{M} \cong \widehat{H^0(M)}
    \end{equation*}
    holds, where the right-hand side is the \(p\)-adic completion of \(H^0(M)\).
\end{lemma}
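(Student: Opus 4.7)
The plan is to exploit the canonical truncation triangle $\tau^{\leq -1}M \to M \to H^0(M)$ in $D(\setZ)$. Since derived $p$-completion is an exact functor of stable $\infty$-categories, applying it produces a fibre sequence
\begin{equation*}
    \widehat{\tau^{\leq -1}M} \to \widehat{M} \to \widehat{H^0(M)},
\end{equation*}
and the desired isomorphism will follow once one checks that $\widehat{\tau^{\leq -1}M} = 0$ and that $\widehat{H^0(M)}$ coincides with the classical $p$-adic completion $\lim_n H^0(M)/p^n H^0(M)$.

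For the vanishing, by derived Nakayama it is enough to show that $\tau^{\leq -1}M \otimes^L_\setZ \setZ/p = 0$. The key input is the idempotence isomorphism $M \otimes^L_\setZ \setZ/p \cong \widehat{M} \otimes^L_\setZ \setZ/p$, which by hypothesis equals the discrete module $\widehat{M}/p$. Reading off the long exact cohomology sequence of the cofibre sequence $M \xrightarrow{\cdot p} M \to M \otimes^L_\setZ \setZ/p$ in degree $-1$ yields
\begin{equation*}
    0 \to H^{-1}(M)/p \to H^{-1}\!\bigl(M \otimes^L_\setZ \setZ/p\bigr) \to H^0(M)[p] \to 0,
\end{equation*}
whose middle term vanishes by discreteness. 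In particular $H^0(M)[p] = 0$, so $H^0(M)$ is $p$-torsion-free and $H^0(M) \otimes^L_\setZ \setZ/p \cong H^0(M)/p$ is discrete as well.

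At this point the truncation map $M \to H^0(M)$, which induces the identity on $H^0$, becomes after $-\otimes^L_\setZ \setZ/p$ an isomorphism of discrete modules $\widehat{M}/p \xrightarrow{\sim} H^0(M)/p$; its fibre $\tau^{\leq -1}M \otimes^L_\setZ \setZ/p$ therefore vanishes, and derived Nakayama delivers $\widehat{\tau^{\leq -1}M} = 0$ and hence $\widehat{M} \cong \widehat{H^0(M)}$. Since $H^0(M)$ is $p$-torsion-free, its derived $p$-completion is automatically concentrated in degree $0$ and coincides with the classical $p$-adic completion, yielding the stated isomorphism. The only real content lies in extracting the $p$-torsion-freeness of $H^0(M)$ from the hypotheses on $\widehat{M}$; the remainder is formal manipulation of the truncation triangle combined with standard properties of derived completion.
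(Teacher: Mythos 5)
Your argument is correct, and it shares the paper's starting point --- the truncation triangle \(\tau^{\leq -1}M \to M \to H^0(M)\) together with the identification \(M \otimes^L_\setZ \setZ/p^n\setZ \cong \widehat{M} \otimes^L_\setZ \setZ/p^n\setZ \cong \widehat{M}/p^n\widehat{M}\) --- but the endgame is genuinely different. The paper never completes the truncation triangle: it runs the long exact sequence mod \(p^n\) for every \(n\) to obtain \(\widehat{M}/p^n\widehat{M} \cong H^0(M)/p^nH^0(M)\), and then computes \(\widehat{M} = H^0(R\lim_n(M \otimes^L_\setZ \setZ/p^n\setZ)) \cong \lim_n H^0(M)/p^nH^0(M)\) directly, landing on the classical \(p\)-adic completion without invoking derived Nakayama or the \(p\)-torsion-freeness of \(H^0(M)\). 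You instead work only mod \(p\): you extract \(H^0(M)[p]=0\) from the discreteness of \(M \otimes^L_\setZ \setZ/p\setZ \cong \widehat{M}/p\widehat{M}\), conclude that \(M \otimes^L_\setZ \setZ/p\setZ \to H^0(M)\otimes^L_\setZ \setZ/p\setZ\) is an equivalence of discrete modules, kill \(\widehat{\tau^{\leq -1}M}\) by derived Nakayama in the completed triangle, and then identify the derived with the classical \(p\)-adic completion of \(H^0(M)\) as a separate standard step. Both routes are sound; yours trades the explicit \(R\lim\) computation (where the paper implicitly uses that the terms are discrete, so that \(H^0\) of the derived limit is the ordinary limit) for derived Nakayama, and it makes the \(p\)-torsion-freeness of \(H^0(M)\) explicit as a byproduct, whereas the paper's route avoids having to compare derived and classical completions of \(H^0(M)\) at the end.
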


\begin{proof}
    Taking the exact triangle \(\tau^{\leq -1}M \to M \to \tau^{\geq 0}M \xrightarrow{+1}\) in \(D(\setZ)\).
    By our assumption, we have the following isomorphism for each \(n \geq 1\):
    \begin{align}
        M \otimes^L_\setZ \setZ/p^n\setZ & \cong \widehat{M} \otimes^L_\setZ \setZ/p^n\setZ \cong \widehat{M}/p^n\widehat{M}[0]. \label{CenterIsom}
    \end{align}
    Since \((\tau^{\leq -1}M) \otimes^L_\setZ \setZ/p^n\setZ\) is concentrated in degree \(\leq -1\), the cohomological long exact sequence of \((\tau^{\leq -1}M) \otimes^L_\setZ \setZ/p^n\setZ \to M \otimes^L_\setZ \setZ/p^n\setZ \to (\tau^{\geq 0}M) \otimes^L_\setZ \setZ/p^n\setZ \xrightarrow{+1}\) shows that the map \(\widehat{M}/p^n\widehat{M} \to H^0(M)/p^nH^0(M)\) is an isomorphism.
    Combining this and \eqref{CenterIsom}, we have the following isomorphism:
    \begin{align*}
        \widehat{M} & = H^0(\widehat{M}) \cong H^0(R\lim_n(M \otimes^L_\setZ \setZ/p^n\setZ)) \cong H^0(R\lim_n(\widehat{M} \otimes^L_\setZ \setZ/p^n\setZ)) \\
        & \cong \lim_n(H^0(M)/p^nH^0(M)) = \widehat{H^0(M)}.
    \end{align*}
    This shows the desired isomorphism.
\end{proof}

Now we can prove the following variant of ``\emph{prismatic Kunz's theorem}''.
We first show that any regular local ring gives a faithfully flat Frobenius lift of a prismatic complex.
The main technique is the deformation property of regular prisms (\Cref{EquivRegularPrism}) and (classical) Kunz's theorem (\Cref{KunzFrob}).

\begin{theorem}[Regular local ring gives the faithfully flat Frobenius lift] \label{RegularGivesFFlat}
    Let \((R, \mfrakm)\) be a complete Noetherian local ring of residue characteristic \(p\).
    Fix a surjective prismatic \(\delta\)-pair \((A, R)\) with a small base prism \((A, I)\) in the sense of \Cref{Envelope2}.
    If \(R\) is a regular local ring, then the Frobenius lift \(\map{\varphi}{\prism_{R/A}}{\varphi_{A,*}\prism_{R/A}}\) of the animated \(\delta\)-\(A\)-algebra \(\prism_{R/A}\) is faithfully flat.
    In this case, \(A/I \twoheadrightarrow R\) is an isomorphism and \(\prism_{R/A}\) is \(A\) itself.
\end{theorem}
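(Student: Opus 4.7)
My plan is to reduce the theorem to the observation that the Frobenius lift on an unramified regular local ring is faithfully flat, after first verifying the isomorphism $A/I \cong R$ asserted in the statement.

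First I would verify that the surjection $A/I \twoheadrightarrow R$ is an isomorphism. Since $(A, I)$ is transversal or crystalline, $I = (\xi)$ is generated by a nonzerodivisor $\xi$ of $A$ lying in the Jacobson radical (derived $(p,I)$-completeness plus \Cref{LemmaDerivedComplete}), and by \Cref{EquivRegularSeq} the ring $A$ is $(p, I)$-adically complete. Combined with Noetherianness of $A/I$ (built into the small base prism definition), \citeSta{05GH} shows that $A$ itself is Noetherian. Assuming (as always guaranteed by the construction in \Cref{Envelope}) that $A$ is local, Krull's principal ideal theorem gives $\dim(A/I) = \dim(A) - 1 \leq \emdim(R) = \dim(R)$, while surjectivity of $A/I \twoheadrightarrow R$ yields the reverse inequality $\dim(A/I) \geq \dim(R)$. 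Hence $\dim(A/I) = \dim(R)$, and since $A$ is a $(p, I)$-adically complete Noetherian local ring, it is universally catenary; the kernel of $A/I \twoheadrightarrow R$, being a prime in the domain $A/I$ whose quotient has the same dimension as $A/I$ itself, must have height zero, hence vanishes. Thus $A/I \cong R$.

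Next, I would use that for $R = A/I$ the prismatic complex $\prism_{R/A}$ is simply $A$ (the $r = 0$ case already appearing in the proof of \Cref{AnimatedLCIPrism}, or Example V.2.11 of \cite{bhatt2018Geometric}), so the Frobenius lift on $\prism_{R/A}$ is just $\varphi_A \colon A \to \varphi_{A,*}A$. Since $R \cong A/I$ is regular, \Cref{EquivRegularPrism} upgrades $(A, I)$ to a regular local prism, so in particular $A$ is unramified regular local and $A/pA$ is also regular.

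Finally, I would apply classical Kunz's theorem (\Cref{KunzFrob}) to get that the Frobenius on $A/pA$ is faithfully flat, and lift this to $\varphi_A$: because $A$ is a $p$-adically complete Noetherian local ring with $p$ a nonzerodivisor and $\varphi_A(p) = p$, the local criterion of flatness promotes flatness mod $p$ to flatness of $\varphi_A$, and locality of $\varphi_A$ gives faithful flatness. Alternatively, the $p$-adic Kunz theorem (\Cref{KunzRegular}) applies directly. The main subtlety is the first step: the dimension-plus-catenarity argument is what makes the ``small base prism'' hypothesis precisely the right condition for this reduction, and without it there could be extra parameters in $A$ beyond those appearing in $R$.
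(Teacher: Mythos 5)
Your proof is correct and essentially the paper's own argument: the dimension count forced by the small base prism gives \(A/I \cong R\) (your appeal to universal catenarity is unnecessary, since a nonzero prime in the domain \(A/I\) would already force \(\dim R < \dim(A/I)\), but it is harmless), then \(\prism_{R/A} \cong A\), and regularity of \(A/pA\) via \Cref{EquivRegularPrism} plus classical Kunz's theorem yields (faithful) flatness of \(\varphi_A\), with your local-criterion-of-flatness step playing exactly the role of \Cref{TransversalFFlatEquiv} combined with \cite[Proposition 5.1]{bhatt2018Direct} in the paper. The only slip is the parenthetical claim that \Cref{KunzRegular} ``applies directly'': \(p\)-adic Kunz deduces regularity of \(R\) from the existence of a faithfully flat map to a perfectoid ring and does not by itself produce faithful flatness of \(\varphi_A\), but since you offer this only as an alternative, the main argument stands.
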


\begin{proof}
    Since \(R\) is a regular local ring, the small base prism \((A, I)\) satisfies
    \begin{equation*}
        \dim(R) \leq \dim(A/I) = \dim(A) - 1 \leq \emdim(R) = \dim(R).
    \end{equation*}
    Then the surjective map \(A/I \twoheadrightarrow R\) is an isomorphism because \(A/I\) is an integral domain by the assumption of small base prisms (see \Cref{Envelope2}).
    In particular, \(\prism_{R/A} \cong A\) by \cite[Example V.2.11]{bhatt2018Geometric}.
    So the Frobenius lift \(\varphi\) on \(\prism_{R/A} = \pi_0(\prism_{R/A}) = A\) is faithfully flat by the following reason: Since \(R \cong A/I\) is a complete regular local ring, so is \(A/pA\) by \Cref{EquivRegularPrism}.
    By Kunz's theorem, the Frobenius map \(F\) on \(A/pA\) is faithfully flat and thus \(\varphi\) is \(p\)-completely faithfully flat by \Cref{TransversalFFlatEquiv}.
    This implies that \(\varphi\) is faithfully flat by \cite[Proposition 5.1]{bhatt2018Direct} since \(A\) is a \(p\)-torsion-free Noetherian ring.
\end{proof}

Next, we show that the faithfully flat Frobenius lift of a prismatic complex gives a regular local ring.
The main technique is our theorem above (\Cref{AnimatedLCIPrism}) and (a special case of) \(p\)-adic Kunz's theorem as follows (\Cref{KunzRegular}) which is a generalization of Kunz's theorem (\Cref{KunzFrob}) in terms of perfect closure.

\begin{theorem}[\(p\)-adic Kunz's theorem; {\cite[Theorem 4.7]{bhatt2019Regular}}] \label{KunzRegular}
    Let \(R\) be a \(p\)-Zariskian Noetherian ring.
    Then \(R\) is regular if and only if \(R\) has a faithfully flat map \(R \to A\) to a perfectoid ring \(A\).
\end{theorem}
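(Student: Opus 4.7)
The plan is to treat the two implications separately, with the backward direction being substantially the harder one.

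For the forward direction, assume \(R\) is \(p\)-Zariskian Noetherian regular. The strategy is to reduce to the complete regular local case and then construct a perfectoid cover explicitly. First I would observe that it suffices, for each maximal ideal \(\mathfrak{m}\) of \(R\) (which necessarily contains \(p\) by the \(p\)-Zariskian hypothesis), to produce a faithfully flat perfectoid cover of \(\widehat{R_{\mathfrak{m}}}\): taking the \(p\)-adically completed product over all maximal ideals and precomposing gives a faithfully flat map from \(R\) to a perfectoid by standard local-to-global arguments. For a complete regular local ring \(\widehat{R_{\mathfrak{m}}}\) of residue characteristic \(p\), Cohen's structure theorem (and \Cref{617SatN}) provides a presentation \(\widehat{R_{\mathfrak{m}}} \cong C(k)[|T_1, \dots, T_d|]/(p - f)\) with \(f\) in the maximal ideal of the regular parameter ring. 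I would then build a perfectoid cover by adjoining compatible systems of \(p\)-power roots of \(p\) and of each \(T_i\) and \(p\)-adically completing; faithful flatness follows from a direct calculation on the \(p\)-adic associated graded, after reducing the ramified case \(p - f\) to the unramified model by a deformation argument.

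The backward direction is the substantive one. Given a faithfully flat \(R \to A\) with \(A\) perfectoid, after localizing at a maximal ideal \(\mathfrak{m}\) (which contains \(p\)) and completing, I would reduce to the case of a complete Noetherian local ring \(R\) admitting a faithfully flat perfectoid. The goal is to show that \(R/\mathfrak{m}\) has finite projective dimension over \(R\), whence regularity by Serre's criterion. The plan is to invoke Andr\'e's flatness lemma to enlarge \(A\) to a still faithfully flat perfectoid \(A'\) with controlled Tor-vanishing against \(R\), and to exploit that perfectoid rings furnish (big) Cohen--Macaulay algebras over \(R\) in the sense of Andr\'e and Bhatt, which pins down the depth of \(R\). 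Combining this depth information with the faithful flatness of \(R \to A'\) and a Koszul-theoretic bound on projective dimensions over perfectoid algebras should then yield finite projective dimension of the residue field.

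The main obstacle is this backward direction: one must upgrade the ``almost regular'' behavior of perfectoid rings to a genuine projective-dimension bound that descends along \(R \to A\), and this requires delicate bookkeeping of almost mathematics and derived Tor vanishing. In practice I would rely on the framework of \cite{bhatt2019Regular}, or alternatively on prismatic techniques closely related to those developed elsewhere in this paper (in particular the faithful flatness produced by \Cref{AnimatedLCIPrism}), to carry out this step rigorously.
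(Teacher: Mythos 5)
First, a point of comparison: the paper does not prove \Cref{KunzRegular} at all --- it is imported verbatim from Bhatt--Iyengar--Ma \cite[Theorem 4.7]{bhatt2019Regular} and used as a black box in the proof of \Cref{FFlatGivesRegular}. So there is no internal argument to measure your proposal against; within this paper, the citation is the proof, and nothing more is needed.

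Judged as a standalone proof, your outline has a genuine gap in the backward direction, which is the entire content of the theorem. Descent along the faithfully flat (hence pure) map \(R \to A\) can transfer regular sequences, so the big-Cohen--Macaulay input you invoke can at best show that \(R\) is Cohen--Macaulay --- and even that presupposes the big CM algebra is flat over \(R\), which is not given. Cohen--Macaulayness is far from regularity, and the step that would bridge the gap, your asserted ``Koszul-theoretic bound on projective dimensions over perfectoid algebras,'' is not an available statement: if one had an unconditional bound guaranteeing \(\operatorname{pd}_A(A/\mathfrak{m}A) < \infty\) for perfectoid \(A\), the theorem would follow in one line by flat descent of Tor-vanishing, and manufacturing a workable substitute (via the presentation \(A \cong W(A^\flat)/(\xi)\), the perfect-ring case of Kunz as in \Cref{KunzFrob}, and careful almost-mathematics bookkeeping) is precisely what \cite{bhatt2019Regular} does. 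Ending by saying you would ``rely on the framework of \cite{bhatt2019Regular}'' is therefore circular. The forward direction is closer to complete but still has real unfinished verifications: when the residue field \(k\) is imperfect, adjoining \(p\)-power roots of \(p\) and of the \(T_i\) is not enough (a perfectoid ring has semiperfect reduction mod \(p\), so you must also adjoin \(p\)-power roots of lifts of a \(p\)-basis of \(k\)); in the ramified case one must actually check that \(p-f\) remains a primitive (distinguished) element in the perfectoid cover so that the quotient is again perfectoid, and that faithful flatness survives \(p\)-adic completion (compare \Cref{CompletionFaithfullyFlat}); and the reduction to the local case needs the facts that a product of perfectoid rings is perfectoid and that the product of the local covers is faithfully flat over \(R\) (flatness of infinite products uses Noetherianity). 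These forward-direction issues are standard to repair; the backward direction as written is not a proof.
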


The following implication can be proved without assuming that \((A, I)\) is a small base prism or even more transversal or crystalline.

\begin{theorem}[Faithful flatness gives the regularity of rings] \label{FFlatGivesRegular}
    Let \((R, \mfrakm)\) be a complete Noetherian local ring of residue characteristic \(p\).
    Fix a surjective prismatic \(\delta\)-pair \((A, R)\) with a bounded prism \((A, I)\) such that \(\ker(A \twoheadrightarrow R) = (I, f_1, \dots, f_r)\) for some \(f_1, \dots, f_r \in A\) (which need not be transversal or crystalline).
    Set an animated ring \(R^{an}(\underline{f}) = R^{an}(f_1, \dots, f_r)\) as in \Cref{ConstructionAnimatedQuotient}.
    If the Frobenius lift \(\map{\pi_0(\varphi)}{\pi_0(\prism_{R^{an}(\underline{f})/A})}{\varphi_{A,*}\pi_0(\prism_{R^{an}(\underline{f})/A})}\) of the \(\delta\)-ring \(\pi_0(\prism_{R^{an}(\underline{f})/A})\) induced from the animated \(\delta\)-structure on \(\prism_{R^{an}(\underline{f})/A}\) is \(p\)-completely faithfully flat, then \(R\) is a regular local ring.
\end{theorem}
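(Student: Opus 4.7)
The plan is to reduce to \(p\)-adic Kunz's theorem (\Cref{KunzRegular}) by producing a perfectoid ring that receives a faithfully flat map from \(R\), obtained by perfecting a prism built from the prismatic complex.

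First, I would set \(B \defeq \pi_0(\prism_{R^{an}/A})\). This inherits from the animated \(\delta\)-\(A\)-algebra \(\prism_{R^{an}/A}\) (\Cref{PrismaticComplexAnimatedPrism}) the structure of a \(\delta\)-ring whose Frobenius lift is exactly \(\pi_0(\varphi)\). Since \(\prism_{R^{an}/A}\) is derived \((p, I)\)-complete, so is \(B\); because \(I\) is invertible in \(A\), \(IB\) is invertible in \(B\); and the condition \(p \in I + \varphi_A(I)A\) descends to \(B\). Thus \((B, IB)\) is a prism over \((A, I)\). By \Cref{NoetherianCase}, the map \(R \to B/IB = \pi_0(\overline{\prism}_{R^{an}/A})\) is faithfully flat.

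Next, I would form the perfection \(B_\infty\) as the \((p, IB)\)-adic completion of the colimit \(\colim(B \xrightarrow{\pi_0(\varphi)} B \xrightarrow{\pi_0(\varphi)} \cdots)\), whose Frobenius lift is an automorphism by construction. By \cite[Theorem 3.10]{bhatt2022Prismsa}, \((B_\infty, IB_\infty)\) is a perfect prism (automatically bounded by \cite[Lemma 3.8]{bhatt2022Prismsa}), so \(\overline{B}_\infty \defeq B_\infty/IB_\infty\) is a perfectoid ring. The crucial point is that \(B \to B_\infty\) is \(p\)-completely faithfully flat: the hypothesis makes each iterated Frobenius \(\pi_0(\varphi)^n\) \(p\)-completely faithfully flat, and passing to a filtered colimit and completing preserves \(p\)-completely faithful flatness (mod \(p\) the completion is transparent by bounded torsion arguments, and filtered colimits of faithfully flat ring maps are faithfully flat). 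Base change along \(B \twoheadrightarrow B/IB\) then shows \(B/IB \to \overline{B}_\infty\) is \(p\)-completely faithfully flat, and composition with the faithfully flat \(R \to B/IB\) yields that \(R \to \overline{B}_\infty\) is \(p\)-completely faithfully flat.

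Finally, since \(R\) is a Noetherian (hence \(p\)-Zariskian with bounded \(p^\infty\)-torsion) complete local ring and \(\overline{B}_\infty\) is \(p\)-adically complete, \Cref{CompletionFaithfullyFlat} upgrades \(R \to \overline{B}_\infty\) to an honest faithfully flat map of rings. Then \(p\)-adic Kunz's theorem (\Cref{KunzRegular}) forces \(R\) to be regular, and since \(R\) is local it is a regular local ring. The main obstacle I anticipate is ensuring that the prism/perfection formalism applies cleanly to \((B, IB)\): verifying that \(\pi_0\) of the animated prism \(\prism_{R^{an}/A}\) is literally a prism, and that the \((p, IB)\)-completed colimit \(B_\infty\) is genuinely a perfect prism with perfectoid reduction \(\overline{B}_\infty\); these steps rely on careful application of the results in \cite[\S 3]{bhatt2022Prismsa} on perfect prisms, and on reconciling the derived completion used to form \(B_\infty\) with the \(p\)-adic completion needed in \Cref{CompletionFaithfullyFlat}.
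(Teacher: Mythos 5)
Your overall strategy mirrors the paper's: perfect the prismatic complex to land in a perfectoid ring, use the hypothesis on the Frobenius lift together with \Cref{NoetherianCase} to get a \(p\)-completely faithfully flat map out of \(R\), upgrade it via \Cref{CompletionFaithfullyFlat}, and conclude by \(p\)-adic Kunz (\Cref{KunzRegular}). The genuine gap is at the very first step, where you try to run this at the level of \(B \defeq \pi_0(\prism_{R^{an}/A})\) and classical prisms. The assertion that ``because \(I\) is invertible in \(A\), \(IB\) is invertible in \(B\), so \((B, IB)\) is a prism'' does not follow: invertibility of \(IB\) requires a local generator \(\xi\) of \(I\) to be a non-zero-divisor on \(B\) (equivalently, \(I \otimes_A B \to B\) injective), and for \(\pi_0\) of the prismatic complex of a genuinely animated \(R^{an}\) (the non-lci case, which is the whole point of the theorem) this is not known. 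The Hodge--Tate comparison (\Cref{AnimatedLCIPrism}) only identifies \(\overline{\prism}_{R^{an}/A}\) with a completed free \(R^{an}\)-module, and since \(R^{an}\) itself has higher homotopy, the potential \(\xi\)-torsion of \(\pi_0(\prism_{R^{an}/A})\) can hide in \(\pi_1\) of the Hodge--Tate complex. This is exactly why the paper never claims that \((\pi_0(\prism_{R^{an}/A}), I\pi_0(\prism_{R^{an}/A}))\) is a prism and instead works with the animated prism \((\prism_{R^{an}/A} \to \overline{\prism}_{R^{an}/A})\) of \Cref{PrismaticComplexAnimatedPrism}.

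The later steps inherit this problem. Citing \cite[Theorem 3.10]{bhatt2022Prismsa} to conclude that \((B_\infty, IB_\infty)\) is a perfect prism is off target: that theorem is the equivalence between perfect prisms and perfectoid rings, while the perfection statement you actually need (\cite[Lemma 3.9]{bhatt2022Prismsa}) takes a prism as input, i.e., it presupposes the unproved claim about \((B, IB)\). Similarly, ``completing preserves \(p\)-completely faithful flatness by bounded torsion arguments'' is precisely the delicate point: you form a classical \((p, IB)\)-adic completion, whereas the flatness hypothesis only controls derived reductions, and nothing guarantees a priori that \(\colim_{\pi_0(\varphi)} B\) has bounded torsion. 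The paper resolves both issues simultaneously by perfecting the animated \(\delta\)-ring: the \(p\)-completed perfection is a Witt ring \(W(P)\) (\cite[Remark A.17]{bhatt2022Prismatization}), hence discrete and \(p\)-torsion-free, so \Cref{DerivedCompletionDiscrete} identifies it with the \(p\)-completion of \(\colim_{\pi_0(\varphi)} B\) and the \(p\)-completely faithfully flat structure map survives; the further \(I\)-completion is then a perfect discrete prism by \cite[Corollary 2.18]{bhatt2022Prismatization} and is invisible after base change to \(A/I\). Your outline could likely be repaired along these lines (perfect \(\delta\)-rings are \(p\)-torsion-free, and one can argue directly that \(\xi\) becomes a non-zero-divisor on the completed perfection), but as written the prism claim for \((B, IB)\), the appeal to Theorem 3.10, and the completion step are unjustified.
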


\begin{proof}
    By \Cref{PrismaticComplexAnimatedPrism}, \((\prism_{R^{an}(\underline{f})/A} \to \overline{\prism}_{R^{an}(\underline{f})/A})\) is an animated prism over \((A \to A/I)\).
    Taking the colimit in the \(\infty\)-category of derived \(p\)-complete animated \(\delta\)-rings, we have a (derived \(p\)-complete) perfect animated \(\delta\)-ring \((\colim_{\varphi} \prism_{R^{an}(\underline{f})/A})^{\wedge_p}\).
    This is actually a Witt ring \(W(P)\) for some perfect ring \(P\) by \cite[Remark A.17]{bhatt2022Prismatization}, in particular, is \(p\)-torsion-free.
    Consequently, we can apply \Cref{DerivedCompletionDiscrete} for the colimit \( M \defeq \colim_{\varphi} \prism_{R^{an}(\underline{f})/A}\) since \(M\) is connective and its derived \(p\)-completion is concentrated in degree \(0\) and \(p\)-torsion-free.
    Thus, we have an isomorphism of discrete \(A\)-algebras
    \begin{equation}
        (\colim_{\varphi} \prism_{R^{an}(\underline{f})/A})^{\wedge_p} \cong (\pi_0(\colim_{\varphi} \prism_{R^{an}(\underline{f})/A}))^{\wedge_p} \label{ColimitComplete}
    \end{equation}
    induced from the canonical map \(\colim_{\varphi} \prism_{R^{an}(\underline{f})/A} \to \pi_0(\colim_{\varphi} \prism_{R^{an}(\underline{f})/A})\), where the left-hand side is the derived \(p\)-completion and the right-hand side is the \(p\)-adic completion.
    Moreover, since \(\pi_0(\colim_{\varphi} \prism_{R^{an}(\underline{f})/A})\) is a perfect \(\delta\)-ring, it is \(p\)-torsion-free by \cite[Lemma 2.28]{bhatt2022Prismsa} and then the right-hand side of (\ref{ColimitComplete}) can be seen as the derived \(p\)-completion of \(\pi_0(\colim_{\varphi} \prism_{R^{an}(\underline{f})/A})\).

    Furthermore, taking the colimit \(\prism_{R^{an}(\underline{f})/A, \infty} \defeq (\colim_{\varphi} \prism_{R^{an}(\underline{f})/A})^{\wedge_{(p, I)}}\) in the \(\infty\)-category of derived \((p, I)\)-complete animated \(\delta\)-rings, \(\prism_{R^{an}(\underline{f})/A, \infty}\) becomes a perfect animated \(\delta\)-ring.
    As in \Cref{PrismaticComplexAnimatedPrism}, this gives a perfect animated prism \((\prism_{R^{an}(\underline{f})/A, \infty} \to \overline{\prism}_{R^{an}(\underline{f})/A, \infty} \defeq \prism_{R^{an}(\underline{f})/A, \infty}/I\prism_{R^{an}(\underline{f})/A, \infty})\) by the ``rigidity'' of prismatic structure (\cite[Corollary 2.10]{bhatt2022Prismatization}).
    Since perfect animated prisms are identified with perfect prisms by \cite[Corollary 2.18]{bhatt2022Prismatization}, this perfect animated prism \((\prism_{R^{an}(\underline{f})/A, \infty} \to \overline{\prism}_{R^{an}(\underline{f})/A, \infty})\) is actually a perfect (discrete) prism  \((\prism_{R^{an}(\underline{f})/A, \infty}, I\prism_{R^{an}(\underline{f})/A, \infty})\).
    In particular, \(\overline{\prism}_{R^{an}(\underline{f})/A, \infty}\) is a perfectoid ring.

    The \(p\)-completely faithful flatness of \(\varphi\) on \(\pi_0(\prism_{R^{an}(\underline{f})/A})\) implies that the canonical map
    \begin{equation}
        \pi_0(\prism_{R^{an}(\underline{f})/A}) \to (\colim_{\varphi} \pi_0(\prism_{R^{an}(\underline{f})/A}))^{\wedge_{p}} \cong (\colim_{\varphi} \prism_{R^{an}(\underline{f})/A})^{\wedge_{p}} \label{ColimitMap}
    \end{equation}
    is a \(p\)-completely faithfully flat map of rings, where the last isomorphism follows from (\ref{ColimitComplete}).
    Since \(\prism_{R^{an}(\underline{f})/A, \infty}\) is the derived \(I\)-completion of \((\colim_{\varphi} \prism_{R^{an}(\underline{f})/A})^{\wedge_p}\), these are isomorphic each other after taking the base change \((-) \otimes^L_A A/I\).
    Note that \(\pi_0(\overline{\prism}_{R^{an}(\underline{f})/A}) \cong \pi_0(\prism_{R^{an}(\underline{f})/A}) \otimes_A A/I\) because of the connectivity of \(\prism_{R^{an}(\underline{f})/A}\).
    By using these two isomorphisms, the base change \(\pi_0(\overline{\prism}_{R^{an}(\underline{f})/A}) \to (\colim_{\varphi} \prism_{R^{an}(\underline{f})/A})^{\wedge_{p}} \otimes^L_A A/I \cong \overline{\prism}_{R^{an}(\underline{f})/A, \infty}\) of (\ref{ColimitMap}) is also \(p\)-completely faithfully flat.

    Combining this and \Cref{NoetherianCase}, we have a \(p\)-completely faithfully flat map of rings
    \begin{equation} \label{RtoPrism}
        R \to \pi_0(\overline{\prism}_{R^{an}(\underline{f})/A}) \to \overline{\prism}_{R^{an}(\underline{f})/A, \infty}.
    \end{equation}
    Since \(R\) is Noetherian and the perfectoid ring \(\overline{\prism}_{R^{an}(\underline{f})/A, \infty}\) is \(p\)-adically complete, the map \(R \to \overline{\prism}_{R^{an}(\underline{f})/A, \infty}\) is faithfully flat by \Cref{CompletionFaithfullyFlat}.
    Then \(R\) is a regular local ring by \(p\)-adic Kunz's theorem (\Cref{KunzRegular}).
\end{proof}

Combining these theorems, we have the following equivalence which we call ``\emph{prismatic Kunz's theorem}''.

\begin{corollary}[Prismatic Kunz's theorem] \label{AnimatedCase}
    Let \((R, \mfrakm)\) be a complete\footnote{Recall that a Noetherian local ring \((R, \mfrakm)\) is a regular local ring if and only if the \(\mfrakm\)-adic completion \(\widehat{R}^{\mfrakm}\) is a regular local ring. So we can apply this theorem for any Noetherian local ring \((R, \mfrakm)\) after taking \(\mfrakm\)-adic completion.} Noetherian local ring of residue characteristic \(p\).
    Fix a surjective prismatic \(\delta\)-pair \((A, R)\) with a small base prism \((A, I)\) and fix a sequence of elements \(f_1, \dots, f_r\) in \(A\) such that \(\ker(A \twoheadrightarrow R) = (I, f_1, \dots, f_r)\).\footnote{Note that this corollary is independent of the choices of such a \(\delta\)-pair \((A, R)\) and a sequence of elements \(f_1, \dots, f_r\) because of the proof of the above theorems.}
    Set an animated ring \(R^{an}(\underline{f}) = R^{an}(f_1, \dots, f_r)\).
    Then the following are equivalent:
    \begin{enumerate}
        \item \(R\) is a (complete) regular local ring.
        \item The Frobenius lift \(\map{\varphi}{\prism_{R^{an}(\underline{f})/A}}{\varphi_{A,*}\prism_{R^{an}(\underline{f})/A}}\) of the animated \(\delta\)-\(A\)-algebra \(\prism_{R^{an}(\underline{f})/A}\) is faithfully flat.
        \item The Frobenius lift \(\map{\pi_0(\varphi)}{\pi_0(\prism_{R^{an}(\underline{f})/A})}{\varphi_{A,*}\pi_0(\prism_{R^{an}(\underline{f})/A})}\) of the \(\delta\)-ring \(\pi_0(\prism_{R^{an}(\underline{f})/A})\) induced from the animated \(\delta\)-structure on \(\prism_{R^{an}(\underline{f})/A}\) is (\(p\)-completely) faithfully flat.
    \end{enumerate}
\end{corollary}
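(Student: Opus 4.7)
The plan is to verify the cycle of implications $(3) \Rightarrow (1) \Rightarrow (2) \Rightarrow (3)$. The implication $(3) \Rightarrow (1)$ is exactly \Cref{FFlatGivesRegular}, while $(2) \Rightarrow (3)$ follows formally by passing to $\pi_0$: faithful flatness of an animated ring map induces faithful flatness on $\pi_0$, and since $\pi_0(\prism_{R^{an}/A})$ is derived $(p,I)$-complete, hence $p$-adically complete, ordinary faithful flatness is in particular $p$-completely faithfully flat.

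For $(1) \Rightarrow (2)$, I would first exploit the small base prism hypothesis: $A/I$ is a Noetherian local integral domain of dimension $\dim A - 1 \leq \emdim R$. When $R$ is regular, $\dim R = \emdim R$ and the surjection $A/I \twoheadrightarrow R$ between Noetherian local domains forces $\dim R \leq \dim A/I$. Combined, $A/I \twoheadrightarrow R$ must be an isomorphism between Noetherian local integral domains of equal dimension, so $\ker(A \twoheadrightarrow R) = I$ and every $f_i$ lies in $I$. I would then reduce to the minimal choice $r = 0$ by invoking the footnoted independence from the sequence $f_1, \ldots, f_r$. In this case $R^{an} = A/I \cong R$ and $\prism_{R^{an}/A} = A$ by \cite[Example V.2.11]{bhatt2018Geometric}, so condition (2) collapses to asserting that $\varphi_A : A \to \varphi_{A,*}A$ is faithfully flat. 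This is precisely \Cref{RegularGivesFFlat}: classical Kunz's theorem applied to the regular local ring $A/pA$ (whose regularity follows from \Cref{maintheoremDeformation}) yields faithful flatness of the mod-$p$ Frobenius, \Cref{TransversalFFlatEquiv} promotes it to $p$-complete faithful flatness of $\varphi_A$, and the Noetherian, $p$-torsion-free nature of $A$ upgrades the conclusion to ordinary faithful flatness via \cite[Proposition 5.1]{bhatt2018Direct}.

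The main obstacle is justifying the independence of condition (2) from the choice of $(A, I)$ and $f_1, \ldots, f_r$, since $\prism_{R^{an}/A}$ genuinely depends on the derived quotient data when $r > 0$. The cleanest route is to first establish the two-way equivalence $(1) \Leftrightarrow (3)$ for every valid choice---this reduces to enhancing $(1) \Rightarrow (3)$ by a $\pi_0$-level version of \Cref{RegularGivesFFlat} applicable to arbitrary $f_i \in I$, and noting that $\pi_0(\prism_{R^{an}/A})$ still receives the Frobenius-compatible structure map from $A$---and then obtain $(1) \Rightarrow (2)$ in each case by reduction to the minimal choice. Alternatively, one may use the retraction $A \to \prism_{R^{an}/A} \to A$ induced by the maps $A/I = R \to R^{an} \to A/I = R$, combined with the Hodge-Tate filtration computation of \Cref{AnimatedLCIPrism} (which identifies $\overline{\prism}_{R^{an}/A}$ as a derived $p$-completion of a free $R^{an}$-module), to directly transfer the faithful flatness of $\varphi_A$ on $A$ to that of $\varphi$ on $\prism_{R^{an}/A}$.
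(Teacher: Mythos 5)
Your proposal follows the paper's proof essentially verbatim: the paper establishes exactly your cycle, with (1) \(\Rightarrow\) (2) given by \Cref{RegularGivesFFlat} (the small-base-prism dimension count forcing \(A/I \xrightarrow{\sim} R\) and \(\prism_{R/A} \cong A\), then classical Kunz via \Cref{EquivRegularPrism}, upgraded by \Cref{TransversalFFlatEquiv} and \cite[Proposition 5.1]{bhatt2018Direct}), (2) \(\Rightarrow\) (3) immediate from the definition of faithful flatness of animated ring maps (\Cref{PropOfAnimatedModule}), and (3) \(\Rightarrow\) (1) given by \Cref{FFlatGivesRegular}. The only place you diverge is your discussion of independence from the chosen sequence \(f_1,\dots,f_r\): appealing to the footnote there is circular (the footnote records a consequence of the proof, not an input), and the sketched alternatives (a \(\pi_0\)-level variant, or transferring faithful flatness along the retraction \(A \to \prism_{R^{an}/A} \to A\)) are not carried out — but the paper's own proof does nothing more at this point either, simply citing \Cref{RegularGivesFFlat} (i.e., working with \(\prism_{R/A} \cong A\)), so your extra worry, while a fair observation about the role of the sequence when \(r>0\) and \(f_i \in I\), does not separate your argument from the paper's.
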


\begin{proof}
    \IMPLIES{(1)}{(2)} is proved in \Cref{RegularGivesFFlat} and \IMPLIES{(2)}{(3)} is trivial by the definition of the faithful flatness of maps of animated rings (\Cref{PropOfAnimatedModule} (3)).
    \IMPLIES{(3)}{(1)} is proved in \Cref{FFlatGivesRegular}.
\end{proof}

If \(R\) is a complete intersection, we do not need to use the notion of animated rings and animated prisms as follows.

\begin{lemma} \label{DiscretePrismaticComplex}
    Let \(R\) be a complete Noetherian local ring of residue characteristic \(p\).
    Assume further that \(R\) is a complete intersection.
    Fix a surjective prismatic \(\delta\)-pair \((A, R)\) with a complete regular local prism \((A, I)\).
    Since \(R\) is a complete intersection, we can fix a representation \(\ker(A \twoheadrightarrow R) = (I, f_1, \dots, f_r)\) such that \(f_1, \dots, f_r\) is a regular sequence on \(A/I\) by \citeSta{09PZ}.
    Set an animated ring \(R^{an}(\underline{f}) = R^{an}(f_1, \dots, f_r)\).
    Then we have the following.
    \begin{enumalphp}
        \item The animated ring \(R^{an}(\underline{f})\) is actually a usual ring \(R\).
        \item The animated prism \((\prism_{R^{an}(\underline{f})/A} \to \overline{\prism}_{R^{an}(\underline{f})/A})\) is a bounded orientable discrete prism \((\prism_{R/A}, I\prism_{R/A})\).
        \item If \(R\) is \(p\)-torsion-free, then \((\prism_{R/A}, I\prism_{R/A})\) is transversal.
        \item If \(R\) is of characteristic \(p\), then \((\prism_{R/A}, I\prism_{R/A})\) is crystalline.
    \end{enumalphp}
\end{lemma}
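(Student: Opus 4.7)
The plan is to handle (a)--(d) in order, leveraging the Hodge-Tate comparison and the computation already carried out in the proof of \Cref{AnimatedLCIPrism}. For (a), since $f_1, \dots, f_r$ is a regular sequence on $A/I$, the Koszul complex $\Kos(A/I; f_1, \dots, f_r)$ representing $R^{an}$ is a resolution of $R$ (acyclic in positive degrees with $\pi_0 = R$); hence $R^{an} \simeq R$ as animated $A/I$-algebras. This amounts to unwinding the definition of the derived quotient.

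For (b), I would invoke the computation in the proof of \Cref{AnimatedLCIPrism}, which exhibits $\overline{\prism}_{R^{an}/A}$ as the derived $p$-completion of a free animated $R^{an}$-module isomorphic to $\bigoplus_{i \geq 0} (R^{an})^{\oplus D_i}\{-i\}$. By (a) the summands are discrete, and since $R$ is Noetherian the derived $p$-completion of a free $R$-module coincides with the classical $p$-adic completion, which is discrete. Thus $\overline{\prism}_{R/A}$ is discrete. To propagate discreteness up to $\prism_{R/A}$, I would use the fiber sequence $\prism_{R/A} \xrightarrow{\xi} \prism_{R/A} \to \overline{\prism}_{R/A}$ for an orientation $\xi$ of $I$: the vanishing of $\pi_i(\overline{\prism}_{R/A})$ for $i \geq 1$ shows that multiplication by $\xi$ is an isomorphism on each $\pi_i(\prism_{R/A})$ for $i \geq 1$, and derived Nakayama --- applicable since each homotopy group inherits derived $\xi$-completeness from $\prism_{R/A}$ --- forces $\pi_i(\prism_{R/A}) = 0$. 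I would then verify that $(\prism_{R/A}, I\prism_{R/A})$ is a bounded orientable prism: non-zero-divisibility of $\xi$ follows from the fiber sequence together with $\pi_1(\overline{\prism}_{R/A}) = 0$; boundedness of $\overline{\prism}_{R/A}$ comes from its description as the $p$-adic completion of a free module over the Noetherian ring $R$; and the condition $p \in I + \varphi(I)A$ is inherited from $(A, I)$.

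For (c), when $R$ is $p$-torsion-free, the $p$-adic completion of a free $R$-module is again $p$-torsion-free, so $\overline{\prism}_{R/A}$ is $p$-torsion-free. Hence $\xi, p$ is a regular sequence on $\prism_{R/A}$, and transversality follows from \Cref{EquivRegularSeq}. For (d), $R$ being of characteristic $p$ gives $p = 0$ in the $R$-algebra $\overline{\prism}_{R/A}$, so $p \in I\prism_{R/A}$. Writing $p = \xi a$ in $\prism_{R/A}$ and applying $\delta$, the identity $1 = \xi^p \delta(a) + a^p \delta(\xi) + p\delta(\xi)\delta(a)$ reduces modulo $\xi\prism_{R/A}$ to $\overline{a}^p \overline{\delta(\xi)} \equiv 1$; since $\delta(\xi)$ is a unit ($\xi$ being distinguished), $\overline{a}$ is a unit in $\overline{\prism}_{R/A}$. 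Because $\prism_{R/A}$ is $(p, I)$-Zariskian by \Cref{LemmaDerivedComplete}, $a$ itself is a unit, yielding $I\prism_{R/A} = (\xi) = (p)$ and hence crystallinity.

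The main obstacle I anticipate is the discreteness argument in (b), specifically propagating discreteness from $\overline{\prism}_{R/A}$ up to $\prism_{R/A}$: this requires care with animated/derived conventions, tracking the relevant higher homotopy groups, and verifying that derived Nakayama applies to each $\pi_i(\prism_{R/A})$ separately. Once discreteness is in hand, (c) and (d) reduce to routine $\delta$-ring manipulations together with the already-proven \Cref{EquivRegularSeq} and \Cref{LemmaDerivedComplete}.
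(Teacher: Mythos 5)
Your proposal is correct and follows the same skeleton as the paper's proof: (a) via regularity of \(f_1,\dots,f_r\) on \(A/I\), then the identification of \(\overline{\prism}_{R/A}\) with the \(p\)-adic completion of a free \(R\)-module via \Cref{AnimatedLCIPrism}, then the transversal/crystalline checks. The one substantive difference is in (b): where the paper simply cites \cite[Lemma 7.7 (3)]{bhatt2022Prismsa} to propagate discreteness (and the prism structure) from \(\overline{\prism}_{R/A}\) to \(\prism_{R/A}\), you reprove that step by hand via the fiber sequence \(\prism_{R/A}\xrightarrow{\xi}\prism_{R/A}\to\overline{\prism}_{R/A}\), invertibility of \(\xi\) on \(\pi_i\) for \(i\geq 1\), and derived Nakayama applied to the derived \((p,I)\)-complete homotopy groups; this is a correct in-house substitute for the citation, and your injectivity-of-\(\xi\) and boundedness checks (the latter also obtainable from \Cref{CompFlatBoundedTorsion} together with \Cref{NoetherianCase}) are fine. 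Your treatment of (d) is actually more detailed than the paper's one-line ``\(\xi\) can be taken as \(p\)''; the only slip there is the displayed identity: \(\delta(p)=1-p^{p-1}\), not \(1\), so the correct relation is \(1-p^{p-1}=\xi^p\delta(a)+a^p\delta(\xi)+p\delta(a)\delta(\xi)\). Since \(p\in\xi\prism_{R/A}\), this still reduces mod \(\xi\) to \(\overline{a}^{\,p}\,\overline{\delta(\xi)}\equiv 1\), so your conclusion via \Cref{LemmaDerivedComplete} that \(a\) is a unit, hence \(I\prism_{R/A}=(p)\), stands.
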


\begin{proof}
    Since \(f_1, \dots, f_r\) is a regular sequence on \(A/I\), the animated ring \(R^{an}(\underline{f})\) does not have higher homotopy groups and thus is isomorphic to a usual ring \(R\). This shows (a).

    By \Cref{AnimatedLCIPrism}, the Hodge--Tate complex \(\overline{\prism}_{R^/A} \cong \widehat{\oplus R}\) is the \(p\)-adic completion of a free \(R\)-module and in particular discrete.
    By \cite[Lemma 7.7 (3)]{bhatt2022Prismsa}, \(\prism_{R/A}\) is also concentrated in degree \(0\) and makes an orientable prism \((\prism_{R/A}, \xi \prism_{R/A})\) where \(\xi\) is an orientation of \(I\) in \(A\). This shows (b).
    Since \(\xi \prism_{R/A}\) is locally free of rank \(1\) on \(\prism_{R/A}\), \(\xi\) is a non-zero-divisor of \(\prism_{R/A}\).
    If \(R\) is \(p\)-torsion-free, \(\overline{\prism}_{R/A} \cong \widehat{\oplus R}\) is also \(p\)-torsion-free.
    Thus \((\prism_{R/A}, I\prism_{R/A})\) is transversal by \Cref{DefofTransversal} and this shows (c).
    If \(R\) is of characteristic \(p\), \(\xi\) can be taken as \(p\) and this shows (d).
\end{proof}

Under the complete intersection assumption as in \Cref{DiscretePrismaticComplex}, the above theorem deduces the next corollary.

\begin{corollary}[lci case] \label{PrismaticKunzPrismLCI}
    Let \(R\) be a complete Noetherian local ring of residue characteristic \(p\).
    Assume further that \(R\) is a complete intersection.
    Fix a surjective prismatic \(\delta\)-pair \((A, R)\) with a complete regular local small base prism \((A, I)\).
    Then the following are equivalent:
    \begin{enumerate}
        \item \(R\) is a (complete) regular local ring.
        \item The Frobenius lift \(\map{\varphi}{\prism_{R/A}}{\varphi_{A,*}\prism_{R/A}}\) of the \(\delta\)-ring \(\prism_{R/A}\) (see \Cref{DiscretePrismaticComplex}) is (\(p\)-completely) faithfully flat.
    \end{enumerate}
    If we assume further that \(R\) is \(p\)-torsion-free or of characteristic \(p\), then the following conditions are also equivalent:
    \begin{enumerate}
        \item \(R\) is a regular local ring.
        \item \(\varphi\) is faithfully flat.
        \item \(\varphi\) is \(p\)-completely faithfully flat.
        \item \(\varphi\) is \(I\)-completely faithfully flat.
        \item \(\varphi\) is \((p, I)\)-completely faithfully flat.
        \item The Frobenius map \(\map{F}{\prism_{R/A}/(p)}{F_*(\prism_{R/A}/(p))}\) is faithfully flat.
        \item The map \(\map{\overline{\varphi}_I}{\prism_{R/A}/I}{\varphi_*(\prism_{R/A}/\varphi(I))}\) induced from \(\varphi\) is faithfully flat.
        \item The \(p\)-th power map \(\map{\overline{\varphi}_{(p,I)}}{\prism_{R/A}/(p, I)}{\varphi_*(\prism_{R/A}/(p, I^{[p]}))}\) induced from \(\varphi\) is faithfully flat.
    \end{enumerate}
\end{corollary}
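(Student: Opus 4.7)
The plan is to treat this corollary as a straightforward consequence of \Cref{AnimatedCase}, \Cref{TransversalFFlatEquiv}, and the collapse of the animated formalism provided by \Cref{DiscretePrismaticComplex}. The key reduction is that, under the complete intersection hypothesis, the prismatic complex and its Frobenius lift are already discrete, so the ``animated'' flatness statements reduce to classical flatness statements about a $\delta$-ring.

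First I would invoke \Cref{DiscretePrismaticComplex} to fix the setup: choose $f_1, \dots, f_r$ forming a regular sequence on $A/I$ with $\ker(A \twoheadrightarrow R) = (I, f_1, \dots, f_r)$, which is possible since $R$ is a complete intersection by \citeSta{09PZ}. Then $R^{an} \simeq R$ as an animated ring, $\prism_{R/A}$ is a discrete orientable prism, and it is transversal (resp.\ crystalline) when $R$ is $p$-torsion-free (resp.\ of characteristic $p$). In particular $\pi_0(\prism_{R/A}) = \prism_{R/A}$, and the Frobenius lift $\varphi$ appearing in (2) coincides with $\pi_0(\varphi)$.

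For the first equivalence, the implication (1) $\Rightarrow$ (2) with ``faithfully flat'' follows directly from \Cref{RegularGivesFFlat}: regularity of $R$ forces $A/I \cong R$ by the small base prism inequality $\dim(A/I) = \dim(A) - 1 \leq \emdim(R) = \dim(R)$, so $\prism_{R/A} \cong A$ is a Noetherian unramified regular local ring and classical Kunz (\Cref{KunzFrob}) applied to $A/pA$ (regular by \Cref{EquivRegularPrism}) together with \cite[Proposition 5.1]{bhatt2018Direct} yields honest faithful flatness of $\varphi$. The converse direction with ``$p$-completely faithfully flat'' $\Rightarrow$ (1) is exactly \Cref{AnimatedCase} specialized to this discrete case. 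Since ``faithfully flat'' trivially implies ``$p$-completely faithfully flat'', both readings of (2) are equivalent to (1).

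For the second, longer list of equivalences under the $p$-torsion-free or characteristic $p$ hypothesis, conditions (3)--(8) correspond termwise to conditions (1)--(6) of \Cref{TransversalFFlatEquiv} applied to the transversal or crystalline prism $(\prism_{R/A}, I\prism_{R/A})$, hence are pairwise equivalent. The first equivalence supplies (1) $\Leftrightarrow$ (2), (2) $\Rightarrow$ (3) is immediate, and (3) $\Rightarrow$ (1) is again \Cref{AnimatedCase}, closing the cycle. The main obstacle is not any difficult computation but rather the verification that the complete intersection hypothesis genuinely reduces the statement to the discrete setting where \Cref{TransversalFFlatEquiv} applies verbatim; this is handled cleanly by \Cref{DiscretePrismaticComplex}, so the proof is essentially bookkeeping.
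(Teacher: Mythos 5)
Your proposal is correct and follows essentially the same route as the paper: use \Cref{DiscretePrismaticComplex} to collapse to the discrete setting, obtain the first equivalence from \Cref{AnimatedCase} (whose forward direction is exactly the \Cref{RegularGivesFFlat} argument you unpack), and get the second list by matching conditions (3)--(8) with \Cref{TransversalFFlatEquiv} for the transversal or crystalline prism \((\prism_{R/A}, I\prism_{R/A})\). The extra bookkeeping you include (faithfully flat \(\Rightarrow\) \(p\)-completely faithfully flat, closing the cycle through \Cref{AnimatedCase}) is exactly what the paper leaves implicit, so there is no gap.
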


\begin{proof}
    The first equivalence is a direct consequence of \Cref{AnimatedCase} and \Cref{DiscretePrismaticComplex}.

    If \(R\) is \(p\)-torsion-free or of characteristic \(p\), \((\prism_{R/A}, I\prism_{R/A})\) is a transversal or crystalline prism by \Cref{DiscretePrismaticComplex}.
    So the equivalence in this setting is a direct consequence of \Cref{CompletionFaithfullyFlat} and \Cref{AnimatedCase}.
\end{proof}

If we do not assume the property that \((A, I)\) is a small base (or minimal) prism, our theorem does not hold in general.
The following is a counterexample suggested by Bhargav Bhatt.

\begin{remark}[Bhatt] \label{CounterExample}
    If we do not assume the assumption that \((A, I)\) is a small base prism, the above \Cref{AnimatedCase} and \Cref{PrismaticKunzPrismLCI} may fail.
    Here is a counterexample.

    For \(R \defeq \setF_p\), the surjective map \(A \defeq \setZ_p[|T|] \twoheadrightarrow \setF_p; T \mapsto 0\) gives a surjective prismatic \(\delta\)-pair \((\setZ_p[|T|], \setF_p)\) with a complete regular local prism \((\setZ_p[|T|], (p))\), where the \(\delta\)-structure (equivalently, the Frobenius lift \(\varphi\)) on \(\setZ_p[|T|]\) is given by \(\restr{\varphi}{\setZ_p} = \id_{\setZ_p}\) and \(\varphi(T) = T^p\).
    However, \((\setZ_p[|T|], (p))\) is not a small base prism and not even minimal because of \(\dim(\setZ_p[|T|]) = 2 > \emdim(\setF_p) + 1 = 1\).
    In this case, \(R\) is a regular local ring but the Frobenius lift \(\varphi\) on \(\prism_{R/A}\) is not faithfully flat:
    since \(T \in \setZ_p[|T|] = A\) is \emph{\((p)\)-completely regular relative to \(A_0 \defeq \setZ_p\)}, that is, \(\setF_p \cong A_0/^L(p) \to A/^L(p, T) \cong \setF_p\) is a flat map of (animated) rings (see \cite[Definition 2.42]{bhatt2022Prismsa}), \(\prism_{R/A}\) is the prismatic envelope (or the derived \(p\)-completed \(\delta\)-\(A\)-algebra obtained by freely adjoining \(T/p\))
    \begin{equation*}
        \prism_{R/A} = A\bracelr{\frac{(p, T)}{(p)}}^\wedge \cong A\bracelr{\frac{T}{p}}^\wedge
    \end{equation*}
    by \cite[Proposition 3.13 and Example 7.9]{bhatt2022Prismsa}.
    Taking \(p\)-completed \(\varphi_A\)-pullback \(R^{(1)} \cong A/(p, T^p)\) gives isomorphisms of \(\delta\)-\(A\)-algebras
    \begin{equation*}
        \varphi_A^* \prism_{R/A} \cong \prism_{R^{(1)}/A} \cong \prism_{(A/(p, T^p))/A} \cong A\bracelr{\frac{T^p}{p}}^\wedge \cong A\bracelr{\frac{\varphi_A(T)}{p}} \cong D_{(T)}(A),
    \end{equation*}
    where \(D_{(T)}(A)\) is the pd-envelope\footnote{The basic knowledge of \emph{pd-envelopes} (or \emph{divided power envelopes}) as sufficient for our purposes is summarized in \cite[Lecture VI]{bhatt2018Geometric}. Explicitly, \(D_{(T)}(A)\) is the \(p\)-adic completion of the subring \(A[\gamma_n(T)]_{n \geq 1}\) of \(A[1/p]\) generated by \(A\) and \(\gamma_n(T) \defeq T^n/n!\).} of \((T) \subseteq A\) and the last isomorphism is deduced from \cite[Corollary 2.39]{bhatt2022Prismsa}.
    If the Frobenius lift \(\varphi\) of \(\prism_{R/A}\) is faithfully flat, the Frobenius lift of \(D_{(T)}(A)\) which is given by \(\varphi(\gamma_n(T)) = \gamma_n(T^p)\) is also faithfully flat by the above isomorphisms of \(\delta\)-\(A\)-algebras.
    However, the Frobenius lift of \(D_{(T)}(A)\) is not faithfully flat because \(D_{(T)}(A)/(p)\) has a non-zero nilpotent element \(T\).
    So the Frobenius lift \(\varphi\) of \(\prism_{R/A}\) is not faithfully flat.
\end{remark}

\subsection{Applications} \label{Application}
First, similar to Kunz's theorem, the regularity of prisms is characterized by the faithful flatness of the Frobenius lift.
We give two proofs of this corollary, one is a direct consequence of \Cref{AnimatedCase}, and the other is a direct consequence of (classical) Kunz's theorem.

\begin{proposition} \label{PrismaticKunzCor}
    Let \((A, I)\) be a prism such that \(A\) is a Noetherian local ring with the maximal ideal \(\mfrakm\).
    Let \(\map{\varphi_A}{A}{\varphi_{A,*} A}\) be the Frobenius lift of \(A\).
    Then the following are equivalent:
    \begin{enumerate}
        \item \((A, I)\) is a regular local prism. \label{PrismaticKunzCor-RegPrism}
        \item \(\varphi_A\) is faithfully flat. \label{PrismaticKunzCor-FFlat}
        \item \(\varphi_A\) is \(p\)-completely faithfully flat. \label{PrismaticKunzCor-pFFlat}
        \item \(\varphi_A\) is \(I\)-completely faithfully flat. \label{PrismaticKunzCor-IFFlat}
        \item The Frobenius map \(\map{F}{A/pA}{F_* (A/pA)}\) is faithfully flat. \label{PrismaticKunzCor-FrobFFlat}
        \item The prism \((A, I)\) is transversal or crystalline, and the map \(\map{\overline{\varphi}_{A,I}}{A/I}{\varphi_* (A/\varphi(I))}\) induced from \(\varphi_A\) is faithfully flat. \label{PrismaticKunzCor-ModIFFlat}
        \item The prism \((A, I)\) is transversal or crystalline, and the \(p\)-th power map \(\map{\overline{\varphi}_{A,(p,I)}}{A/(p, I)}{\varphi_{A,*}(A/(p, I^{[p]}))}\) induced from \(\varphi_A\) is faithfully flat. \label{PrismaticKunzCor-ModIpFFlat}
    \end{enumerate}
\end{proposition}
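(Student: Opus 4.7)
The strategy is to establish (1) $\Leftrightarrow$ (\ref{PrismaticKunzCor-FrobFFlat}) directly using classical Kunz's theorem (\Cref{KunzFrob}) and the deformation result \Cref{EquivRegularPrism}, and then to derive the remaining equivalences from \Cref{TransversalFFlatEquiv} and \Cref{CompletionFaithfullyFlat}. For (\ref{PrismaticKunzCor-RegPrism}) $\Leftrightarrow$ (\ref{PrismaticKunzCor-FrobFFlat}): since $A$ is Noetherian, the prism $(A, I)$ is automatically bounded, so \Cref{EquivRegularPrism} identifies the regularity of $(A, I)$ with the regularity of $A/pA$, and classical Kunz's theorem identifies the latter with the faithful flatness of the Frobenius $F\colon A/pA \to F_*(A/pA)$.

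Next, assuming (\ref{PrismaticKunzCor-RegPrism}) (equivalently (\ref{PrismaticKunzCor-FrobFFlat})), \Cref{EquivRegularPrism} also furnishes that $(A, I)$ is transversal or crystalline, so \Cref{TransversalFFlatEquiv} supplies (\ref{PrismaticKunzCor-pFFlat}), (\ref{PrismaticKunzCor-IFFlat}), (\ref{PrismaticKunzCor-ModIFFlat}), and (\ref{PrismaticKunzCor-ModIpFFlat}) as equivalent reformulations of (\ref{PrismaticKunzCor-FrobFFlat}); since $A$ is moreover Noetherian and $p$-adically complete (being Noetherian and derived $(p, I)$-complete), \Cref{CompletionFaithfullyFlat} upgrades the $p$-completely faithfully flat statement (\ref{PrismaticKunzCor-pFFlat}) to the fully faithfully flat statement (\ref{PrismaticKunzCor-FFlat}).

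For the reverse implications back to (\ref{PrismaticKunzCor-FrobFFlat}) (and hence to (\ref{PrismaticKunzCor-RegPrism})), most are direct: (\ref{PrismaticKunzCor-FFlat}) $\Rightarrow$ (\ref{PrismaticKunzCor-pFFlat}) is trivial; (\ref{PrismaticKunzCor-pFFlat}) $\Rightarrow$ (\ref{PrismaticKunzCor-FrobFFlat}) follows from the definition of $p$-complete faithful flatness by passing to $\pi_0$ of the derived base change along $A \to A/pA$, using $\varphi(p) = p$ to identify this reduction with $F$; and (\ref{PrismaticKunzCor-ModIFFlat}), (\ref{PrismaticKunzCor-ModIpFFlat}) $\Rightarrow$ (\ref{PrismaticKunzCor-FrobFFlat}) are immediate from \Cref{TransversalFFlatEquiv} thanks to the transversal or crystalline hypothesis already built into these conditions. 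The main obstacle is (\ref{PrismaticKunzCor-IFFlat}) $\Rightarrow$ (\ref{PrismaticKunzCor-RegPrism}), whose hypothesis carries no transversality a priori; here I would mimic the strategy of \Cref{FFlatGivesRegular}, iterating the $I$-completely faithfully flat Frobenius $\varphi_A$, forming the derived $(p, I)$-completed colimit $A_\infty$ to obtain a perfect prism whose Hodge-Tate quotient $A_\infty/IA_\infty$ is perfectoid by \cite[Lemmas 3.9 and 3.10]{bhatt2022Prismsa}, observing that the induced map $A/I \to A_\infty/IA_\infty$ is $p$-completely faithfully flat (by iteration of the faithful flatness of $\overline{\varphi}_{A,I}$ and stability of $p$-complete faithful flatness under $p$-completed filtered colimits) and hence faithfully flat via \Cref{CompletionFaithfullyFlat} since $A/I$ is Noetherian and $p$-adically complete, and finally applying $p$-adic Kunz (\Cref{KunzRegular}) to conclude $A/I$ is regular, which returns to (\ref{PrismaticKunzCor-RegPrism}) through \Cref{EquivRegularPrism}.
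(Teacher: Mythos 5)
Most of what you write tracks the paper's second (``another'') proof: the equivalence of (1) and (5) via classical Kunz's theorem together with \Cref{EquivRegularPrism}, the forward implications through \Cref{TransversalFFlatEquiv} once one knows a regular local prism is transversal or crystalline, the passage from \(p\)-completely faithfully flat to honestly faithfully flat using that \(A\) is Noetherian and adically complete (the paper quotes \cite[Proposition 5.1]{bhatt2018Direct} where you invoke \Cref{CompletionFaithfullyFlat}; both work), and the reverse implications from (6), (7), (3) back to (5). Your omission of the reduction to the \(\mathfrak{m}\)-adically complete case is harmless on this route, since nothing in it needs more than derived \((p,I)\)-completeness.

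The genuine gap is your treatment of (4) \(\Rightarrow\) (1). Condition (4) only controls \(\varphi_A\) modulo \(I\): it gives faithful flatness of \(A/I \to \varphi_*(A/\varphi(I)A)\). But when you iterate to form \(\operatorname{colim}_\varphi A\) and reduce modulo \(I\), the later transition maps are \(A/\varphi^k(I)A \to \varphi_*(A/\varphi^{k+1}(I)A)\), i.e.\ the reduction of \(\varphi_A\) modulo the \emph{moving} ideals \(\varphi^k(I)A\), and faithful flatness of these does not follow from (4). (This is exactly the point where the \(p\)-complete argument of \Cref{FFlatGivesRegular} works and your \(I\)-complete variant does not: \(\varphi(p)=p\), whereas \(\varphi(I)\neq I\) outside the crystalline case.) Hence your claim that \(A/I \to A_\infty/IA_\infty\) is (\(p\)-completely) faithfully flat is unjustified as stated, and the whole perfectoid/\(p\)-adic Kunz detour rests on it. The repair is the paper's own move: (4) says \(\varphi_{A,*}A \otimes^L_A A/I\) is discrete and faithfully flat over \(A/I\); since \(A\) is Noetherian and derived \((p,I)\)-complete, hence \(I\)-adically complete, and \(I\) is generated by a non-zero-divisor orientation \(\xi\) (as \(A\) is local), this upgrades directly to honest faithful flatness of \(\varphi_A\) by \cite[Proposition 5.1]{bhatt2018Direct} or \cite[Theorem 1.5]{yekutieli2018Flatness} (equivalently, \Cref{CompletionFaithfullyFlat} applied with \(\pi=\xi\)). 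Thus (4) \(\Rightarrow\) (2), and then (2) \(\Rightarrow\) (3) \(\Rightarrow\) (5) \(\Rightarrow\) (1) closes the loop with classical Kunz only, with no need of \Cref{KunzRegular} or the perfection \(A_\infty\) at all.
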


\begin{proof}
    The \(\mfrakm\)-adic completion \(\widehat{A}^{\mfrakm}\) of \(A\) also makes a prism \((\widehat{A}^{\mfrakm}, I\widehat{A}^{\mfrakm})\) because of \cite[Lemma 2.17]{bhatt2022Prismsa} and the faithfully flat map of rings \(A \to \widehat{A}^{\mfrakm}\).
    So \((A, I)\) is a regular local prism if and only if \((\widehat{A}^{\mfrakm}, I\widehat{A}^{\mfrakm})\) is a complete regular local prism.
    Similarly, the Frobenius lift \(\map{\varphi_A}{A}{\varphi_{A,*} A}\) is faithfully flat if and only if so is \(\map{\varphi_{\widehat{A}^{\mfrakm}}}{\widehat{A}^{\mfrakm}}{\varphi_{\widehat{A}^{\mfrakm},*} \widehat{A}^{\mfrakm}}\).
    Without loss of generality, we can assume that \(A\) is a complete Noetherian local ring.
    Note that \(\prism_{(A/I)/A} \cong A\).
    
    We first show \IFF{(\ref{PrismaticKunzCor-RegPrism})}{(\ref{PrismaticKunzCor-FFlat})}.
    If \((A, I)\) is a regular local prism, \(A/I\) is a (complete) regular local ring by \Cref{EquivRegularPrism}.
    In particular, \(A/I\) is an integral domain and thus we have a surjective prismatic \(\delta\)-pair \((A, A/I)\) with a small base prism \((A, I)\).
    Applying \Cref{AnimatedCase}, the Frobenius lift \(\varphi = \varphi_A\) of \(\prism_{(A/I)/A} \cong A\) is faithfully flat.

    Conversely, if \(\varphi_A\) is faithfully flat, the isomorphism \(\prism_{(A/I)/A} \cong A\) also shows that \(\map{\varphi}{\prism_{(A/I)/A}}{\varphi_{A,*}\prism_{(A/I)/A}}\) is faithfully flat.
    Again applying \Cref{AnimatedCase}, \(A/I\) is a regular local ring, and thus \((A, I)\) is a regular local prism.

    Other equivalences are straightforward.
    By \Cref{TransversalFFlatEquiv}, it is sufficient to show \IMPLIES{(\ref{PrismaticKunzCor-FrobFFlat})}{(\ref{PrismaticKunzCor-RegPrism})}, which is a direct consequence of \Cref{EquivRegularPrism}: If (\ref{PrismaticKunzCor-FrobFFlat}) holds, \(A/pA\) is a regular local ring by Kunz's theorem and thus \((A, I)\) is a regular local prism.

    (Another proof): While the above proof uses \Cref{AnimatedCase}, we can deduce this theorem by using Kunz's theorem (\Cref{KunzFrob}).
    Here is such a proof.
    We can assume that \((A, I)\) is a bounded orientable prism since \(A\) is a Noetherian local ring and \(I\) is an invertible ideal.
    We show the following implications:
    \begin{center}
        \begin{tikzcd}
            (\ref{PrismaticKunzCor-RegPrism}) \arrow[r, Leftrightarrow] \arrow[d, Leftrightarrow] & (\ref{PrismaticKunzCor-pFFlat}) \arrow[r, Leftrightarrow] & (\ref{PrismaticKunzCor-FFlat}) \arrow[d, Rightarrow]  & (\ref{PrismaticKunzCor-IFFlat}) \arrow[l, Leftrightarrow] \\
            (\ref{PrismaticKunzCor-FrobFFlat}) &  & (\ref{PrismaticKunzCor-ModIFFlat}) \arrow[r, Rightarrow] & (\ref{PrismaticKunzCor-ModIpFFlat}). \arrow[u, Rightarrow]
        \end{tikzcd}
    \end{center}

    The equivalence (\ref{PrismaticKunzCor-pFFlat}) \(\Leftrightarrow\) (\ref{PrismaticKunzCor-FFlat}) \(\Leftrightarrow\) (\ref{PrismaticKunzCor-IFFlat}) follows from the definition of completely flatness and \cite[Proposition 5.1]{bhatt2018Direct} since \(A\) is Noetherian and \(p\)-torsion-free: We use the fact that, if \(\varphi_A\) is \(p\)-completely faithfully flat, the derived tensor product \(\varphi_{A,*}A \otimes^L_A A/pA\) is concentrated in degree \(0\) and thus \(A\) is \(p\)-torsion-free.

    (\ref{PrismaticKunzCor-RegPrism}) \(\Leftrightarrow\) (\ref{PrismaticKunzCor-FrobFFlat}): This equivalence is deduced from Kunz's theorem and \Cref{EquivRegularPrism}.

    (\ref{PrismaticKunzCor-RegPrism}) \(\Rightarrow\) (\ref{PrismaticKunzCor-pFFlat}): Since \(A/pA\) is a regular local ring by \Cref{EquivRegularPrism}, the Frobenius map \(\map{F}{A/p A}{A/p A}\) is faithfully flat by Kunz's theorem.
    Note that any regular local prism is transversal or crystalline.
    In both cases, \Cref{TransversalFFlatEquiv} shows (\ref{PrismaticKunzCor-RegPrism}) \(\Rightarrow\) (\ref{PrismaticKunzCor-pFFlat}).

    (\ref{PrismaticKunzCor-pFFlat}) \(\Rightarrow\) (\ref{PrismaticKunzCor-RegPrism}): By the definition of \(p\)-completely flatness, the Frobenius map \(\map{F}{A/p A}{A/p A}\), which is induced from \(\varphi_A\), is flat.
    Since \(A\) is derived \((p, I)\)-complete and Noetherian local, \(A/p A\) is also a Noetherian local ring.
    By Kunz's theorem, \(A/p A\) is a regular local ring, and thus \((A, I)\) is a regular local prism by \Cref{EquivRegularPrism}.

    (\ref{PrismaticKunzCor-FFlat}) \(\Rightarrow\) (\ref{PrismaticKunzCor-ModIFFlat}): The induced map \(\map{\overline{\varphi}_{A,I}}{A/I}{\varphi_*(A/\varphi_A(I)A)}\) is the base change of the faithfully flat map \(\map{\varphi_A}{A}{\varphi_*A}\) via \(A \to A/I\).
    Thus \(\overline{\varphi}_{A,I}\) is faithfully flat.
    By the above (\ref{PrismaticKunzCor-FFlat}) \(\Leftrightarrow\) (\ref{PrismaticKunzCor-RegPrism}), \((A, I)\) is transversal or crystalline.

    (\ref{PrismaticKunzCor-ModIFFlat}) \(\Rightarrow\) (\ref{PrismaticKunzCor-ModIpFFlat}) \(\Rightarrow\) (\ref{PrismaticKunzCor-IFFlat}): This is a direct consequence of \Cref{TransversalFFlatEquiv}.
\end{proof}

\begin{remark}
    Applying this for \Cref{LogRegularPrism}, the regularity of a complete log regular ring \(R \cong W[|\mcalQ|]/(p-f)\) can be characterized by the faithful flatness of the Frobenius lift \(\map{\varphi}{W[|\mcalQ|]}{W[|\mcalQ|]}\) given by \(\varphi(e^q) = (e^q)^p\).
\end{remark}

\begin{remark}
    In \cite[Theorem 6]{lurie2023Full}, Lurie shows that, for a Noetherian ring \(R\) admitting a regular element \(\pi \in R\) such that \(\pi^p\) divides \(p\), the localization \(R_\mfrakm\) for any maximal ideal \(\mfrakm\) of \(R\) containing \(\pi\) is a regular local ring if and only if the \(p\)-th power map \(R/\pi R \xrightarrow{a \mapsto a^p} R/\pi^p R\) is faithfully flat.
    The proof of ``only if part'', which is the easy but necessary part in his paper, can be deduced from the implication \IMPLIES{(\ref{PrismaticKunzCor-RegPrism})}{(\ref{PrismaticKunzCor-ModIFFlat})} in \Cref{PrismaticKunzCor} and a simple calculation in the last paragraph of his proof.
    So a natural question is whether it is possible to prove the ``if part'' using prismatic Kunz's theorem (\Cref{AnimatedCase}).
\end{remark}

Next, by using this, we give another proof of the fact that for any regular local ring \(R\) of residue characteristic \(p\) and any prime ideal \(\mfrakp\) of \(R\) with \(p \in \mfrakp\) the localization \(R_{\mfrakp}\) is also a regular local ring.
This is a generalization of \cite[Corollary 2.2]{kunz1969Characterizations}, which proves the same statement for regular local rings of characteristic \(p\) by using Kunz's theorem.
Note that, if we show the above \Cref{PrismaticKunzCor} under the second proof, we do not use Serre's regularity criterion and \(p\)-adic Kunz's theorem (\Cref{KunzRegular}) to prove this statement.

\begin{proposition} \label{KunzAnotherProof}
    Let \((R, \mfrakm)\) be a regular local ring of residue characteristic \(p\) and \(\mfrakp\) be a prime ideal of \(R\) such that \(p \in \mfrakp\).
    Then the localization \(R_{\mfrakp}\) is also a regular local ring.
\end{proposition}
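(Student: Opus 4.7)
The plan mirrors Kunz's original characteristic-\(p\) argument: once regularity is encoded as the faithful flatness of a ``Frobenius'', the passage to a localization becomes a routine localize-then-complete manipulation. Here the prismatic Kunz's theorem (\Cref{PrismaticKunzCor}) plays the role of classical Kunz's theorem, applied to the ambient prism furnished by Cohen's structure theorem rather than to \(R\) itself.

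First reduce to the case where \(R\) is complete: regularity of \(R_\mfrakp\) is equivalent to regularity of its \(\mfrakp R_\mfrakp\)-adic completion, and any prime of \(R\) containing \(p\) lifts via the faithfully flat map \(R \to \widehat{R}\) to a prime of \(\widehat{R}\) containing \(p\). Assuming \(R\) complete, Cohen's structure theorem and \Cref{Envelope} furnish a surjective prismatic \(\delta\)-pair \((A, R)\) together with a complete regular local prism \((A, I)\) satisfying \(A/I \cong R\). Since \(R\) is regular, \((A, I)\) is a regular local prism, and \Cref{PrismaticKunzCor} gives that the Frobenius lift \(\map{\varphi_A}{A}{A}\) is faithfully flat.

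Let \(\mfrakq \subset A\) be the preimage of \(\mfrakp\) under \(A \twoheadrightarrow R\). The identity \(\varphi_A(a) = a^p + p\delta(a)\), combined with \(p \in \mfrakq\), immediately yields \(\varphi_A^{-1}(\mfrakq) = \mfrakq\); hence \(\varphi_A\) descends to a local map \(\map{\varphi_\mfrakq}{A_\mfrakq}{A_\mfrakq}\), which is faithfully flat (flat by localization, and faithfully flat since it is a flat local map between Noetherian local rings). Setting \(A' \defeq \widehat{A_\mfrakq}\), the \(\delta\)-structure extends to \(A'\) by continuity, and \((A', IA')\) is a bounded prism: derived \((p, I)\)-completeness holds because \((p, I) \subseteq \mfrakq A_\mfrakq\), and the remaining axioms are inherited from \((A, I)\). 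The completed Frobenius \(\varphi_{A'}\) remains faithfully flat as the completion of a faithfully flat map of Noetherian local rings. Applying \Cref{PrismaticKunzCor} in the reverse direction to \((A', IA')\) now shows that it is a regular local prism; by \Cref{EquivRegularPrism}, the quotient \(A'/IA'\) is a regular local ring. Since completion commutes with quotients by finitely generated ideals in the Noetherian setting, \(A'/IA' \cong \widehat{R_\mfrakp}\), so \(\widehat{R_\mfrakp}\) and hence \(R_\mfrakp\) is regular.

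The main technical check is that \((A', IA')\) is a bounded prism---one has to verify that the \(\delta\)-structure on \(A_\mfrakq\) passes to the completion \(A'\) (using \(\varphi_A(\mfrakq) \subseteq \mfrakq\) and the bound \(\delta(\mfrakq^2) \subseteq \mfrakq\) from \Cref{NoDeltaStructure}) and that \(IA'\) remains invertible of rank one. Both are routine in the Noetherian setting. As the surrounding discussion notes, if \Cref{PrismaticKunzCor} is established via the alternative route that uses only classical Kunz's theorem (\Cref{KunzFrob}), then the same argument can be rerun with \(A/pA\) (and its localization-completion at \(\mfrakq/(p)\)) in place of \(A\), using \Cref{EquivRegularPrism} to transfer regularity between \(A/I\), \(A\), and \(A/pA\); this version avoids both Serre's regularity criterion and \(p\)-adic Kunz's theorem (\Cref{KunzRegular}).
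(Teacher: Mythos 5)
Your proposal is correct, and its skeleton is the paper's: reduce to complete \(R\), write \(R \cong A/I\) with \((A, I)\) a complete regular local prism, use \(p \in \mfrakq\) to get \(\varphi_A^{-1}(\mfrakq) = \mfrakq\) so that the Frobenius lift is compatible with localization at the preimage prime, transfer its faithful flatness (supplied by \Cref{PrismaticKunzCor} for the regular prism \((A, I)\)), and then run \Cref{PrismaticKunzCor} in reverse together with \Cref{EquivRegularPrism}. The genuine divergence is where the criterion is applied: the paper applies it directly to the localized pair \((A_\mfrakp, I_\mfrakp)\), checking the condition on the mod-\(p\) Frobenius of \((A/pA)_\mfrakp\) obtained by base change, whereas you insert the completion \(A' \defeq \widehat{A_\mfrakq}\) and check faithful flatness of \(\varphi_{A'}\) itself. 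Your detour costs the extra Noetherian verifications you flag --- extending the \(\delta\)-structure to \(A'\), and the standard fact that a faithfully flat local map of Noetherian local rings remains faithfully flat after maximal-adic completion --- but it buys derived \((p, I)\)-completeness, so \((A', IA')\) is literally a prism in the sense of \Cref{DefPrism}, whereas derived completeness need not hold for the localization \(A_\mfrakp\) on its own; in that respect your extra step makes the appeal to \Cref{PrismaticKunzCor} cleaner than the paper's shorter route. Two small touch-ups: the bound \(\delta(\Jac(A)^2) \subseteq \Jac(A)\) from \Cref{NoDeltaStructure} is not by itself an adic continuity estimate for \(\delta\); the clean argument is to extend \(\varphi_\mfrakq\) continuously (using \(\varphi(\mfrakq) \subseteq \mfrakq\)), observe that \(A'\) is \(p\)-torsion-free by flatness over the domain \(A_\mfrakq\), and recover \(\delta\) as \((\varphi - (-)^p)/p\). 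And your opening reduction should be phrased as in the paper: what is actually used there is descent of regularity along the faithfully flat local map from \(R_\mfrakp\) to the localization of \(\widehat{R}\) at a prime lying over \(\mfrakp\); the equivalence ``regular if and only if the completion is regular'' is the fact you correctly invoke only at the very end, once \(R\) is complete.
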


\begin{proof}
    Let \(\widehat{R}\) be the \(\mfrakm\)-adically completion of \(R\).
    Then \(\widehat{R}\) is a complete regular local ring of residue characteristic \(p\) by \citeSta{07NY}.
    Since \(R \to \widehat{R}\) is faithfully flat, there exists a prime ideal \(\mfrakq\) of \(\widehat{R}\) such that \(\mfrakq \cap R = \mfrakp\).
    Then we have a map \(R_\mfrakp \to \widehat{R}_\mfrakq\) and this is flat by \cite[Theorem 7.1]{matsumura1986Commutative} and the induced map \(\Spec(\widehat{R}_\mfrakq) \to \Spec(R_\mfrakp)\) is surjective by \cite[Lemma 14.9]{gortz2010Algebraic}.
    Then \(R_\mfrakp \to \widehat{R}_\mfrakq\) is faithfully flat and it suffices to show that \(\widehat{R}_\mfrakq\) is a regular local ring.
    Without loss of generality, we can assume that \(R\) is a complete regular local ring.

    By \Cref{EquivRegularRing}, there exists a complete regular local prism \((A, I)\) such that \(R \cong A/I\).
    We can consider \(\mfrakp\) as a prime ideal of \(A\) which contains \(p\) and \(I\).

    Take an element \(a \in A\) such that \(\varphi(a) \in \mfrakp\).
    Then \(\varphi(a) = a^p + p \delta(a)\) and \(p\) are in \(\mfrakp\) and thus \(a\) is in \(\mfrakp\).
    This shows that \(\varphi(A \setminus \mfrakp) \subset A \setminus \mfrakp\).
    By \cite[Lemma 2.15]{bhatt2022Prismsa}, the localization \(A_\mfrakp\) admits a \(\delta\)-structure which makes the canonical map \(A \to A_\mfrakp\) a map of \(\delta\)-rings.
    In particular, the \(\delta\)-pair \((A_\mfrakp, I_\mfrakp)\) forms an orientable prism.

    By using \Cref{PrismaticKunzCor}, it suffices to show that the Frobenius map \(\map{F}{(A/pA)_\mfrakp}{F_*((A/pA)_\mfrakp)}\) is faithfully flat.
    Since the canonical map \(A \to A_\mfrakp\) is flat and the Frobenius lift \(\map{F}{A/pA}{F_*(A/pA)}\) is faithfully flat by regularity of \((A, I)\) and \Cref{PrismaticKunzCor}, the base change map
    \begin{equation} \label{BasechangeFrobLift}
        (A/pA)_\mfrakp \cong A_\mfrakp \otimes^L_A A/pA \xrightarrow{\id_{A_\mfrakp} \otimes^L F} A_\mfrakp \otimes^L_A F_*(A/pA) \cong (F_*(A/pA))_\mfrakp \cong F_*((A/pA)_\mfrakp)
    \end{equation}
    is again faithfully flat and we finish the proof.
\end{proof}

\appendix
\section{Preliminaries about Animated Rings} \label{AnimatedAppendix}

For convenience, we summarize a few basic knowledge of \emph{animated rings} which is much more than we actually need in the paper.
The main references are \cite{lurie2017Higher,lurie2018Spectral,cesnavicius2024Purity,yaylali2022Notes,khan2023Lectures,bhatt2022Absolute}.
Let \(k\) be a ring and \(\Poly_k\) be the full subcategory of the category of rings whose objects are finite variable polynomial rings over \(k\).
We write \(\mcalS\) as the \emph{\(\infty\)-category of spaces} (also called \emph{\(\infty\)-groupoids \(\Grpd_\infty\)} or \emph{anima \(\Ani\)}) and \(\Spectrum\) as the \emph{\(\infty\)-category of spectra}.

\begin{definition}[Animation of categories; {\cite[\S 5.1]{cesnavicius2024Purity}}] \label{AnimationCategory}
    Let \(\mcalC\) be a cocomplete (1-)category generated under colimits by the full subcategory \(\mcalC^{\mathrm{sfp}}\) of strongly of finite presentation objects.
    For example, \(\mcalC\) can be taken as the category of sets \(\Set\), abelian groups \(\Ab\), and (commutative and unital) \(k\)-algebras \(\CAlg_k\).
    Then the \emph{animation} of \(\mcalC\) is the pair \((\Ani(\mcalC), \mcalC^{\mathrm{sfp}} \hookrightarrow \Ani(\mcalC))\) of an \(\infty\)-category \(\Ani(\mcalC)\) which has sifted colimits and a fully faithful functor \(\mcalC^{\mathrm{sfp}} \hookrightarrow \Ani(\mcalC)\) such that the post composition yields an equivalence
    \begin{equation*}
        \Fun_{\mathrm{sifted}}(\Ani(\mcalC), \mcalA) \xrightarrow{\sim} \Fun(\mcalC^{\mathrm{sfp}}, \mcalA)
    \end{equation*}
    for any \(\infty\)-category \(\mcalA\) which has sifted colimits. Here, \(\Fun_{\mathrm{sifted}}(\Ani(\mcalC), \mcalA)\) is the full subcategory of \(\Fun(\Ani(\mcalC), \mcalA)\) spanned by those functors which preserve sifted colimits (or, equivalently, preserve filtered colimits and geometric realizations by \cite[Corollary 5.5.8.17]{lurie2009Higher}).
    The animation is uniquely determined up to (unique) equivalence and we denote it simply by \(\Ani(\mcalC)\).
    If \(\mcalC^{\mathrm{sfp}}\) is small, \(\Ani(\mcalC)\) is equivalent to the \(\infty\)-category \(\Fun_{\Sigma}((\mcalC^{\mathrm{sfp}})^{\opposite}, \mcalS)\) of functors \((\mcalC^{\mathrm{sfp}})^{\opposite} \to \mcalS\) which takes finite coproducts in \(\mcalC^{\mathrm{sfp}}\) to finite products in \(\mcalS\) and the Yoneda embedding \(\mcalC^{\mathrm{sfp}} \hookrightarrow \Fun_{\Sigma}((\mcalC^{\mathrm{sfp}})^{\opposite}, \mcalS)\).
\end{definition}

\begin{definition}[Animated rings; {\cite[\S 5.1.4]{cesnavicius2024Purity} and \cite[Definition 25.1.1.1]{lurie2018Spectral}}] \label{AnimatedRings}
    The \emph{\(\infty\)-category \(\ACAlg_k\) of animated (commutative) \(k\)-algebras} is the animation \(\Ani(\CAlg_k)\) of the category \(\CAlg_k\) of (commutative and unital) \(k\)-algebras.
    As explained above, \(\ACAlg_k\) is equivalent to the full subcategory \(\Fun_{\Sigma}(\Poly_k^{\opposite}, \mcalS) \subseteq \Fun(\Poly_k^{\opposite}, \mcalS)\) spanned by those functors which preserve finite products.
    We refer to the objects as \emph{animated (commutative) \(k\)-algebras}.
    If \(k = \setZ\), we denote \(\ACAlg_\setZ\) by \(\ACAlg\) and call it the \emph{\(\infty\)-category of animated (commutative) rings} whose objects are \emph{animated (commutative) rings}.
    We omit the term commutative.
\end{definition}

\begin{Notation}
    First, we define the following (\(\infty\)-)categories.
    The first two appear in the paragraph above \cite[Proposition 2.34]{yaylali2022Notes} and the third and fourth appear in \cite[Notation 8.1.4 and Definition/Proposition 8.1.5]{khan2023Lectures}.
    \begin{itemize}
        \item \(\Mod(\Spectrum)_k\) is the \(\infty\)-category of pairs \((A, M)\), where \(A\) is in the \(\infty\)-category \(\CAlg_k(\Spectrum)\) of \(E_\infty\)-\(k\)-algebras and \(M\) is an \(A\)-module.
        \item Set the \(\infty\)-category \(\ACAlgMod_k \defeq \Mod(\Spectrum)_k \times_{\CAlg_k(\Spectrum)} \ACAlg_k\) and let \(\ACAlgMod_k^{cn}\) be the full subcategory of \(\ACAlgMod_k\) consisting of objects \((A, M)\) such that \(M\) is connective.
        \item \(\CAlgMod_k\) is the category of pairs \((A, M)\), where \(A\) is a usual \(k\)-algebra and \(M\) is an \(A\)-module.
        \item \(\AMod_A\) is the fiber of \(\Ani(\CAlgMod_k) \xrightarrow{\pi} \Ani(\CAlg_k) = \ACAlg_k\) over \(A \in \ACAlg_k\).
    \end{itemize}
\end{Notation}

\begin{definition}[Animated modules; {\cite[\S 5.1.7]{cesnavicius2024Purity}}] \label{DefAnimatedModule}
    Let \(A\) be an animated ring.
    The \emph{\(\infty\)-category \(\Mod_A\) of \(A\)-modules} is the (symmetric monoidal) \(\infty\)-category of modules over the underlying \(E_\infty\)-ring of \(A\).
    The \emph{\(\infty\)-category \(\Mod_A^{cn}\) of animated \(A\)-modules} is the (symmetric monoidal) full subcategory of \(\Mod_A\) spanned by the connective modules over the underlying \(E_\infty\)-ring of \(A\).
    By using the above notations, these \(\infty\)-categories are defined as follows:
    \begin{align*}
        \Mod_A & \defeq \fib(\ACAlgMod_\setZ \to \{A\} \subseteq \ACAlg_\setZ), \\
        \Mod_A^{cn} & \defeq \fib(\ACAlgMod_\setZ^{cn} \to \{A\} \subseteq \ACAlg_\setZ).
    \end{align*}
    We call the objects in \(\Mod_A^{cn}\) as \emph{animated \(A\)-modules}.
\end{definition}

Animated modules have the following properties and another construction.

\begin{lemma} \label{AnimatedModule}
    Let \(A\) be an animated \(k\)-algebra.
    Let \(\mscrC\) be the full subcategory of \(\ACAlgMod_k^{cn}\) consisting of pairs \((A, M)\) where \(A \in \Poly_k\) and \(M\) is a finite free \(A\)-module.
    \begin{enumerate}
        \item The objects of \(\mscrC\) form compact projective generators of \(\ACAlgMod_k^{cn}\) by \cite[Proposition 25.2.1.2]{lurie2018Spectral}
        That is, \(\mscrC \hookrightarrow \ACAlgMod_k^{cn}\) extends to an equivalence \(\Fun_\Sigma(\mscrC^{op}, \mcalS) \cong \ACAlgMod_k^{cn}\).
        \item In particular, we have an equivalence of \(\infty\)-categories \(\ACAlgMod_k^{cn} \simeq \Ani(\CAlgMod_k)\) (see the paragraph above \cite[Proposition 2.34]{yaylali2022Notes}).
        \item The \(\infty\)-categories \(\AMod_A\) and \(\Mod_A^{cn}\) are equivalence.
        \item The \(\infty\)-category \(\Mod_A\) is a stable \(\infty\)-category.
    \end{enumerate}
\end{lemma}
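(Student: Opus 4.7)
The plan is essentially a citation chase, since each of the four items is a direct consequence of results already available in the literature; the task is mainly to identify compact projective generators correctly and then match up fibers on both sides of the resulting equivalence.

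First I would verify (1) by unwinding the definitions: an object \((B, M)\) in \(\mscrC\) has \(B = k[x_1, \dots, x_n]\) and \(M = B^{\oplus r}\), and such objects are precisely the strongly of finite presentation objects of \(\CAlgMod_k\) whose images in \(\ACAlgMod_k^{cn}\) are compact projective. The claim that they form a set of compact projective generators is exactly \cite[Proposition 25.2.1.2]{lurie2018Spectral} together with the more explicit discussion in \cite[Proposition 2.34]{yaylali2022Notes}. Once this is in place, the universal property of presentable \(\infty\)-categories generated under sifted colimits by compact projectives (cf.\ \cite[Definition 5.5.8.8]{lurie2009Higher}) yields the equivalence with \(\Fun_\Sigma(\mscrC^{\opposite}, \mcalS)\).

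For (2), I would apply the animation recipe of \Cref{AnimationCategory} to \(\CAlgMod_k\): the full subcategory \((\CAlgMod_k)^{\mathrm{sfp}}\) of strongly of finite presentation objects is precisely \(\mscrC\) (up to equivalence), so \(\Ani(\CAlgMod_k) \simeq \Fun_\Sigma(\mscrC^{\opposite}, \mcalS)\), which by (1) is equivalent to \(\ACAlgMod_k^{cn}\). For (3), note that both \(\AMod_A\) and \(\Mod_A^{cn}\) are defined as the fiber of a projection down to \(\ACAlg_k = \Ani(\CAlg_k)\) over the object \(A\); since the projection \(\ACAlgMod_k^{cn} \to \ACAlg_k\) agrees with \(\Ani(\CAlgMod_k) \to \Ani(\CAlg_k)\) under the equivalence of (2) (both being induced by the forgetful functor \((A, M) \mapsto A\) on sifted-colimit-generating subcategories), taking fibers over \(A\) gives the desired equivalence \(\AMod_A \simeq \Mod_A^{cn}\). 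Finally, (4) is a direct invocation of \cite[Corollary 7.1.1.5]{lurie2017Higher}, which says that \(\Mod_R\) is stable for any \(E_\infty\)-ring \(R\), applied to the underlying \(E_\infty\)-ring of \(A\).

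The main obstacle, such as it is, will be the bookkeeping step in (3): one must check that under the equivalence \(\ACAlgMod_k^{cn} \simeq \Ani(\CAlgMod_k)\), the projection to the ring component is compatible with the projection \(\Ani(\CAlgMod_k) \to \Ani(\CAlg_k)\) coming from functoriality of \(\Ani(-)\). This follows because the forgetful functor \(\mscrC \to \Poly_k\) sends compact projective generators to compact projective generators and commutes with sifted colimits, so the induced functors on the animations agree. No new mathematical input is required.
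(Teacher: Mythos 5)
Your proposal is correct and follows essentially the same route as the paper, which offers no argument beyond the very citations you chase (Lurie SAG 25.2.1.2, Yaylali 2.34, the animation universal property, and Lurie HA 7.1.1.5); your extra bookkeeping in (3), checking that the projection to the ring component is compatible with the equivalence because both forgetful functors preserve sifted colimits and agree on the generating subcategory, is exactly the intended justification.
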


The notions of \emph{derived completeness} and \emph{derived completion} for modules over a \emph{discrete} ring are introduced in \Cref{DefDerivedComplete}.
However, we need to generalize them to modules over an animated ring for our purpose.
The main reference is \cite[\S 7.1, \S 7.2, \S 7.3]{lurie2018Spectral}.

\begin{definition}[Derived \(I\)-complete modules] \label{DefDerivedIComplete}
    Let \(A\) be an animated ring and let \(I\) be a finitely generated ideal of \(\pi_0(A)\).
    Let \(M\) be an \(A\)-module.
    \begin{itemize}
        \item \(M\) is \emph{\(I\)-nilpotent} if the derived tensor product \(A[1/x] \otimes^L_A M\) vanishes for each \(x \in I\), or equivalently, the action of \(x \in I\) on \(\pi_i(M)\) is locally nilpotent.\footnote{The action of \(x \in I\) on a discrete \(\pi_0(A)\)-module \(N\) is \emph{locally nilpotent} if for each \(m \in N\), there exists \(k \geq 1\) such that \(x^k m = 0\).}
        \item \(M\) is \emph{\(I\)-local} if the mapping space \(\Map_A(N, M)\) is contractible for every \(I\)-nilpotent object \(N \in \Mod_A\).
        \item \(M\) is \emph{derived \(I\)-complete} if the mapping space \(\Map_A(N, M)\) is contractible for every \(I\)-local object \(N \in \Mod_A\).
        \item \(M\) is derived \(I\)-complete if and only if \(\pi_i(M)\) is a derived \(I\)-complete \(\pi_0(A)\)-module for all \(i \in \setZ\) in the sense of \Cref{DefDerivedComplete} (See also \citeSta{091N}).
        \item The \emph{\(\infty\)-category \(\Mod_A^{\wedge_I}\) of derived \(I\)-complete \(A\)-modules} is the full subcategory of \(\Mod_A\) spanned by derived \(I\)-complete \(A\)-modules. We often write \(\Mod_A^{\wedge_I}\) as \(\Mod_A^\wedge\) if \(I\) is clear from the context.
    \end{itemize}
\end{definition}

\begin{definition}[Derived \(I\)-completion; {\cite[Notation 7.3.1.5 and Proposition 7.3.4.4]{lurie2018Spectral}}] \label{DefDerivedCompletion}
    Let \(A\) be an animated ring and let \(I\) be a finitely generated ideal of \(\pi_0(A)\).
    The \emph{derived \(I\)-completion functor \(\widehat{(-)}^I\)} is the left adjoint of the inclusion \(\Mod_A^{\wedge_I} \hookrightarrow \Mod_A\).
    We often write \(\widehat{(-)}^I\) as \(\widehat{(-)}\) if \(I\) is clear from the context.
    This functor has the following properties.
    \begin{itemize}
        \item This functor is right \(t\)-exact.
        \item If \(A\) is discrete, this coincides with the usual derived \(I\)-completion functor defined in \Cref{DefDerivedComplete} because the functor is also the left adjoint of the inclusion \(\Mod_A^{\wedge_I} \hookrightarrow \Mod_A\) by \citeSta{091V}.
    \end{itemize}
\end{definition}

We freely use the following properties of the derived \(I\)-completion functor.

\begin{lemma}[{\cite[Corollary 7.3.3.6]{lurie2018Spectral}}] \label{DerivedCompletionCompatible}
    Let \(A \to A'\) be a map of animated rings and let \(I\) be a finitely generated ideal of \(\pi_0(A)\).
    Set a finitely generated ideal \(I' \defeq I\pi_0(A')\) in \(\pi_0(A')\).
    \begin{enumerate}
        \item An \(A'\)-module \(M\) is derived \(I'\)-complete if and only if \(M\) is derived \(I\)-complete as an \(A\)-module.
        \item The derived \(I'\)-completion functor \(\widehat{(-)}^{I'}\) on \(\Mod_{A'}\) is equivalent to the restriction of the derived \(I\)-completion functor \(\widehat{(-)}^{I}\) on \(\Mod_A\) to \(\Mod_{A'}\), that is, \(\widehat{M}^{I'} \cong \widehat{M}^I\) in \(\Mod_A\) for any \(M \in \Mod_{A'}\).
    \end{enumerate}
    In particular, for any \(A\)-module \(M\), the derived \(p\)-completion \(\widehat{M}^p\) of \(M\) in \(\Mod_A\) is isomorphic to \(R\lim_n (M \otimes^L_\setZ \Kos(\setZ; p^n))\) as \(A\)-module since the canonical \(A\)-module map between them induces an isomorphism in \(\Mod_\setZ = \mcalD(\setZ)\).
    So we do not distinguish them.
\end{lemma}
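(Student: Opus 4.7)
The plan is to reduce both assertions to the discrete case, where derived completeness admits an explicit Koszul-complex description, and then promote the comparison back up using the homotopy-group criterion recalled right after Definition \ref{DefDerivedIComplete}.

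For (1), fix generators $I = (f_1, \dots, f_r)$ with $f_i \in \pi_0(A)$ and denote their images in $\pi_0(A')$ by $f_i'$, so that $I' = (f_1', \dots, f_r')$. By the criterion that an object of $\Mod_A$ is derived $I$-complete if and only if each $\pi_i(M)$ is derived $I$-complete as a discrete $\pi_0(A)$-module (and similarly over $A'$), it suffices to treat the case where $M = N$ is a discrete $\pi_0(A')$-module. In that case, multiplication by $f_i$ via the map $\pi_0(A) \to \pi_0(A')$ equals multiplication by $f_i'$, so the Koszul complexes $\Kos(\pi_0(A); f_1^n, \dots, f_r^n) \otimes^L_{\pi_0(A)} N$ and $\Kos(\pi_0(A'); (f_1')^n, \dots, (f_r')^n) \otimes^L_{\pi_0(A')} N$ are canonically quasi-isomorphic, since both are built as iterated fibers of the same multiplication maps on $N$. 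Taking $R\lim_n$, the derived $I$-completion of $N$ computed over $\pi_0(A)$ agrees with the derived $I'$-completion computed over $\pi_0(A')$; in particular, one is an isomorphism onto $N$ iff the other is. Assembling over all $i$ proves (1).

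For (2), let $M \in \Mod_{A'}$. The functoriality of the completion functor (as a left adjoint, or directly via its Koszul-complex formula) endows $\widehat{M}^I \in \Mod_A$ with a natural $A'$-module structure extending that of $M$, so that the unit $M \to \widehat{M}^I$ is $A'$-linear; concretely, the $A'$-action on $M$ commutes with each multiplication $f_i \cdot (-)$ that appears in the Koszul construction, and therefore descends to the limit. By (1), $\widehat{M}^I$ is then derived $I'$-complete as an $A'$-module, so the universal property of $\widehat{(-)}^{I'}$ yields a factorization $\widehat{M}^{I'} \to \widehat{M}^I$. Conversely, $\widehat{M}^{I'}$ is derived $I$-complete as an $A$-module by (1), so the universal property of $\widehat{(-)}^{I}$ over $A$ supplies the inverse $\widehat{M}^I \to \widehat{M}^{I'}$. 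Both composites are identities by the uniqueness clauses of the adjunctions, giving the equivalence of functors. The last assertion (identification of the $p$-completion with $R\lim_n (M \otimes^L_\setZ \Kos(\setZ; p^n))$) is the specialization of (2) to the structure map $\setZ \to A$, combined with the explicit Koszul formula over $\setZ$ recalled in Definition \ref{DefDerivedComplete}.

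The main subtlety I anticipate is the bookkeeping in (2): making precise the claim that $\widehat{M}^I$ carries a canonical $A'$-module structure compatible with the unit. The cleanest way to dispatch this is to note that derived completion is a smashing (in particular, symmetric monoidal) Bousfield localization of $\Mod_A$, so it is compatible with base change along $A \to A'$ on the nose, which forces the adjunctions over $A$ and $A'$ to be intertwined in exactly the way needed. With that in hand, every step above is either a transport of structure or an instance of the Koszul identity, and no further input is required.
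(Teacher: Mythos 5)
The paper gives no internal argument for this lemma---it simply quotes \cite[Corollary 7.3.3.6]{lurie2018Spectral}---so your proof has to stand on its own. Part (1) does: reducing to discrete homotopy modules via the criterion recorded in \Cref{DefDerivedIComplete} and identifying \(\Kos(\pi_0(A);f_1^n,\dots,f_r^n)\otimes_{\pi_0(A)}N\) with \(\Kos(\pi_0(A');(f_1')^n,\dots,(f_r')^n)\otimes_{\pi_0(A')}N\) for a discrete \(\pi_0(A')\)-module \(N\) is exactly the standard proof that derived completeness depends only on the underlying complex together with the operators \(f_i\), and it is consistent with the paper's \Cref{DefDerivedComplete}.

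Part (2) has a genuine gap: derived \(I\)-completion is \emph{not} a smashing localization, so the one-line justification of the key compatibility collapses. For instance, over \(\setZ\) the derived \(p\)-completion of \(\setZ[1/p]\) vanishes, while \(\setZ[1/p]\otimes_{\setZ}\setZ_p\neq 0\); equivalently, completion does not commute with infinite direct sums. The compatibility you want is nevertheless true, but for a different reason: the kernel of \(\widehat{(-)}^{I}\) is exactly the class of \(I\)-local objects, which is a tensor ideal of \(\Mod_A\) (it is the \(I\)-\emph{torsion} functor, not the completion, that is smashing); hence completion equivalences are stable under tensoring, the localization \(\Mod_A\to\Mod_A^{\wedge_I}\) is symmetric monoidal, and \(\widehat{M}^{I}\) acquires an \(A'\)-structure with \(A'\)-linear unit. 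Even granting that, your two-sided universal-property argument is incomplete: the comparison map \(\widehat{M}^{I}\to\widehat{M}^{I'}\) produced by the universal property over \(A\) is a priori only \(A\)-linear, so the uniqueness clause of the \(A'\)-adjunction cannot be applied to the composite \(\widehat{M}^{I'}\to\widehat{M}^{I}\to\widehat{M}^{I'}\). A cleaner repair avoids transporting module structures altogether: prove, exactly as in your (1), that an \(A'\)-module is \(I'\)-local if and only if its underlying \(A\)-module is \(I\)-local; then for \(M\in\Mod_{A'}\) the unit \(M\to\widehat{M}^{I'}\) has \(I\)-complete target (by (1)) and \(I\)-local cofiber, hence already satisfies the universal property of the \(I\)-completion in \(\Mod_A\), which is (2). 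Your deduction of the final Koszul formula from (2) applied to \(\setZ\to A\), using conservativity of the forgetful functor to \(\mcalD(\setZ)\), is fine.
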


\begin{lemma} \label{DerivedCompletionConnectedComponent}
    Let \(A\) be an animated ring and let \(I\) be a finitely generated ideal of \(\pi_0(A)\).
    Let \(M\) be an animated \(A\)-module such that the derived \(I\)-completion \(\widehat{\pi_0(M)}\) of \(\pi_0(M)\) is discrete.
    Then we have
    \begin{equation*}
        \pi_0(\widehat{M}) \cong \widehat{\pi_0(M)}.
    \end{equation*}
    In particular, if \(I = (p)\) and \(\pi_0(M)\) has bounded \(p^\infty\)-torsion, this holds. 
\end{lemma}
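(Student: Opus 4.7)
The plan is to apply the derived \(I\)-completion functor to the Postnikov cofiber sequence
\(\tau_{\geq 1} M \to M \to \pi_0(M)\)
in \(\Mod_A\), and then read off \(\pi_0\) from the resulting long exact sequence of homotopy groups. Since derived \(I\)-completion \(\widehat{(-)}\) is a left adjoint by \Cref{DefDerivedCompletion}, and \(\Mod_A\) is stable by \Cref{AnimatedModule}~(4), the functor preserves cofiber sequences and commutes with the suspension and loop functors. This produces a cofiber sequence
\(\widehat{\tau_{\geq 1} M} \to \widehat{M} \to \widehat{\pi_0(M)}\)
in \(\Mod_A\).

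The key step is then to verify that \(\widehat{\tau_{\geq 1} M}\) is \(1\)-connective, i.e., has vanishing \(\pi_i\) for \(i \leq 0\). For this I would observe that \(\tau_{\geq 1} M[-1]\) is connective, so the right \(t\)-exactness of \(\widehat{(-)}\) recorded in \Cref{DefDerivedCompletion} implies that \(\widehat{\tau_{\geq 1} M[-1]}\) is connective; commuting completion past the shift then yields \(\widehat{\tau_{\geq 1} M} \simeq \widehat{\tau_{\geq 1} M[-1]}[1]\), whose homotopy vanishes in degrees \(\leq 0\). Feeding this into the long exact sequence for the cofiber sequence above produces an isomorphism \(\pi_0(\widehat{M}) \xrightarrow{\sim} \pi_0(\widehat{\pi_0(M)})\).

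To finish, I would invoke \Cref{DerivedCompletionCompatible}, which identifies the derived \(I\)-completion of the discrete \(\pi_0(A)\)-module \(\pi_0(M)\) taken in \(\Mod_A\) with its derived \(I\)-completion taken in \(\Mod_{\pi_0(A)}\); by hypothesis the latter is already discrete, so \(\pi_0(\widehat{\pi_0(M)}) = \widehat{\pi_0(M)}\), which combined with the displayed isomorphism gives the claim. For the ``in particular'' clause with \(I = (p)\), the standard Milnor-sequence computation of the derived \(p\)-completion of a discrete abelian group (compare the argument surrounding \cite[Lemma III.2.4]{bhatt2018Geometric}) shows that bounded \(p^\infty\)-torsion forces the \(R^1\lim\) and Tate-module contributions to vanish, so that \(\widehat{\pi_0(M)}\) is discrete and coincides with the classical \(p\)-adic completion, hence the hypothesis of the main statement is satisfied. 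I do not expect a substantial obstacle; the only points that require care are the commutation of the left adjoint \(\widehat{(-)}\) with the shift \([-1]\) and the correct use of \Cref{DerivedCompletionCompatible} to avoid any ambiguity about the ambient category in which the completion is computed.
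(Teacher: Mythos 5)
Your proposal is correct and follows essentially the same route as the paper: complete the Postnikov fiber sequence \(\tau_{\geq 1}M \to M \to \pi_0(M)\), use right \(t\)-exactness (plus exactness of the localization, hence commutation with shifts) to see that \(\widehat{\tau_{\geq 1}M}\) is \(1\)-connective, read off \(\pi_0(\widehat{M}) \cong \pi_0(\widehat{\pi_0(M)})\) from the long exact sequence, and conclude via the discreteness hypothesis, with the bounded \(p^\infty\)-torsion case handled exactly as in the paper through \Cref{DerivedCompletionCompatible} and the classical computation of derived \(p\)-completion. Your extra care about the ambient category for the completion of \(\pi_0(M)\) is a sensible elaboration of a point the paper leaves implicit, not a divergence.
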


\begin{proof}
    We show this statement by using the same argument as in \cite[Corollary 7.3.6.6]{lurie2018Spectral}.
    Taking a fiber sequence \(\tau_{\geq 1}M \to M \to \pi_0(M)\) in \(\Mod_A\), we have a fiber sequence \(\widehat{\tau_{\geq 1}M} \to \widehat{M} \to \widehat{\pi_0(M)}\) in \(\Mod_A^\wedge\).
    Since the derived \(I\)-completion functor \(\widehat{(-)}\) is right \(t\)-exact, \(\widehat{\tau_{\geq 1}M}\) is in \((\Mod_A)_{\geq 1}\).
    So the long exact sequence shows that \(\pi_0(\widehat{M}) \xrightarrow{\cong} \pi_0(\widehat{\pi_0(M)})\) is an isomorphism of \(\pi_0(A)\)-modules.
    Since \(\widehat{\pi_0(M)}\) is discrete in our assumption, \(\pi_0(\widehat{M}) \cong \widehat{\pi_0(M)}\) as desired.

    If \(\pi_0(M)\) has bounded \(p^\infty\)-torsion, \(\widehat{\pi_0(M)}\) is discrete and coincides with the \(p\)-adic completion of \(\pi_0(M)\).
    So \(M\) satisfies the assumption of the above argument.
\end{proof}

\begin{definition}[Derived \(\infty\)-categories] \label{RemarkDerivedInfCat}
    Let \(A\) be an animated ring and let \(I\) be a finitely generated ideal of \(\pi_0(A)\).
    The \emph{derived \(\infty\)-category \(\mcalD(A)\) of \(A\)} is nothing but the (symmetric monoidal) \(\infty\)-category \(\Mod_A\) of \(A\)-modules.
    The \emph{\(I\)-completed derived \(\infty\)-category \(\mcalD_{I-comp}(A)\) (or \(\widehat{\mcalD}(A)\))} is nothing but the full subcategory \(\Mod_A^{\wedge_I}\) of \(\mcalD(A)\) spanned by the derived \(I\)-complete \(A\)-modules defined in \Cref{DefDerivedIComplete}.

    In particular, if \(A\) is discrete, \(\Mod_A\) is equivalent to that the derived \(\infty\)-category \(\mcalD(A)\) of \(A\) and \(\Mod_A^{cn}\) is equivalent to \(\mcalD^{\leq 0}(A)\) in cohomological notation (as symmetric monoidal \(\infty\)-categories; see \cite[Proposition 7.1.2.13]{lurie2017Higher}).
\end{definition}

\begin{definition}[Some properties of animated modules; {\cite[\S 7.2.2]{lurie2017Higher}}] \label{PropOfAnimatedModule}
    Let \(A\) be an animated \(k\)-algebra (or more simply, an \(E_\infty\)-ring) and let \(M\) be an animated \(A\)-module.
    \begin{enumerate}
        \item \(M\) is \emph{free} if it is equivalent to a coproduct of copies of \(A\).
        \item \(M\) is \emph{projective} if it is a projective object of the \(\infty\)-category \(\Mod_A^{cn}\).
        \item \(M\) is \emph{(faithfully) flat} if \(\pi_0(M)\) is a (faithfully) flat \(\pi_0(A)\)-module and the natural map \(\pi_0(M) \otimes_{\pi_0(A)} \pi_i(A) \to \pi_i(M)\) is an isomorphism of abelian groups for each \(i \in \setZ\).
        \item For a finitely generated ideal \(I \subseteq \pi_0(A)\), \(M\) is \emph{\(I\)-completely (faithfully) flat} if the base change \(M \otimes^L_A \pi_0(A) \in \Mod_{\pi_0(A)} = \mcalD(\pi_0(A))\) is \(I\)-completely (faithfully) flat over \(\pi_0(A)\) in the sense of \Cref{DefofCompFlat}.
    \end{enumerate}
\end{definition}

\begin{lemma}[{\cite[\S 7.2.2]{lurie2017Higher}}] \label{PropertiesModule}

    Let \(A\) be an animated \(k\)-algebra and let \(M\) and \(N\) be animated \(A\)-modules.
    We use the following facts:
    \begin{enumerate}
        \item If \(M\) is free, then \(M\) is projective.
        \item If \(M\) is projective, then \(M\) is flat.
        \item If \(M\) is projective, any surjective map of animated \(A\)-modules \(N \to M\) has a right inverse (up to homotopy).
    \end{enumerate}
\end{lemma}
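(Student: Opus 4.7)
The plan is to derive each of the three items directly from the theory of projective and flat modules over $E_\infty$-rings in \cite[\S 7.2]{lurie2017Higher}, transferred to animated $k$-algebras through the equivalence $\Mod_A^{cn} \simeq \AMod_A$ recorded in \Cref{AnimatedModule}.

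For (1), I would first verify that $A$ itself is projective in $\Mod_A^{cn}$: the functor $\Map_{\Mod_A^{cn}}(A,-)$ is corepresented by $A$ and naturally identifies with the underlying space functor $\Omega^\infty$, which preserves sifted colimits and in particular commutes with geometric realizations of simplicial objects, the essential closure property defining projective objects in \cite[Definition 7.2.2.4]{lurie2017Higher}. The claim then follows from the general fact that arbitrary coproducts of projective objects in a pointed $\infty$-category with coproducts are again projective, applied to the coproduct of copies of $A$.

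For (2), the key input is Lurie's characterization (Proposition 7.2.2.6) of projective connective $A$-modules as retracts of free modules. It then suffices to verify that free modules are flat and that flatness is preserved under retracts. For a free module $F = \bigoplus_{i \in I} A$, both conditions in the definition of flatness (\Cref{PropOfAnimatedModule}) are immediate: $\pi_0(F) = \bigoplus_{i \in I} \pi_0(A)$ is a free $\pi_0(A)$-module, and the natural map $\pi_0(F) \otimes_{\pi_0(A)} \pi_i(A) \to \pi_i(F)$ is an isomorphism because tensor product and homotopy groups both commute with direct sums of connective modules. Retract-stability is formal: a retract in $\Mod_A^{cn}$ induces retracts of the relevant $\pi_0(A)$-modules.

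For (3), I would read the statement as asserting that any effective epimorphism $N \twoheadrightarrow M$ onto a projective $M$ admits a section up to homotopy, which is immediate from the defining lifting property: the induced map on $\pi_0$-level mapping spaces $\pi_0\Map_A(M, N) \to \pi_0\Map_A(M, M)$ is surjective, so $\id_M$ admits a preimage, i.e., a section. The main obstacle is notational rather than mathematical: the lemma as stated is silent about a surjectivity hypothesis, which should be made explicit (and is indeed present in every application in the paper, such as the cofiber-sequence quotient $q_j \colon \Fil_j^{HT} \to \gr_j^{HT}$ in the proof of \Cref{AnimatedLCIPrism}, where $q_j$ is by construction an effective epimorphism in $\Mod_{R^{an}}^{cn}$).
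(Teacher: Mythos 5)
Your proposal is correct and takes essentially the same route as the paper, which offers no argument beyond citing \cite[Propositions 7.2.2.6, 7.2.2.7 and Lemma 7.2.2.14]{lurie2017Higher}; your three arguments simply unpack those references (for (1) and (3) literally reproducing the proof of Proposition 7.2.2.7, for (2) via the retract-of-free characterization). You are also right that item (3) is only true for maps \(N \to M\) that are surjective on \(\pi_0\) (effective epimorphisms) — the lemma omits this hypothesis, but it holds in every use in the paper, e.g.\ for the cofiber maps \(q_j\colon \Fil_j^{HT} \to \gr_j^{HT}\) in the proof of \Cref{AnimatedLCIPrism}.
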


\begin{definition} \label{ExteriorPowerCotangentComplex}
    Let \(\mscrC\) be the full subcategory of \(\ACAlgMod_k^{cn}\) spanned by those pairs \((A, M)\) where \(A \in \Poly_k\) and \(M \cong A^n\) for some \(n \geq 0\).
    For any \(A \in \Poly_k\), the module of K\"ahler differential \(\Omega^1_{A/k}\) of \(A\) over \(k\) defines a functor
    \begin{align*}
        \Omega^i_{-/k} \colon \Poly_k & \rightarrow \mscrC \subseteq \ACAlgMod_k^{cn} \\
        A & \mapsto (A, \Omega^i_{A/k} \defeq \bigwedge^i_A \Omega^1_{A/k})
    \end{align*}
    for each \(i \geq 0\) since \(\Omega^i_{A/k}\) is a free \(A\)-module.
    By \cite[Construction B.1]{bhatt2022Absolute}, \(\Omega^i_{-/k}\) admits a unique extension \(\map{L\Omega^i_{-/k}}{\ACAlg_k}{\ACAlgMod_k^{cn}}\).
    We refer to \(L\Omega^1_{A/k}\) as the \emph{cotangent complex of \(A\) over \(k\)} for any animated \(k\)-algebra \(A\) and denote it by \(L_{A/k}\).
    
    By \cite[Construction 25.2.2.2]{lurie2018Spectral},
    the endofunctor \(\bigwedge^i \colon (A, M) \mapsto (A, \bigwedge^i_A M)\) on \(\mscrC\) has a unique extension \(\map{L\bigwedge^i}{\ACAlgMod_k^{cn}}{\ACAlgMod_k^{cn}}\).
    We refer to \(\bigwedge^i_A(M) \defeq L\bigwedge^i(A, M)\) for \((A, M) \in \ACAlgMod_k^{cn}\) as the \emph{derived \(i\)-th exterior power of \(M\) over \(A\)} for each \(i \geq 0\).
    Even if \(A\) is a discrete ring and \(M\) is a discrete \(A\)-module, the derived \(i\)-th exterior power \(\bigwedge^i_A(M)\) is not necessarily discrete and thus is not equivalent to the usual exterior power of \(M\) over \(A\).
    However, if we take the connected component \(\pi_0(\bigwedge^i_A(M))\), then these are isomorphic (see \cite[Warning 25.2.3.5]{lurie2018Spectral}).

    Then we have two functors \(L\Omega^i_{-/k}\) and \((L\bigwedge^i) \circ (L\Omega^1_{-/k})\) from \(\ACAlg_k\) to \(\ACAlgMod_k^{cn}\) which preserve sifted colimits and compatible on \(\Poly_k (\subseteq \ACAlg_k)\).
    Then \cite[Proposition 25.1.1.5 (1)]{lurie2018Spectral} shows that these two functors are naturally equivalent, that is, \(L\Omega^i_{A/k} \cong \bigwedge^i_A(L\Omega^1_{A/k})\) as animated \(A\)-modules for any animated \(k\)-algebra \(A\).
\end{definition}

\begin{remark}[{cf. \cite[Remark 2.51]{yaylali2022Notes}}] \label{EquivCotangentComplex}
    In \cite[Construction 25.3.1.6 and Notation 25.3.2.1]{lurie2018Spectral}, we have an \emph{algebraic cotangent complex \(L^{\alg}_{S/R}\) of \(S\) over \(R\)} for any map of animated rings \(R \to S\) by using some universal property.
    If \(R\) is discrete, the cotangent complex \(L_{S/R}\) defined in \Cref{ExteriorPowerCotangentComplex} is isomorphic to the algebraic cotangent complex \(L^{\alg}_{S/R}\) as an animated \(S\)-module since \(L_{S/R}\) satisfies the same universal property of \(L^{\alg}_{S/R}\) by a direct calculation \cite[Proposition 2.50]{yaylali2022Notes}.
\end{remark}

\end{document}